\def\imod#1{\allowbreak\mkern10mu({\operator@font mod}\,\,#1)}
\theoremstyle{plain}
\newtheorem{theorem}{Theorem}
\newtheorem{lemma}{Lemma}
\newtheorem*{lemma*}{Lemma}
\newtheorem{proposition}{Proposition}
\newtheorem*{proposition*}{Proposition}
\newtheorem{observation}{Observation}
\newtheorem*{observation*}{Observation}
\newtheorem{fact}{Fact}
\newtheorem*{fact*}{Fact}
\theoremstyle{definition}
\newtheorem{assumption}{Assumption}
\theoremstyle{remark}
\newtheorem{remark}{Remark}
\newtheorem*{remark*}{Remark}
\newtheorem*{example*}{Example}
\definecolor{gold}{rgb}{0.85,0.65,0}
\newcommand{\be}{\begin{eqnarray}}
\newcommand{\ee}[1]{\label{#1}\end{eqnarray}}
\newcommand{\ese}{\end{eqnarray*}}
\newcommand{\bse}{\begin{eqnarray*}}
\def\beq{\begin{equation}}
\def\eeq{\end{equation}}
\def\fnote#1{\footnote}
\newcommand{\epr}{\hfill\hbox{\hskip 4pt \vrule width 5pt height 6pt depth 1.5pt}\vspace{0.0cm}\par}
\def\ra{\rangle}
\def\la{\langle}
\newcommand{\grad}{\ensuremath{\nabla}}
\def\E{{\mathbb{E}}}
\def\N{{\mathbb{N}}}
\def\R{{\mathbb{R}}}
\def\cZ{{\cal Z}}
\newcommand{\CZ}{\mathcal{Z}}
\DeclareMathOperator{\Opt}{Opt}
\DeclareMathOperator*{\argmin}{arg\,min}
\DeclareMathOperator*{\argmax}{arg\,max}
\DeclareMathOperator{\SadVal}{SV}
\DeclareMathOperator{\SV}{SV}
\DeclareMathOperator{\sad}{sad}
\DeclareMathOperator{\Prox}{Prox}
\def\epsilonsad{\epsilon_{\sad}}
\newcommand{\dgf}{d.g.f.}
\def\log{\mathop{{\rm log}}}
\begin{document}

\title{Exploiting Problem Structure in Optimization under Uncertainty via Online Convex Optimization}
\author[1]{Nam Ho-Nguyen}
\author[1]{Fatma K{\i}l{\i}n\c{c}-Karzan}
\affil[1]{Tepper School of Business, Carnegie Mellon University, Pittsburgh, PA, 15213, USA.}
\date{August 1, 2016; last revised March 30, 2018}
\maketitle

\begin{abstract}
In this paper, we consider two paradigms that are developed to account for uncertainty in optimization models: robust optimization (RO) and joint estimation-optimization (JEO). We examine recent developments on efficient and scalable iterative first-order methods for these problems, and  show that these iterative methods can be viewed through the lens of online convex optimization (OCO). The standard OCO framework has seen much success for its ability to handle decision-making in dynamic, uncertain, and even adversarial environments. Nevertheless, our applications of interest present further flexibility in OCO via three simple modifications to standard OCO assumptions: we introduce two new concepts of \emph{weighted regret} and \emph{online saddle point problems} and study the possibility of making \emph{lookahead} (anticipatory) decisions. Our analyses demonstrate that these flexibilities introduced into the OCO framework have significant consequences whenever they are applicable. 
For example, in the strongly convex case, minimizing unweighted regret has a proven optimal bound of $O(\log(T)/T)$, whereas we show that a bound of $O(1/T)$ is possible when we consider weighted regret. Similarly, for the smooth case, considering $1$-lookahead decisions results in a $O(1/T)$ bound, compared to $O(1/\sqrt{T})$ in the standard OCO setting. Consequently, these OCO tools are instrumental in exploiting structural properties of functions and results in improved convergence rates for RO and JEO. In certain cases, our results for RO and JEO match the best known or optimal rates in the corresponding problem classes without data uncertainty.

\end{abstract}

\maketitle

\section{Introduction}\label{sec:intro}

We consider the following convex optimization problem with given input data $u = [u^1;\ldots;u^m]$:
\begin{equation}\label{eqn:uncertain-data-problem}
\min_x \left\{ f(x) :~ f^i(x,u^i) \leq 0,\ \forall i =1,\ldots,m,~~ x \in X \right\},
\end{equation}
where $X$ is a convex domain, and $f, f^1,\ldots,f^m$ are all convex functions of $x\in X$. Often, the data $u$ defining the problem~\eqref{eqn:uncertain-data-problem} are uncertain or misspecified (only approximations of the true data are available). In many applications, optimization with noisy data can have a large negative effect on performance. As an example, in portfolio optimization, the covariance matrix is difficult to estimate, and the mean-variance model is notoriously sensitive to data perturbations \cite{GoldfarbIyengar2003}. To address this, several methodologies have been developed to handle the uncertainty or misspecification of data in \eqref{eqn:uncertain-data-problem}. In this paper, we consider iterative solution methods for two different paradigms  with  
tractable models that handle uncertainty, namely,   
robust optimization and joint estimation-optimization problems, and establish a deeper connection between such iterative approaches for these   problems 
and online convex optimization. 

Robust optimization (RO) addresses data uncertainty in \eqref{eqn:uncertain-data-problem} by seeking a solution $x \in X$ that is feasible for all data realizations $u^i$ from a fixed uncertainty set $U^i$ for each constraint $f^i$, $i=1,\ldots,m$. More specifically, convex RO seeks to solve
\begin{equation}\label{eqn:RO-problem}
\min_x \left\{ f(x) : \sup_{u^i \in U^i} f^i(x,u^i) \leq 0,\ i =1,\ldots,m,\quad x \in X \right\}.
\end{equation} 
RO has been extensively studied in the literature, and we refer the reader to the paper by Ben-Tal and Nemirovski \cite{BenTalNem1998}, the book by Ben-Tal et al.\@ \cite{BenTalelGhaouiNemirovski2009} and surveys \cite{BenTalNemirovski2002,BenTalNemirovski2008,BertsimasBrownCaramanis2011,CaramanisMannorXu2011} for a detailed account of RO theory and its numerous applications.

The traditional solution method for RO is based on reformulating it first into an equivalent deterministic robust counterpart problem via duality theory, and then solving the robust counterpart as a deterministic convex optimization problem. However, the robust counterpart approach often leads to larger and much less scalable problems than the associated nominal problem of \eqref{eqn:RO-problem} where the uncertain data (noise) $[u^1;\ldots;u^m]$ is fixed to a given value. For example, it is well-known that the robust counterpart of a second-order cone program with ellipsoidal uncertainty is a semidefinite program. 
Recently, there have been several interesting developments on \emph{iterative methods} for solving RO problems that bypass the robust counterpart approach, see e.g.,  \cite{MutapcicBoyd2009,BenTalHazan2015,Ho-NguyenKK2016RO}.  
These methods solve \eqref{eqn:RO-problem} by iteratively updating the solution $x$ and the noise $[u^1;\ldots;u^m]$ to approximate their optimum  values.

Joint estimation-optimization (JEO) considers the setting where we only have uncertainty $u$ in the objective $f(x,u)$, and that the `correct' data value $u^*$ may be learned through a distinct learning process, i.e., it is characterized as a solution to a separate optimization problem $\min_u \left\{ g(u) : u \in U \right\}$. More precisely, JEO aims to solve
\begin{align}
&\min_x \left\{ f(x, u^*) :~ x \in X \right\}\tag{$\Opt(u^*)$}\label{eqn:JEO-problem}\\
\text{where} \quad & u^* \in \argmin_u \left\{ g(u) :~ u \in U \right\}. \tag{Est}\label{eqn:JEO-estimation-problem}
\end{align}
In many practical situations, JEO is solved via a \emph{sequential} method: first minimize $g(u)$ to find $u^*$, then minimize $f(x,u^*)$ to solve the problem. However, we often cannot solve for $u^*$ exactly, but instead must settle for an approximation $\bar{u} \approx u^*$.
With such a strategy, under mild Lipschitz continuity assumptions, the accuracy of \eqref{eqn:JEO-problem} is controlled by the norm of $\|\bar{u}-u^*\|$. Nevertheless, this creates the following `inconsistency' problem: when minimizing $f(x,\bar{u})$, we create a sequence of points $x_t \in X$, $t \geq 1$, which converge to the minimum of $f(x,\bar{u})$; however, the sequence will not converge to the desired minimum \eqref{eqn:JEO-problem}, and in fact will only be within $O(\|\bar{u} - u^*\|)$ accuracy. That is, this approach cannot provide asymptotically accurate solutions $x_t$. It is possible to achieve consistency via a na\"{i}ve scheme by creating a sequence of approximations $u_t$ such that $\|u_t - u^*\| \to 0$, and for each $u_t$, minimizing $f(x,u_t)$ up to accuracy $O(\|u_t - u^*\|)$ to obtain $x_t$. Then the sequence $x_t$ will be consistent, 
i.e., $\lim_{t\to\infty} f(x_t,u_t)$ converges to the optimum value of \eqref{eqn:JEO-problem}.
This na\"{i}ve scheme comes with two disadvantages: each step $t$ involves solving a complete minimization problem up to some accuracy, and furthermore the accuracy must improve at each new step. The main problem is that at each step $t$, the information from the previous steps cannot be utilized, hence they are essentially wasted. To address this, Jiang and Shanbhag \cite{JiangShanbhag2013,JiangShanbhag2014} and Ahmadi and Shanbhag \cite{AhmadiShanbhag2014} propose a scheme that \emph{jointly} solves the estimation and optimization problems, which we refer to as JEO. With this scheme, they can efficiently generate a sequence of points $x_t$ and $u_t$ such that $f(x_t,u_t)$ will indeed converge to the desired minimum \eqref{eqn:JEO-problem}, and give corresponding non-asymptotic error rates. In particular, their scheme can exploit previous information in a principled manner by ensuring that the effort in each step consists only of first-order updates.

The iterative RO methods of \cite{MutapcicBoyd2009,BenTalHazan2015,Ho-NguyenKK2016RO} and the simultaneous JEO approach of \cite{JiangShanbhag2013,JiangShanbhag2014,AhmadiShanbhag2014} both build a solution $\bar{x}$ in very similar ways: iteratively generate a \emph{solution} sequence $x_t$ and a \emph{data} sequence $u_t$ (for $t \geq 1$) that approximate the `ideal' solution and data points respectively, then perform averaging after a finite number of iterations $T$ to build an approximate solution $\bar{x}$.
A key feature in both approaches is that generating the next \emph{solution} point $x_t$ uses information from the \emph{data} sequence $u_1,\ldots,u_{t-1}$ up to iteration $t-1$, and vice versa. This intricacy is handled via tools from \emph{online convex optimization} (OCO) in the case of RO in \cite{Ho-NguyenKK2016RO}; we will demonstrate later that  the simultaneous approach of JEO can also be viewed through the lens of OCO.

OCO is part of the broader online learning (or sequential prediction) framework, which was introduced as a method to optimize decisions in a dynamic environment where the objective is changing at every time period, and at each time period we are allowed to adapt to our changing environment based on accumulated information. The origin of the online learning model can be traced back to the work of Robbins \cite{Robbins1950} on compound statistical decision problems. This framework has found a diverse set of applications in many fields; for further details see \cite{CesaBianchiLugosi2006,Hazan2016,Shalev-Shwarz2011}.

In standard OCO, we are given a convex domain $X$ and a finite time horizon $T$. In each time period $t=1,\ldots,T$, an online player chooses a decision $x_t\in X$ based on \emph{past} information from time steps $1,\ldots,t-1$ only. Then, a convex loss function $f_t:X \to \R$ is revealed, and the player suffers loss $f_t(x_t)$ and gets some feedback typically in the form of first-order information $\grad f_t(x_t)$. We call this restriction on the player \emph{non-anticipatory}, since the player cannot anticipate the next loss $f_t$ ahead of deciding $x_t$.\footnote{This is also referred to as a \emph{0-lookahead} framework.} 
In addition, it is usually assumed that the functions $f_t$ are set in advance---possibly by an all-powerful adversary that has full knowledge of our learning algorithm---and we know of only the general class of these functions. As such, it is unreasonable to compare the loss of the player across the time horizon to the best possible loss, which would require full knowledge of $f_t$ in advance of choosing $x_t$. Instead, the player's sequence of decisions $x_t$ is evaluated against the best fixed decision in hindsight, and the (average) difference is defined to be the \emph{regret}:
\begin{equation}\label{eqn:regret}
\frac{1}{T} \sum_{t=1}^{T} f_t(x_t) - \inf_{x\in X} \frac{1}{T} \sum_{t=1}^{T} f_t(x).
\end{equation}
The goal in OCO is to design efficient \emph{regret minimizing} algorithms that generate the points $x_t$ so that the regret tends to zero as $T$ increases. Therefore, in OCO we seek non-anticipatory algorithms to choose $x_t$ that guarantee
\[
\frac{1}{T} \sum_{t=1}^{T} f_t(x_t) - \inf_{x\in X} \frac{1}{T} \sum_{t=1}^{T} f_t(x) \leq r(T),\quad \lim_{T \to \infty} r(T) = 0,
\]
and the performance of our algorithms is measured by how quickly $r(T)$  
tends to $0$. While regret may seem like a weak evaluation metric, the fact that regret minimizing algorithms exist for \emph{any} sequence of functions $f_t$ is quite powerful. In particular, it allows us to handle the intricacies of simultaneously generating $x_t$ and $u_t$.

In this paper, we view iterative approaches to both RO and JEO in a \emph{unified} manner through the lens of OCO, and  demonstrate how such a view opens up the possibility of introducing simple flexibilities to standard OCO that are instrumental in exploiting structural properties of the problems and results in improved convergence rates for RO and JEO. 
Our nonstandard yet flexible OCO framework is obtained through three simple modifications to the standard OCO assumptions. These modifications are as follows:
\begin{enumerate}[(i)]
\item We introduce the concept of \emph{weighted regret}, where instead of taking uniform averages with weights $\theta_t={1/T}$ in \eqref{eqn:regret}, we are allowed to use \emph{nonuniform} weighted averages. From a modeling perspective, this allows us to capture situations where decisions $x_t$ at different time steps $t$ have varying importance.

\item We introduce the \emph{online saddle point (SP) problem}, where at each step we receive a convex-concave function $\phi_t(x,y)$ and must choose $x$ and $y$. This is an extension of the well-studied offline convex-concave SP problem, and can be thought of as a dynamic zero-sum two-player game where at each step the players are restricted to make only one move.

\item We explore the implications of \emph{1-lookahead} or \emph{anticipatory} decisions, where the learner can receive \underline{limited information} on the function $f_t$ before making the decision $x_t$. This is in contrast to most OCO settings where the learner must choose $x_t$ before any information on $f_t$ is revealed.

\end{enumerate}

Under this new OCO framework with flexibilities, we present and discuss algorithms accompanied with new regret bounds that can be better than the standard OCO ones when favorable problem structure is present. 
Our algorithms are based on online adaptations of two commonly used offline first-order methods (FOMs) from convex optimization, namely Mirror Descent and Mirror Prox. We present our developments in the flexible proximal setup of Juditsky and Nemirovski \cite{JuditNem2012Pt1,JuditNem2012Pt2} which can be further customized to the geometry of the domains. 

Our analyses demonstrate that these flexibilities introduced into the OCO framework have significant consequences whenever they are applicable. 
For example, in the strongly convex case, minimizing unweighted regret has a proven optimal bound of $O(\log(T)/T)$, whereas we show that a bound of $O(1/T)$ is possible when we consider weighted regret. Similarly, for the smooth case, considering $1$-lookahead decisions results in a $O(1/T)$ bound, compared to $O(1/\sqrt{T})$ in the standard OCO setting (see Remarks~\ref{rem:stronglyConvDiscussion} and \ref{rem:smoothLBdiscuss}).

Consequently, these new regret bounds are pivotal in exploiting structural properties of functions to achieve improved convergence rates for both RO and JEO. 
For example, in the case of RO, we demonstrate that it is possible to achieve a convergence rate of $O(1/T)$, improving over the standard $O(1/\sqrt{T})$ rate, when the functions $f^i$ satisfy certain strong convexity (or smoothness) assumptions.
These new developments then allow us to partially resolve  an open question from \cite{BenTalHazan2015} on the complexity lower bounds for solving RO via iterative techniques. 
For JEO, in addition to covering the standard setups from \cite{AhmadiShanbhag2014} in a unified manner and extending them to  the more general proximal setup, we explore a setting which was not covered in the work of \cite{JiangShanbhag2013,JiangShanbhag2014,AhmadiShanbhag2014}, when $f$ is non-smooth and strongly convex. In this setting, we provide an improved convergence rate of $O(1/T)$, which is the optimal rate even if we had the correct data $u^*$ upfront.

\subsubsection*{Related Work}

For the RO problem \eqref{eqn:RO-problem}, Mutapcic and Boyd \cite{MutapcicBoyd2009} analyzed an iterative cutting-plane-type approach, which has an exponential-in-dimension convergence guarantee of $\left( 1 + O(1/\epsilon) \right)^n$ iterations to obtain an $\epsilon$-optimal solution. Ben-Tal et al. \cite{BenTalHazan2015} suggests an approach using online convex optimization, which guarantees convergence in $O(1/\epsilon^2)$ iterations. Each iteration of \cite{MutapcicBoyd2009} and \cite{BenTalHazan2015} requires solving at least a nominal version of \eqref{eqn:RO-problem}, which can be expensive. The recent work of \cite{Ho-NguyenKK2016RO} provides a unifying framework for both approaches \cite{MutapcicBoyd2009} and \cite{BenTalHazan2015} via OCO, and presents a refined analysis which allows for a significant reduction in the computational effort of each iteration to simple first-order updates only, while enjoying a convergence guarantee of $O(1/\epsilon^2 \log(1/\epsilon))$. This reduction in the per-iteration computational cost in the approach of \cite{Ho-NguyenKK2016RO} is enough to offset the extra $\log(1/\epsilon)$ factor in the overall number of iterations; see \cite[Section 4.4]{Ho-NguyenKK2016RO} for a detailed discussion. In this paper, we examine the OCO-based framework of \cite{Ho-NguyenKK2016RO}, and provide improved convergence results under structural assumptions on the properties of functions $f^i$ for this framework.

Jiang and Shanbhag \cite{JiangShanbhag2013,JiangShanbhag2014} introduced and studied the JEO problem \eqref{eqn:JEO-problem}-\eqref{eqn:JEO-estimation-problem} in a stochastic setting, and Ahmadi and Shanbhag \cite{AhmadiShanbhag2014} examined the deterministic case. In this paper, we consider the deterministic JEO problem, for which \cite{AhmadiShanbhag2014} provided some remarkable convergence results. Specifically, they analyze the setting when $g$ is strongly convex and both $f$ and $g$ are smooth. In \cite[Proposition 3]{AhmadiShanbhag2014}, when $f$ is also strongly convex, a gradient descent-type algorithm is given with error bound of $O(T \beta^T)$ after $T$ iterations, for some $0 < \beta < 1$. In \cite[Proposition 4]{AhmadiShanbhag2014}, when $f$ is only convex, the same algorithm (with different tuning parameters) ensures an error bound of $O(1/T)$. Furthermore, when $f$ does not enjoy strong convexity or smoothness, \cite[Proposition 6]{AhmadiShanbhag2014} provides an error bound of $O(1/\sqrt{T})$. These results demonstrate that, despite access to only estimates of the true data with increasing accuracy, the simultaneous first-order JEO approach of \cite{AhmadiShanbhag2014} can achieve error bounds which are asymptotically as good or almost as good as first-order methods equipped with exact data.
Similar to RO, we show that the JEO problem can be viewed through the lens of OCO, and explore possible improvements through our flexible OCO framework.

To our knowledge, the concept of weighted regret in OCO has not been studied before. However, modification of aggregation weights as a means to speed up convergence has been explored in the stochastic optimization setting under strong convexity assumptions; see \cite{HazanKale2014,LacosteJSchmidtBach2012,NedicLee2014,RakhlinShamirSridharan2012}. Our work can be seen as an extension of these results to the adversarial setting, and in fact, one of our results, Theorem~\ref{thm:OCO-strongly-convex}, is a simple generalization of a result from \cite{LacosteJSchmidtBach2012}. Nevertheless, by stating the result in the general adversarial setting of OCO, we are able to apply it to RO and JEO, which do not fit within the stochastic optimization framework.

Mahdavi et al. \cite{MahdaviJinYang2012} introduce a special case of online SP problems to handle difficult constraints in OCO problems. The difficult constraints $s^i(x) \leq 0$ are embedded into each loss function $f_t(x)$ by aggregation with Lagrange dual multipliers $y$, to form a new loss function $\phi_t(x,y) = f_t(x) + \sum_{i=1}^m y^{(i)} s^i(x)$, which is convex in $x$ and concave in $y$. Both primal and dual variables $x,y$ are then updated each time step to obtain bounds on the regret and the violation $\sum_{t=1}^T s^i(x)$. The papers \cite{KoppelJakubiecRibeiro2015,Jenatton_etal2016} also use similar duality ideas for handling difficult constraints and objectives in online settings. Nevertheless, the convergence rates given in these papers are the usual $O(1/\sqrt{T})$ or slower. In this paper, we analyze online SP problems more generally, and explore faster rates in the $1$-lookahead setting.

Online settings with 1-lookahead naturally arise in metrical task systems \cite{BorodinLinialSaks1992,BuchbinderCNO2012,AndrewBLLMRW2013} and online display advertising \cite{Jenatton_etal2016}. In these settings, the variation of the decisions $x_1,\ldots,x_T$ across the time horizon is also penalized, and the performance of the sequence is measured as the competitive ratio of the realized loss with the best possible loss \cite{BorodinLinialSaks1992,BuchbinderCNO2012,AndrewBLLMRW2013} or as a dynamic regret term \cite{Jenatton_etal2016}. Both competitive ratio and dynamic regret objectives do not fit to our framework. Moreover, \cite[Section 4]{AndrewBLLMRW2013} show that standard regret and competitive ratio cannot be simultaneously optimized.

From an algorithmic point of view,
Mahdavi et al. \cite{MahdaviJinYang2012}, Chiang et al. \cite{ChiangYLMLJZ2012} and Yang et al. \cite{YangMahdaviJinZhou2012} examine online variants of the Mirror Prox algorithm when it is limited to work with \emph{only past information}. In particular, \cite{ChiangYLMLJZ2012,YangMahdaviJinZhou2012} provide regret bounds with `gradual variation' terms, which capture how quickly the sequence of functions $f_t$ vary. 
Rakhlin and Sridharan \cite{RakhlinSridharan2013a,RakhlinSridharan2013b} analyze $1$-lookahead decisions in OCO through the lens of \emph{predictable sequences}. They explore how one can exploit information from a single sequence $M_1,\ldots,M_T$ in an online framework, where each term $M_t$ is revealed to the player \emph{prior} to choosing the decision $x_t$. They provide the Optimistic Mirror Descent algorithm, which is essentially a generalization of Mirror Prox \cite{Nemirovski2005}, to exploit the sequence $M_1,\ldots,M_T$. In \cite{RakhlinSridharan2013a,RakhlinSridharan2013b}, they focus on uncoupled dynamics and zero-sum games, whereas our work focuses on  more general and flexible OCO problems, and designing and applying proper generalizations of FOMs such as Mirror Prox to more flexible OCO problems arising in the context of coupled optimization problems. That said, our work in Section~\ref{sec:smooth} is related to exploiting a specific predictable sequence; we elaborate on this in Remark~\ref{rem:predictable-sequence}.

\subsubsection*{Outline}\label{sec:outline}

In Section~\ref{sec:standardOCO}, we derive the concepts of weighted regret and online SP problems via the notion of affine regret, thereby allowing us to approach both problems through a common algorithmic framework, which we describe in Section~\ref{sec:weighted-regret-algorithms}. After introducing the basic proximal setup in Section~\ref{sec:prox-setup}, we analyze weighted regret OCO and online SP problems via the online Mirror Descent algorithm, and derive the standard $O(1/\sqrt{T})$ convergence rates in Section~\ref{sec:non-smooth}. We also show how strong convexity assumptions on the loss functions allow us to improve this to $O(1/T)$. In Section~\ref{sec:smooth}, we introduce and analyze an online variant of the Mirror Prox algorithm that  achieves $O(1/T)$ convergence rates under $1$-lookahead and smoothness assumptions. In Sections~\ref{sec:RO-app}~and~\ref{sec:JEO-app}, we apply the developments of Sections~\ref{sec:standardOCO}~and~\ref{sec:weighted-regret-algorithms} to the RO and JEO problems respectively. We close with a summary of our results and some future directions in Section~\ref{sec:Conclusions}.

\subsubsection*{Notation}\label{sec:prelim}

For a positive integer $n\in\N$,  we let $[n]=\{1,\ldots,n\}$ and define $\Delta_n:=\{x\in\R^n_+:~\sum_{i\in[n]} x_i=1\}$ to be the standard simplex. 
Throughout the paper,  
the subscript, e.g., $x_t,y_t,z_t,f_t,\phi_t$, is used to attribute items to the $t$-th time period or iteration.  
We use the notation $\{x_t\}_{t=1}^T$ to denote the collection of items $\{x_1,\ldots,x_T\}$. Given a vector $x\in\R^n$, we let $x^{(j)}$ denote its $j$-th coordinate for $j \in [n]$. One exception we make to this notation is that we always denote the convex combination weights $\theta\in\Delta_T$ with $\theta_t$. 
We use Matlab notation for vectors and matrices, i.e., $[x;y]$ denotes the concatenation of two column vectors $x$, $y$. 
Given $x,y\in\R^n$, $\la x,y\ra$ corresponds to the usual inner product of $x$ and $y$.  
Given a norm $\|\cdot\|$, we let $\|\cdot\|_*$ denote the corresponding dual norm. 
For $x\in\R^n$, $\|x\|_2$ denotes the Euclidean $\ell_2$-norm of $x$ defined as $\|x\|_2 = \sqrt{\la x,x\ra}$.  
We let $\partial f(x)$ be the subdifferential of $f$ taken at $x$. 
We abuse notation slightly by denoting $\grad f(x)$ for both the gradient of function $f$ at $x$ if $f$ is differentiable and a subgradient of $f$ at $x$, even if $f$ is not differentiable. If $\phi$ is of the form $\phi(x,y)$, then $\grad_x \phi(x,y)$ denotes the subgradient of $\phi$ at $x$ while keeping the other variables fixed at $y$.

\section{Generalized Regret in Online Convex Optimization}\label{sec:standardOCO}

In this section, we examine a number of generalizations of the regret concept and show how they can all be unified via an affine regret concept. We start with \emph{affine regret} given by 
\begin{equation}\label{eqn:linear-regret} 
\sum_{t=1}^{T} \la \xi_t, x_t \ra - \inf_{x \in X} \sum_{t=1}^{T} \la \xi_t, x \ra = \sup_{x \in X} \sum_{t=1}^{T} \la \xi_t, x_t - x \ra,
\end{equation}
where $\xi_t$ is a given loss vector at time $t$. Suppose that when the player makes a decision $x_t \in X$, the adversary returns $\xi_t = \grad f_t(x_t)$, where $f_t:X \to \R$ is some convex function. Then by the subgradient inequality we have $f_t(x_t) - f_t(x) \leq \la \grad f_t(x_t), x_t - x \ra = \la \xi_t, x_t - x \ra$  and hence 
\[
\frac{1}{T} \sum_{t=1}^{T} f_t(x_t) - \inf_{x \in X} \frac{1}{T} \sum_{t=1}^{T} f_t(x) \leq \sup_{x \in X} \frac{1}{T} \sum_{t=1}^{T} \la \xi_t, x_t - x \ra.
\]
This implies that the standard regret in OCO is upper bounded by the affine regret \eqref{eqn:linear-regret} where the loss vectors $\xi_t$ are the subgradients $\grad f_t(x_t)$. Then, to minimize usual regret,  it is enough to minimize the affine regret. That said, as will be discussed in Section~\ref{sec:weighted-regret-algorithms}, in order to obtain improved rates of convergence, we must go beyond affine regret and exploit further structural properties of the functions $f_t$. Even then, all the bounds from Section~\ref{sec:weighted-regret-algorithms} involve upper bounding the affine regret \eqref{eqn:linear-regret} in some fashion.

\subsection{OCO with Weighted Regret}\label{sec:W-regret}

The first flexibility we introduce to the OCO framework is scaling each time step $t$ by weights $\theta_t > 0$. With $\xi_t = \grad f_t(x_t)$,
and applying the subgradient inequality, this results in
\begin{equation}\label{eqn:weighted-regret-linear-regret-bound}
\sum_{t=1}^{T} \theta_t f_t(x_t) - \inf_{x\in X} \sum_{t=1}^{T} \theta_t f_t(x) \leq \sup_{x \in X} \sum_{t=1}^{T} \theta_t \la \xi_t, x_t - x \ra.
\end{equation}
We define the left hand side of this inequality to be the \emph{weighted regret}. From a modeling perspective, weighted regret enables us to model situations where later decisions $x_t$ carry higher importance by placing higher weights $\theta_t$ on subsequent periods $t$ (or vice versa).
For example, in a repeated game where performance of a player is aggregated from the loss at each stage, we may want to weigh the later stages more heavily than the earlier stages, since earlier stages might be used to explore the opponents' strategy, whereas in later stages we expect the player to have converged to a (near)-optimal strategy.

On the practical side, weighted regret lets us choose weights $\theta_t$ to speed up convergence. In particular, when the functions $f_t$ are strongly convex, our bounds of $O(1/T)$ for weighted regret improve on the optimal regret bounds $O(\log(T)/T)$ for the uniform weight case. Furthermore, weighted regret bounds become important when solving RO problems, where we combine two different regret terms \emph{which must have the same weights $\theta$} to obtain bounds. We discuss these practical aspects fully in Sections~\ref{sec:weighted-regret-algorithms} and \ref{sec:RO-app}.

Finally, note that while it is possible to view weighted regret as a rescaling of the functions $f_t$ with weights $\theta_t$, such a view will inevitably change the parameters associated with functions $f_t$ such as strong convexity. In contrast,  working with the weighted regret concept circumvents this issue; see Section~\ref{sec:strongly-convex} for our study on exploiting strong convexity.

Because we are interested in taking a weighted average, henceforth we will assume that we have convex combination weights $\theta:=(\theta_1,\ldots,\theta_T)\in\Delta_T$. Thus, we seek OCO algorithms for selecting $x_t$ that minimize the weighted regret and guarantee
\begin{equation}\label{eqn:OCO-weighted-regret}
\sum_{t=1}^{T} \theta_t f_t(x_t) - \inf_{x\in X} \sum_{t=1}^{T} \theta_t f_t(x) \leq r(T), \quad \lim_{T \to \infty} r(T) = 0.
\end{equation}

Regret bounds for online convex optimization algorithms naturally result in optimality gap bounds for the corresponding offline problems. 
\begin{remark}\label{rem:offline}
When the functions $f_t$ remain the same throughout the time horizon, i.e., $f_t=f$ for all $t\in[T]$, and $\bar{x}$ is taken to be the weighted sum of $\{x_t\}_{t=1}^T$ with weights $\theta\in\Delta_T$, the weighted regret in \eqref{eqn:OCO-weighted-regret} naturally bounds the standard optimality gap of solution $\bar{x}$ in the associated offline convex minimization problem $\min_{x\in X} f(x)$.
\epr
\end{remark}

\subsection{Online Saddle Point Problems}\label{sec:SPOCO}

The standard convex-concave saddle point (SP) problem is defined as
\begin{equation}\label{eqn:offline-saddlept}
\SV = \inf_{x \in X} \sup_{y \in Y} \phi(x,y) = \sup_{y \in Y} \inf_{x \in X} \phi(x,y),
\end{equation}
where $X,Y$ are nonempty compact convex sets in Euclidean spaces $\E_x,~\E_y$ and the function $\phi(x,y)$ is convex in $x$ and concave in $y$. 
Note that the latter equality in \eqref{eqn:offline-saddlept} holds because of the minimax theorem (see \cite{Sion1958}) under assumptions of compactness and convexity of the sets $X$ and $Y$, and $\phi$ admitting a convex-concave structure. 

Any convex-concave SP problem \eqref{eqn:offline-saddlept} gives rise to two convex optimization problems that are dual to each other:
\begin{equation*}\label{neq1}
\begin{array}{rclcr}
\Opt(P)&=&\inf_{x\in X}[ \overline{\phi}(x):=\sup_{y\in Y} \phi(x,y)]&&(P)\\
\Opt(D)&=&\sup_{y\in Y}[ \underline{\phi}(y):=\inf_{x\in X} \phi(x,y)] &&(D)\\
\end{array}
\end{equation*}
with $\Opt(P)=\Opt(D)=\SadVal$.
SP problem~\eqref{eqn:offline-saddlept} also leads to a monotone \emph{variational inequality} (VI) problem on $\cZ=X\times Y$: 
\[
\hbox{find  $z_*\in \cZ$ such that\ }\langle F(z),z-z_*\rangle \geq0\;\;\mbox{for all}\; z\in \cZ,
\]
 where $F:\cZ\mapsto \E_x\times \E_y$ is the monotone gradient operator given  by
\[ F(x,y) = [\grad_x \phi(x,y); - \grad_y \phi(x,y)]. \] 
It is well-known that the solutions to \eqref{eqn:offline-saddlept}---the saddle points of $\phi$ on $X\times Y$---are exactly the pairs $[x;y]$ formed by   
optimal solutions to the problems $(P)$ and $(D)$. 
They are also exactly the solutions to the associated VI problem.  

We quantify the accuracy of a candidate solution $[\bar{x},\bar{y}]$ to SP problem \eqref{eqn:offline-saddlept} with the \emph{saddle point gap} given by 
\begin{equation}\label{eqn:SPgap}
\epsilonsad^\phi(\bar{x},\bar{y}):=\overline{\phi}(\bar{x})-\underline{\phi}(\bar{y}) 
=\underbrace{\left[\overline{\phi}(\bar{x})-\Opt(P)\right]}_{\geq0}+
\underbrace{\left[\Opt(D)-\underline{\phi}(\bar{y})\right]}_{\geq0}.
\end{equation}
In order to solve \eqref{eqn:offline-saddlept} to accuracy $\epsilon > 0$, we must find $[x^\epsilon;y^\epsilon]$ such that the SP gap $\epsilonsad^\phi(x^\epsilon,y^\epsilon)\leq \epsilon$, i.e., it is small.

When $\phi(x,y)$ is convex in $x$, so is the function $\overline{\phi}(x)=\sup_{y\in Y} \phi(x,y)$. Hence, \eqref{eqn:offline-saddlept} has the interpretation of simply minimizing a convex function $\overline{\phi}(x)$ over the domain $X$. However, taking the supremum over $y\in Y$ in $\overline{\phi}(\cdot)$ may destroy some important structural properties of $\phi(x,y)$ such as smoothness. The main motivation for designing specific FOMs to solve offline SP problems in \cite{Nesterov2005,Nemirovski2005} is to exploit such structural properties of $\phi$ via the monotone gradient operator $F$ and rather not work with $\overline{\phi}(x)$ explicitly.

A natural extension of convex-concave SP problems to an online setup is as follows: We are given domains $X,Y$ and a time horizon $T$. At each time period $t\in[T]$, we  simultaneously select $[x_t;y_t] \in X \times Y$ and learn $\phi_t(x_t,y_t)$ based on a convex-concave function $\phi_t(x,y)$ revealed at the time period. We can think of this as a dynamic two-player zero-sum game, where at each stage $t$, each player makes only one move (decision) $x_t \in X$ and $y_t \in Y$ as opposed to reaching to an approximate  equilibrium. Then the goal of each player is to minimize their weighted regrets given the sequence of moves of the other player, i.e., 
\[ \sum_{t=1}^T \theta_t \phi_t(x_t,y_t) - \inf_{x \in X} \sum_{t=1}^{T} \theta_t \phi_t(x,y_t) \quad \text{and} \quad \sup_{y \in Y} \sum_{t=1}^T \theta_t \phi_t(x_t,y) - \sum_{t=1}^{T} \theta_t \phi_t(x_t,y_t).\footnote{Note that the $y$-player receives a \emph{concave reward} $\phi_t(x_t,y_t)$ at each time step, so their regret is written with the supremum.} \]

In this setup, we assume that at each period $t$, the decisions and actions (queries made to the function $\phi_t$) of each player, i.e., $x_t$ etc., are revealed to the other and vice versa \emph{immediately after} they make their decision or action. This revealed information from period $t$ can then be used by both players in their subsequent decisions and actions in the same period $t$ or in future rounds $t+1$ and so on.

Let us now examine the affine regret associated with the monotone gradient operators of the functions $\phi_t$, denoted by $F_t(x,y) = [\grad_x \phi_t(x,y); -\grad_y \phi_t(x,y)]$. More precisely, let $z = [x;y]$ and $z_t = [x_t;y_t]$, and define $\xi_t = F_t(z_t)$. Then we have the following relation on the affine regret: 
\begin{align*}
&\sup_{z \in X \times Y} \sum_{t=1}^{T} \theta_t \la \xi_t, z_t - z \ra \\
&\qquad = \sup_{z \in X \times Y} \sum_{t=1}^{T} \theta_t \left(\la \grad_x \phi_t(x_t,y_t), x_t - x \ra + \la \grad_y \phi_t(x_t,y_t), y - y_t \ra \right)\\
&\qquad = \sup_{x \in X} \sum_{t=1}^{T} \theta_t \la \grad_x \phi_t(x_t,y_t), x_t - x \ra + \sup_{y \in Y} \sum_{t=1}^{T} \theta_t \la \grad_y \phi_t(x_t,y_t), y - y_t \ra\\
&\qquad \geq \sup_{x \in X} \sum_{t=1}^{T} \theta_t (\phi_t(x_t,y_t) - \phi_t(x,y_t)) + \sup_{y \in Y} \sum_{t=1}^{T} \theta_t (\phi_t(x_t,y) - \phi_t(x_t,y_t))\\
&\qquad = \sum_{t=1}^{T} \theta_t \phi_t(x_t,y_t) - \inf_{x \in X} \sum_{t=1}^{T} \theta_t \phi_t(x,y_t) + \sup_{y \in Y} \sum_{t=1}^{T} \theta_t \phi_t(x_t,y) - \sum_{t=1}^{T} \theta_t \phi_t(x_t,y_t)\\
&\qquad = \sup_{y \in Y} \sum_{t=1}^{T} \theta_t \phi_t(x_t,y) - \inf_{x \in X} \sum_{t=1}^{T} \theta_t \phi_t(x,y_t),
\end{align*}
where the inequality follows from the convex-concave structure of $\phi_t(x,y)$ and the subgradient inequalities. Notice that the last line is simply the sum of both players' weighted regrets. Hence, minimizing the (weighted) affine regret of the gradient operators $F_t$ results in minimizing the average social loss, i.e., the sum of the players' regrets. We refer to this sum as the \emph{weighted online SP gap}, and call the problem of minimizing the weighted online SP gap the \emph{online SP problem}. More precisely, the online SP gap problem seeks OCO algorithms to generate $[x_t;y_t]$ that minimize the weighted online SP gap 
\begin{equation}\label{eqn:onlineSP-gap}
\sup_{y \in Y} \sum_{t=1}^{T} \theta_t \phi_t(x_t,y) - \inf_{x \in X} \sum_{t=1}^{T} \theta_t \phi_t(x,y_t) \leq r(T), \quad \lim_{T \to \infty} r(T) = 0.
\end{equation}

When the functions $\phi_t$ remain the same throughout the time horizon, i.e., $\phi_t=\phi$ for all $t\in[T]$, and $\bar{x}$, $\bar{y}$ are taken to be the weighted sums of $\{x_t\}_{t=1}^T$, $\{y_t\}_{t=1}^T$ respectively, the weighted online SP gap naturally bounds the standard SP gap for the underlying offline SP problem, i.e., $\epsilonsad^\phi(\bar{x},\bar{y})$ in \eqref{eqn:SPgap}.

An offline (online) SP problem can be solved by solving two related OCO problems, which can also be interpreted as two regret-minimizing players playing a static (dynamic) zero-sum game. Note that the reverse is not true in general: solving an offline (online) SP problem does not in general give us bounds on the individual regrets of each player.

The online SP gap interpretation of \eqref{eqn:onlineSP-gap} is advantageous when we relax the non-anticipatory restriction. In an online setup where 1-lookahead decisions are allowed, by examining specialized algorithms for minimizing the weighted online SP gap \eqref{eqn:onlineSP-gap} rather than employing two separate regret-minimization algorithms for the players, we can exploit both the fact that our choices $[x_t;y_t]$ may utilize the current function $\phi_t$ and any favorable structural properties of the functions $\phi_t$ such as smoothness. In Section~\ref{sec:smooth}, we introduce algorithms that minimize the weighted online SP gap \eqref{eqn:onlineSP-gap} directly. 
Our analysis demonstrates that exploiting favorable structural properties of functions $\phi_t$ plays a crucial role for obtaining better convergence rates for \eqref{eqn:onlineSP-gap}. See also Remark~\ref{rem:smoothLBdiscuss}.

\section{An Algorithmic Framework for Online Convex Optimization}\label{sec:weighted-regret-algorithms}

Many OCO algorithms are closely related to offline iterative FOMs. In this section, we first introduce some notation and key concepts related to the proximal setup for FOMs along with general properties of two classical FOMs, namely the \emph{Mirror Descent} and \emph{Mirror Prox} algorithms, that are crucial in our analysis for OCO. We then analyze the general versions of these FOMs to develop upper bounds on the weighted regret and weighted online SP gap. We follow the presentation and notation of the excellent survey \cite{JuditNem2012Pt1,JuditNem2012Pt2}. 

\subsection{Proximal Setup for the Domains}\label{sec:prox-setup}

Most {FOM}s capable of solving OCO and online SP problems are quite flexible in terms of adjusting to the geometry of the problem characterized by its domain $\cZ$. In the case of SP problems, the domain is given by $\cZ=X\times Y$ where \eqref{eqn:offline-saddlept} lives.  
The following components are standard in forming the setup for such {FOM}s and their convergence analysis:
\begin{itemize}
\item \emph{Norm}: $\|\cdot\|$ on the Euclidean space $\E$ where the domain $\cZ$ lives, along with its dual norm $\|\zeta\|_*:=\max\limits_{\|z\|\leq1}\langle\zeta,z\rangle$.
\item \emph{Distance-Generating Function} ({\dgf}): A function $\omega(z):\cZ\rightarrow \R$, which is convex and continuous on $\cZ$, and admits a selection of subgradients  $\grad\omega(z)$ that is continuous on the set $\cZ^\circ:=\{z\in \cZ:\partial\omega(z)\neq\emptyset\}$ (here $\partial\omega(z)$ is a subdifferential of $\omega$ taken at $z$), and is strongly convex with modulus 1 with respect to 
$\|\cdot\|$:
   \[ 
    \forall z',z''\in \cZ^\circ:~ \langle \grad\omega(z')-\grad\omega(z''),\ z'-z''\rangle \geq\|z'-z''\|^2. 
    \] 
\item \emph{Bregman distance}: $V_z(z'):=\omega(z')-\omega(z)-\langle \grad\omega(z),z'-z\rangle$ for all $z\in \cZ^\circ$ and $u\in \cZ$.

Note that $V_z(z') \geq \frac{1}{2} \|z-z'\|^2 \geq 0$ for all $z\in\cZ^\circ$ and $z'\in\cZ$ follows from the strong convexity of $\omega$. 

\item  \emph{Prox-mapping}: Given a \emph{prox center} $z\in \cZ^\circ$,
  \[
  \Prox_z(\xi):=\argmin\limits_{z' \in \cZ}\left\{\langle \xi,z'\rangle + V_z(z')\right\}: \E\to \cZ^\circ.
  \]

When the {\dgf} is taken as the squared $\ell_2$-norm, the prox mapping becomes the usual projection operation of the vector $z-\xi$ onto $\cZ$.
\item \emph{$\omega$-center}: $z_\omega:=\argmin\limits_{z\in \cZ}\omega(z)$.
\item \emph{Set width}: $\Omega=\Omega_z:=\max\limits_{z\in \cZ}V_{z_\omega}(z)\leq\max\limits_{z\in \cZ}\omega(z)-\min\limits_{z\in \cZ}\omega(z)$.
\end{itemize}
For common domains $\cZ$ such as simplex, Euclidean ball, and spectahedron, standard proximal setups, i.e., selection of norm $\|\cdot\|$, {\dgf} $\omega(\cdot)$, the resulting $\Prox$ computations and set widths $\Omega$ are discussed in \cite[Section 5.7]{JuditNem2012Pt1}. 
 
When we have a decomposable domain $\cZ = X \times Y$, we can build a proximal setup for $\cZ$ from the individual proximal setups on $X$ and $Y$. Given a norm $\|\cdot\|_x$ and a {\dgf} $\omega_x(\cdot)$ for the domain $X$, similarly $\|\cdot\|_y$, $\omega_y(\cdot)$ for the domain $Y$, and 
two scalars $\beta_x,\beta_y>0$, we build the {\dgf} $\omega(z)$ and $\omega$-center $z_\omega$ for $\cZ=X\times Y$ as
\[
\omega(z)=\beta_x\omega_x(x)+\beta_y\omega_y(y) \quad \mbox{ and } \quad
z_\omega = [x_{\omega_x}; y_{\omega_y}],
\]
where $\omega_x(\cdot)$ and $\omega_y(\cdot)$ as well as $x_{\omega_x}$ and $y_{\omega_y}$ are customized based on the geometry of the domains $X$ and $Y$.  In this construction, the flexibility in determining the scalars $\beta_x,\beta_y>0$ is useful in optimizing the overall convergence rate. Moreover, by letting $\xi=[\xi_x;\xi_y]$ and $z=[x;y]$, the prox mapping becomes decomposable as
\[
\Prox_z(\xi)= \left[\Prox_{x}^{\omega_x}\left({\xi_x\over\beta_x}\right);\,\Prox_{y}^{\omega_y}\left({\xi_y\over\beta_y}\right)\right],
\]
where $\Prox_{x}^{\omega_x}(\cdot)$ and $\Prox_{y}^{\omega_y}(\cdot)$ are respectively prox mappings with respect to 
$\omega_x(\cdot)$ in domain $X$ and $\omega_y(\cdot)$ in domain $Y$. 
We refer the reader to the references \cite[Section 5.7.2]{JuditNem2012Pt1} and \cite[Section 6.3.3]{JuditNem2012Pt2} for further details on how to optimally choose the parameters $\beta_x,\beta_y$ for SP problems.

\subsection{Non-Smooth Convex Functions}\label{sec:non-smooth}

In the most basic setup, our functions $f_t$ (resp.\@ $\phi_t$) are convex (resp.\@ convex-concave) and non-smooth. In this case, we analyze a generalization of Mirror Descent, outlined in Algorithm~\ref{alg:md} for bounding the weighted regret and weighted online SP gap.
\begin{algorithm}
\caption{Generalized Mirror Descent}\label{alg:md}
\begin{algorithmic}
\STATE {\bf input:~} $\omega$-center $z_\omega$, time horizon $T$, positive step sizes $\{\gamma_t\}_{t=1}^T$, and a sequence $\{\xi_t\}_{t=1}^T$.
\STATE {\bf output:~} sequence $\{z_t\}_{t=1}^T$.
\STATE $z_1 := z_\omega$.
  \FOR{$t=1,\ldots,T$}
      \STATE $z_{t+1} = \Prox_{z_{t}} (\gamma_t \xi_t)$ \label{alg:md:first_prox}
  \ENDFOR
\end{algorithmic}
\end{algorithm}

\begin{remark}\label{rem:MDnon-anticipatory}
In Algorithm~\ref{alg:md}, computation of $z_t$ depends on only $z_{t-1}$ and $\xi_{t-1}$. In the following we will examine Algorithm~\ref{alg:md} by allowing $\xi_{t-1}$ to depend on only the past information on functions $f_1,\ldots,f_{t-1}$ (or $\phi_1,\ldots,\phi_{t-1}$). Then the iterations in Algorithm~\ref{alg:md} will be based on solely the past information allowing us to carry out a \emph{non-anticipatory} analysis for  Algorithm~\ref{alg:md}.
\epr
\end{remark}

Proposition~\ref{prop:md-convergence} describes a fundamental property exhibited by the Mirror Descent updates. Its proof can be found in \cite[Proposition 5.1, Equation 5.13]{JuditNem2012Pt1}, and we include it here for completeness.

\begin{proposition}\label{prop:md-convergence}
Suppose that the sequence of vectors $\{z_t\}_{t=1}^T$ is generated by Algorithm~\ref{alg:md} for a given sequence of vectors $\{\xi_t\}_{t=1}^T$ and step sizes $\gamma_t>0$ for $t\in [T]$. Then for any $z \in \cZ$ and $t\in[T]$, we have
\begin{equation}\label{eqn:md-convergence}
\gamma_t \la \xi_t, z_t - z \ra \leq V_{z_t}(z) - V_{z_{t+1}}(z) + \frac{1}{2} \gamma_t^2 \|\xi_t\|_*^2.
\end{equation}
\end{proposition}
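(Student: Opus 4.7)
The plan is to derive the inequality from the first-order optimality condition at $z_{t+1}$ together with the three-point identity for Bregman distances and the strong convexity of $\omega$. I would first decompose the quantity we want to bound as
\[
\gamma_t \langle \xi_t, z_t - z \rangle = \gamma_t \langle \xi_t, z_{t+1} - z \rangle + \gamma_t \langle \xi_t, z_t - z_{t+1} \rangle,
\]
and then bound each term separately, re-assembling at the end.

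For the first term, I would invoke the definition $z_{t+1} = \argmin_{z' \in \mathcal{Z}}\{\langle \gamma_t \xi_t, z'\rangle + V_{z_t}(z')\}$ and write down the variational inequality/first-order optimality condition: for every $z \in \mathcal{Z}$,
\[
\langle \gamma_t \xi_t + \nabla \omega(z_{t+1}) - \nabla \omega(z_t),\ z - z_{t+1} \rangle \geq 0.
\]
Rearranging gives $\gamma_t\langle \xi_t, z_{t+1} - z\rangle \leq \langle \nabla\omega(z_{t+1}) - \nabla\omega(z_t),\ z - z_{t+1}\rangle$. I would then apply the standard three-point (magic) identity for Bregman divergences,
\[
\langle \nabla\omega(z_{t+1}) - \nabla\omega(z_t),\ z - z_{t+1}\rangle = V_{z_t}(z) - V_{z_{t+1}}(z) - V_{z_t}(z_{t+1}),
\]
which is a direct consequence of the definition of $V$. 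This yields $\gamma_t\langle \xi_t, z_{t+1} - z\rangle \leq V_{z_t}(z) - V_{z_{t+1}}(z) - V_{z_t}(z_{t+1})$.

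For the second term, I would use Fenchel--Young (Cauchy--Schwarz in the dual-norm sense) to split the product:
\[
\gamma_t \langle \xi_t, z_t - z_{t+1}\rangle \;\leq\; \tfrac{1}{2}\gamma_t^2 \|\xi_t\|_*^2 + \tfrac{1}{2}\|z_t - z_{t+1}\|^2.
\]
Adding the two bounds and using the strong convexity consequence $V_{z_t}(z_{t+1}) \geq \tfrac{1}{2}\|z_t - z_{t+1}\|^2$ cancels the $\tfrac{1}{2}\|z_t - z_{t+1}\|^2$ term against $-V_{z_t}(z_{t+1})$, leaving exactly the desired bound \eqref{eqn:md-convergence}.

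The main step requiring care is the three-point identity, which needs $\nabla\omega$ to make sense at $z_{t+1}$; this is handled by the assumption that the prox mapping returns a point in $\mathcal{Z}^{\circ}$, so that $\partial\omega(z_{t+1}) \neq \emptyset$ and the selection $\nabla\omega(z_{t+1})$ used in the optimality condition is the same one featured in $V_{z_{t+1}}(\cdot)$. Beyond that subtlety, the remainder is a mechanical combination of the optimality condition, Fenchel--Young, and the strong convexity of the d.g.f.
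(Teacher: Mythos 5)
Your proof is correct and follows essentially the same route as the paper's: the same decomposition through $z_{t+1}$, the same first-order optimality condition for the prox step, the three-point Bregman identity, strong convexity of $\omega$, and a dual-norm bound on the remaining inner product. The only cosmetic difference is that you apply Young's inequality directly where the paper applies Cauchy--Schwarz and then maximizes the quadratic $\gamma_t\|\xi_t\|_*s - s^2/2$, which is the same estimate.
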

\begin{proof}
Recall that
\[z_{t+1} = \Prox_{z_t}(\gamma_t \xi_t) = \argmin_{z \in \CZ} \left\{ \gamma_t \la \xi_t, z \ra + V_{z_t}(z) \right\} = \argmin_{z \in \CZ} \left\{ \la \gamma_t \xi_t - \grad \omega(z_t), z \ra + \omega(z) \right\}. \]
We first prove that, for all $z \in \CZ$, $\la \gamma_t \xi_t - \grad \omega(z_t) + \grad \omega(z_{t+1}), z - z_{t+1} \ra \geq 0$. Fix some $z \in Z$ and consider the function $h_{z_{t+1},z}(s) = \la \gamma_t \xi_t - \grad \omega(z_t),  z_{t+1} + s(z - z_{t+1}) \ra + \omega(z_{t+1} + s(z - z_{t+1}))$ defined for $s \in [0,1]$. In general, $h_{z_{t+1},z}$ may not be differentiable since $\omega$ may not be, but we know that it is convex, hence subgradients exist, and by definition of $z_{t+1}$ as the minimizer, it is non-decreasing, hence all subgradients of $h$ are non-negative. In particular, all subgradients of $h_{z_{t+1},z}$ at $s=0$ are non-negative, and it is a simple exercise to check that $\la \gamma_t \xi_t - \grad \omega(z_t) + \grad \omega(z_{t+1}), z - z_{t+1} \ra$ is one such subgradient. We now know that for all $z \in \CZ$,
\[\la \gamma_t \xi_t - \grad \omega(z_t) + \grad \omega(z_{t+1}), z - z_{t+1} \ra \geq 0.\]
We thus have
\begin{align*}
\gamma_t \la \xi_t, z_t - z \ra &\leq \la \grad \omega(z_{t+1}) - \grad \omega(z_t), z - z_{t+1} \ra + \gamma_t \la \xi_t, z_t - z_{t+1} \ra\\
&= V_{z_t}(z) - V_{z_{t+1}}(z) - V_{z_t}(z_{t+1}) + \gamma_t \la \xi_t, z_t - z_{t+1} \ra\\
&\leq V_{z_t}(z) - V_{z_{t+1}}(z) - \frac{1}{2} \|z_t - z_{t+1}\|^2 + \gamma_t \la \xi_t, z_t - z_{t+1} \ra\\
&\leq V_{z_t}(z) - V_{z_{t+1}}(z) - \frac{1}{2} \|z_t - z_{t+1}\|^2 + \gamma_t \|\xi_t\|_* \|z_t - z_{t+1}\|,
\end{align*}
where the second inequality follows by strong convexity of $\omega$ and the third inequality follows by the definition of the dual norm. The result now follows by recognizing that $\max_{s} \left\{ \gamma_t \|\xi_t\|_*s - s^2/2 \right\} = \gamma_t^2 \|\xi_t\|_*^2/2$.  
\end{proof}

\subsubsection{Weighted Regret}\label{sec:basicWeightedRegret}

From Proposition~\ref{prop:md-convergence}, we may derive a bound on the weighted regret \eqref{eqn:OCO-weighted-regret} in the most general case where our functions $f_t(x)$ need only satisfy convexity and Lipschitz continuity. More precisely, we will assume the following.
\begin{assumption}\label{ass:OCO-non-smooth}
A proximal setup of Section~\ref{sec:prox-setup} exists for the domain $\cZ = X$. Each function $f_t$ is convex, and there exists $G \in(0,\infty)$ such that the subgradients of  $f_t$ are bounded, i.e., $\|\grad f_t(x)\|_* \leq G$ for all $x \in X$ and $t \in [T]$. 
\end{assumption}

\begin{theorem}\label{thm:OCO-non-smooth}
Suppose Assumption~\ref{ass:OCO-non-smooth} holds, and we are given weights $\theta \in \Delta_T$. Then running Algorithm~\ref{alg:md} with $z_t = x_t$, $\xi_t = \theta_t \grad f_t (x_t)$, and step sizes $\gamma_t =\gamma:= \sqrt{\frac{2 \Omega}{\sup_{t \in [T]} \theta_t^2 G^2 T}}$ for all $t\in[T]$ results in
\[ \sum_{t=1}^{T} \theta_t f_t(x_t) - \inf_{x \in X} \sum_{t=1}^{T} \theta_t f_t(x) \leq \sqrt{2 \Omega \left(\sup_{t \in [T]} \theta_t^2\right) G^2 T}. \]
\end{theorem}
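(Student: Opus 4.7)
The plan is to combine Proposition~\ref{prop:md-convergence} with the subgradient inequality, and then telescope. First, by convexity of each $f_t$ and the definition $\xi_t = \theta_t \grad f_t(x_t)$, for any $x \in X$ we have
\[
\theta_t(f_t(x_t) - f_t(x)) \;\leq\; \theta_t \la \grad f_t(x_t), x_t - x\ra \;=\; \la \xi_t, x_t - x\ra.
\]
So it suffices to upper bound $\sum_{t=1}^T \la \xi_t, x_t - x\ra$ uniformly over $x \in X$. I will obtain this bound by invoking Proposition~\ref{prop:md-convergence} directly (Algorithm~\ref{alg:md} is run with $z_t = x_t$ and these $\xi_t$), dividing \eqref{eqn:md-convergence} through by $\gamma_t = \gamma$, and summing over $t \in [T]$.

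The resulting sum has two parts. The Bregman distance terms telescope:
\[
\sum_{t=1}^T \bigl(V_{x_t}(x) - V_{x_{t+1}}(x)\bigr) \;=\; V_{x_1}(x) - V_{x_{T+1}}(x) \;\leq\; V_{x_\omega}(x) \;\leq\; \Omega,
\]
using $x_1 = z_\omega = x_\omega$, non-negativity of $V$, and the set width definition. For the squared-norm terms, Assumption~\ref{ass:OCO-non-smooth} gives $\|\xi_t\|_* = \theta_t \|\grad f_t(x_t)\|_* \leq \theta_t G$, hence
\[
\frac{1}{2} \sum_{t=1}^T \gamma \|\xi_t\|_*^2 \;\leq\; \frac{\gamma}{2} \Bigl(\sup_{t \in [T]} \theta_t^2\Bigr) G^2 T.
\]
Putting these together yields, for every $x \in X$,
\[
\sum_{t=1}^T \la \xi_t, x_t - x\ra \;\leq\; \frac{\Omega}{\gamma} + \frac{\gamma}{2} \Bigl(\sup_{t \in [T]} \theta_t^2\Bigr) G^2 T.
\]

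The final step is to plug in the prescribed step size $\gamma = \sqrt{\tfrac{2\Omega}{\sup_{t}\theta_t^2 \, G^2 T}}$, which is precisely the minimizer of the right-hand side over $\gamma > 0$, and verify that the minimum value equals $\sqrt{2\Omega \bigl(\sup_{t}\theta_t^2\bigr) G^2 T}$. Taking the supremum over $x \in X$ and combining with the subgradient-inequality bound from the first paragraph concludes the proof. I do not anticipate a serious obstacle: the argument is essentially a standard Mirror Descent regret telescoping, with the only subtlety being that the scaling $\xi_t = \theta_t \grad f_t(x_t)$ pushes the weights into the dual-norm bound, which is what forces the $\sup_{t}\theta_t^2$ factor to appear rather than a uniform $G^2$ factor.
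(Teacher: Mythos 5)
Your proposal is correct and follows essentially the same route as the paper's proof: apply Proposition~\ref{prop:md-convergence} with $\xi_t = \theta_t \grad f_t(x_t)$, telescope the Bregman terms to $\Omega$, bound $\|\xi_t\|_* \leq \theta_t G \leq (\sup_t \theta_t) G$, optimize over $\gamma$, and finish with the subgradient inequality. The only difference is the order in which the subgradient inequality is invoked, which is immaterial.
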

Note that Theorem \ref{thm:OCO-non-smooth} is a simple generalization of the fundamental result of \cite{Zinkevich2003}. We include its proof for completeness.
\begin{proof}
By summing up \eqref{eqn:md-convergence} for $t\in[T]$ and writing $\gamma_t = \gamma$ as a constant we obtain
\[ \sum_{t=1}^{T} \gamma_t \la \xi_t, x_t - x \ra = \gamma \sum_{t=1}^{T} \theta_t \la \grad f_t(x_t), x_t - x \ra \leq V_{x_1}(x) - V_{x_{T+1}}(x) + \frac{\gamma^2}{2} \sum_{t=1}^{T} \theta_t^2 \|\grad f_t(x_t)\|_*^2. \]
Because $\|\theta_t \grad f_t (x_t)\|_* \leq \theta_t G \leq \left(\sup_{t \in T} \theta_t\right) G$, $V_{x_1}(x) \leq \Omega$ by our choice of $x_1$ in Algorithm~\ref{alg:md}, $-V_{x_{T+1}}(x) \leq 0$, and dividing through by $\gamma$, we reach to 
\[\sum_{t=1}^{T} \theta_t \la \grad f_t(x_t), x_t - x \ra \leq \frac{\Omega}{\gamma} + \frac{\gamma}{2} \left(\sup_{t \in [T]} \theta_t^2\right) G^2 T.\]
Optimizing the right hand side over $\gamma \geq 0$ gives us the desired upper bound of $\sqrt{2 \Omega \left(\sup_{t \in [T]} \theta_t^2\right) G^2 T}$. The left hand side of  inequality in the theorem follows from $\theta_t\geq0$ for all $t\in[T]$ and  the convexity of functions $f_t$ implying for all $x \in X$ 
\[ \la \xi_t, x_t - x \ra = \theta_t \la \grad f_t(x_t), x_t - x \ra \geq \theta_t f_t(x_t) - \theta_t f_t(x).\]
\end{proof}
The bound on weighted regret in Theorem~\ref{thm:OCO-non-smooth} is optimized when the convex combination weights $\theta \in \Delta_T$ are set to be \emph{uniform}, i.e., $\theta_t = 1/T$; in this case, the right hand side of the inequality becomes $O(1/\sqrt{T})$. 

\begin{remark}\label{rem:Omega}
We would like to highlight the importance of customizing our proximal setup based on the geometry of the domain. In many cases, weighted regret or weighted online SP gap bounds have a dependence on the set width parameter $\Omega$ associated with the proximal setup; see e.g., Theorem~\ref{thm:OCO-non-smooth}. For example, when our domain $X=\Delta_n$, equipping $X$ with a proximal setup based on negative entropy {\dgf} $\omega(x) = \sum_{j=1}^{n} x^{(j)} \log(x^{(j)})$ results in $\Omega=\log(n)$, which is almost dimension independent. Using the Euclidean {\dgf} $\omega(x) = {1\over 2}\la x,x \ra$  
on $X=\Delta_n$ leads to a suboptimal (and dimension-dependent) set width of $\Omega = \sqrt{n}$. Moreover, certain domains admit {\dgf}s that lead to quite efficient $\Prox$ computations given either in closed form or by simple computations, taking only $O(n)$ arithmetic operations. Negative entropy {\dgf} over simplex and Euclidean {\dgf} over the Euclidean unit ball are such examples. 
A possible issue for equipping the simplex with a Euclidean proximal setup is that the prox-mapping (usual projection) no longer has a closed form, but it still can be done efficiently in $O(n\log(n))$ arithmetic operations. See \cite{JuditNem2012Pt1} for a complete discussion.
\epr
\end{remark}

\subsubsection{Exploiting Strong Convexity}\label{sec:strongly-convex}

When our functions $f_t$ admit further favorable structure in the form of strong convexity, it is possible to customize Algorithm~\ref{alg:md} using specific \emph{nonuniform} weights $\theta_t$ and achieve a bound of $O(1/T)$, which is significantly better than the standard $O(1/\sqrt{T})$ bound of Theorem~\ref{thm:OCO-non-smooth} given by uniform weights. Our developments here are based on the following structural assumption.
\begin{assumption}\label{ass:OCO-strong-convex}\emph{}
\begin{itemize}
\item A proximal setup of Section~\ref{sec:prox-setup} exists for the domain $\cZ = X$.
\item The loss functions $f_t(x)$ for $t\in[T]$ have the property that the functions $f_t(x) - \alpha\, \omega(x)$ is convex for some $\alpha > 0$  independent of $t$, or equivalently 
\[ f_t(x) \leq f_t(x') + \la \grad f_t(x), x - x' \ra - \alpha V_x(x'), \quad \forall x,x' \in X, \ t \in [T]. \]
\item The subgradients of the loss functions are bounded, i.e., there exists $G\in(0,\infty)$ such that $\|\grad f_t(x)\|_* \leq G$ for all $x \in X,\ t \in [T]$. 
\end{itemize}
\end{assumption}
\begin{remark}\label{rem:strongConvexity}
When our proximal setup for $X$ is based on a Euclidean {\dgf} $\omega(x) = {1\over 2}\la x,x \ra$ and Euclidean norm $\|x\|_2$, then Assumption~\ref{ass:OCO-strong-convex} simply states that the functions $f_t$ are $\alpha$-strongly convex. In this paper, we will abuse terminology slightly and say that $f_t$ is $\alpha$-strongly convex when $f_t(x) - \alpha\, \omega(x)$ is convex, where the dependence on the {\dgf} $\omega$ will be clear from the context.
\epr
\end{remark}

In equation \eqref{eqn:weighted-regret-linear-regret-bound}, we demonstrated that the weighted regret of a sequence of functions and points $\{f_t,x_t\}_{t=1}^T$ can be upper bounded by a affine regret term with loss vectors $\xi_t = \grad f_t(x_t)$. Under Assumption~\ref{ass:OCO-strong-convex}, we can improve this upper bound via the following lemma.
\begin{lemma}\label{lemma:OCO-strong-convex}
Suppose that for $t \in [T]$, the loss functions $f_t$ satisfy Assumption~\ref{ass:OCO-strong-convex}. Given a sequence $\{x_t\}_{t=1}^T$, define $q_t(x) := \la \grad f_t(x_t), x \ra + \alpha V_{x_t}(x)$. Then the weighted regret of the sequence $\{x_t\}_{t=1}^T$ on the functions $f_t$ can be bounded by the weighted regret of the same sequence on the functions $q_t$:
\begin{align}
\sum_{t=1}^T \theta_t f_t(x_t) - \inf_{x \in X} \sum_{t=1}^T \theta_t f_t(x) 
&\leq \sup_{x \in X} \sum_{t=1}^T \theta_t \left( \la \grad f_t(x_t), x_t - x \ra - \alpha V_{x_t}(x) \right) \notag \\
&= \sum_{t=1}^T \theta_t q_t(x_t) - \inf_{x \in X} \sum_{t=1}^T \theta_t q_t(x) .
\label{eqn:weighted-regret-strongly-convex}
\end{align}
\end{lemma}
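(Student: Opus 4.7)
The plan is to apply the strong-convexity inequality from Assumption~\ref{ass:OCO-strong-convex} pointwise at each $t$, then aggregate with the weights $\theta_t$ and finally take a supremum over $x \in X$. The lemma is essentially a bookkeeping statement once strong convexity (with respect to the {\dgf} $\omega$) is invoked in the correct direction.

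First I would fix $t \in [T]$ and an arbitrary $x \in X$, and apply the inequality stated in Assumption~\ref{ass:OCO-strong-convex} with the roles of the two points set to $x_t$ (the ``base'' point where the subgradient is taken) and $x$ (the comparison point). This yields
\[
f_t(x_t) - f_t(x) \;\leq\; \la \grad f_t(x_t), x_t - x \ra - \alpha\, V_{x_t}(x).
\]
Multiplying by $\theta_t \geq 0$ and summing over $t \in [T]$ gives
\[
\sum_{t=1}^T \theta_t\bigl(f_t(x_t) - f_t(x)\bigr) \;\leq\; \sum_{t=1}^T \theta_t \bigl( \la \grad f_t(x_t), x_t - x \ra - \alpha\, V_{x_t}(x) \bigr).
\]
Taking the supremum over $x \in X$ on both sides produces the first inequality in \eqref{eqn:weighted-regret-strongly-convex}, since $\sup_{x \in X}\sum_t \theta_t(f_t(x_t)-f_t(x)) = \sum_t \theta_t f_t(x_t) - \inf_{x \in X}\sum_t \theta_t f_t(x)$.

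For the equality in \eqref{eqn:weighted-regret-strongly-convex}, I would use the definition $q_t(x) = \la \grad f_t(x_t), x\ra + \alpha V_{x_t}(x)$ together with the fact that $V_{x_t}(x_t) = 0$ (which follows directly from the definition of the Bregman distance). This immediately yields $q_t(x_t) = \la \grad f_t(x_t), x_t\ra$, and hence
\[
q_t(x_t) - q_t(x) \;=\; \la \grad f_t(x_t), x_t - x \ra - \alpha\, V_{x_t}(x),
\]
so that $\sup_{x\in X}\sum_t \theta_t(q_t(x_t)-q_t(x)) = \sum_t \theta_t q_t(x_t) - \inf_{x\in X}\sum_t \theta_t q_t(x)$ coincides with the middle expression.

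There is no genuine obstacle here; the only subtlety is ensuring the strong-convexity inequality is applied with the correct ordering of arguments (the subgradient is taken at $x_t$, and the Bregman term $V_{x_t}(x)$ is centered at $x_t$), which is precisely why the $q_t$ functions are defined with $V_{x_t}(\cdot)$ rather than $V_{(\cdot)}(x_t)$. The value of the lemma is not its proof but its reformulation: it shows that minimizing the weighted regret against strongly convex losses $f_t$ reduces to minimizing the weighted regret against the quadratic-like surrogates $q_t$, which will be exploited in the subsequent analysis to obtain the $O(1/T)$ rate.
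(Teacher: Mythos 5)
Your proof is correct and follows essentially the same route as the paper's: invoke the strong-convexity inequality of Assumption~\ref{ass:OCO-strong-convex} at each $t$ with the subgradient taken at $x_t$, aggregate with the weights $\theta_t$, take the supremum over $x\in X$, and obtain the equality from $V_{x_t}(x_t)=0$. The paper's own proof is just a terser statement of the same two observations.
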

\begin{proof}
Assumption~\ref{ass:OCO-strong-convex} implies that $f_t(x) - \alpha\,  \omega(x)$ is convex, and thus the inequality holds. The equality holds since $V_{x_t}(x_t) = 0$.  
\end{proof}

Notice that since $V_{x_t}(x) \geq 0$, \eqref{eqn:weighted-regret-strongly-convex} is an improvement on the affine regret bound of \eqref{eqn:weighted-regret-linear-regret-bound}. By Lemma~\ref{lemma:OCO-strong-convex}, in order to bound the weighted regret, it suffices to bound the right hand term of \eqref{eqn:weighted-regret-strongly-convex}. By selecting the step sizes $\gamma_t$ and weights $\theta_t$ in a clever fashion, we are able to exploit the extra $-\alpha V_{x_t}(x)$ terms to improve the regret bound. This result is a generalization of the offline stochastic gradient descent algorithm equipped with a Euclidean {\dgf} based proximal setup presented in Lacoste-Julien et al.\@ \cite{LacosteJSchmidtBach2012} to the online setting with domain $X$ admitting a general proximal setup. 

\begin{theorem}\label{thm:OCO-strongly-convex}
Suppose Assumption~\ref{ass:OCO-strong-convex} holds. Fix a set of convex combination weights $\theta_t = \frac{2t}{T(T+1)}$ for $t\in[T]$. Then running  Algorithm~\ref{alg:md} with $z_t = x_t$, $\xi_t = \grad f_t(x_t)$, and step sizes $\gamma_t = \frac{2}{\alpha(t+1)}$ for all $t\in[T]$ results in
\begin{align*}
\sum_{t=1}^{T} \theta_t f_t(x_t) - \inf_{x\in X} \sum_{t=1}^{T} \theta_t f_t(x) \leq \sup_{x \in X} \sum_{t=1}^T \theta_t \left( \la \grad f_t(x_t), x_t - x \ra - \alpha V_{x_t}(x) \right) \leq \frac{2 G^2}{\alpha\,(T+1)}.
\end{align*} 
\end{theorem}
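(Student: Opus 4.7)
The first inequality is immediate from Lemma~\ref{lemma:OCO-strong-convex}, so the entire task reduces to proving the second inequality. The plan is to apply Proposition~\ref{prop:md-convergence} to each iterate $x_t$ and then choose the weighting so that the ``bookkeeping'' terms $V_{x_t}(x)$ telescope.

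Starting from Proposition~\ref{prop:md-convergence} applied with the vectors fed to Algorithm~\ref{alg:md} (so that the increments inside the prox call are $\gamma_t \grad f_t(x_t)$), I would divide through by $\gamma_t$ to obtain, for each fixed $x \in X$,
\[
\langle \grad f_t(x_t), x_t - x \rangle \;\leq\; \tfrac{1}{\gamma_t}\bigl(V_{x_t}(x) - V_{x_{t+1}}(x)\bigr) + \tfrac{\gamma_t}{2} G^2.
\]
Multiplying by $\theta_t$ and subtracting the strong-convexity term $\theta_t \alpha V_{x_t}(x)$ from both sides gives
\[
\theta_t\bigl(\langle \grad f_t(x_t), x_t - x\rangle - \alpha V_{x_t}(x)\bigr)
\;\leq\;
\Bigl(\tfrac{\theta_t}{\gamma_t} - \theta_t \alpha\Bigr) V_{x_t}(x) - \tfrac{\theta_t}{\gamma_t} V_{x_{t+1}}(x) + \tfrac{\theta_t \gamma_t}{2} G^2.
\]

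The crux is to verify that the coefficients on the Bregman-distance terms telescope. Plugging in $\theta_t = \frac{2t}{T(T+1)}$ and $\gamma_t = \frac{2}{\alpha(t+1)}$, one computes $\frac{\theta_t}{\gamma_t} = \frac{\alpha\, t(t+1)}{T(T+1)}$ and $\frac{\theta_t}{\gamma_t} - \theta_t \alpha = \frac{\alpha\, t(t-1)}{T(T+1)}$. In particular, the coefficient on $V_{x_t}(x)$ at step $t$ equals the coefficient $-\theta_{t-1}/\gamma_{t-1} = -\frac{\alpha(t-1)t}{T(T+1)}$ from step $t-1$ up to sign, so summing over $t \in [T]$ collapses the Bregman terms. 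The ``initial'' coefficient at $t=1$ on $V_{x_1}(x)$ is $0$, and the ``final'' coefficient on $V_{x_{T+1}}(x)$ is $-\alpha$, which is nonpositive and may be dropped. This leaves
\[
\sum_{t=1}^{T} \theta_t\bigl(\langle \grad f_t(x_t), x_t - x\rangle - \alpha V_{x_t}(x)\bigr) \;\leq\; \tfrac{G^2}{2}\sum_{t=1}^{T} \theta_t \gamma_t.
\]

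To finish, I compute $\theta_t \gamma_t = \frac{4t}{\alpha T(T+1)(t+1)}$, so
\[
\tfrac{G^2}{2}\sum_{t=1}^{T} \theta_t \gamma_t \;=\; \tfrac{2 G^2}{\alpha T(T+1)} \sum_{t=1}^T \tfrac{t}{t+1} \;\leq\; \tfrac{2 G^2}{\alpha T(T+1)} \cdot T \;=\; \tfrac{2 G^2}{\alpha(T+1)},
\]
using $\frac{t}{t+1} \le 1$. Taking the supremum over $x \in X$ on the left yields the claimed bound. The main obstacle is just checking the telescoping identity for the prescribed $(\theta_t,\gamma_t)$: once the coefficients on $V_{x_t}(x)$ at consecutive steps are shown to cancel exactly, everything else is straightforward arithmetic.
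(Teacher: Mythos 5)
Your proposal is correct and follows essentially the same route as the paper's proof: apply Proposition~\ref{prop:md-convergence}, subtract the strong-convexity term, and choose the weights so that the Bregman terms telescope with a vanishing initial coefficient; the only cosmetic difference is that you multiply by $\theta_t$ directly, whereas the paper multiplies by $t$ and divides by $T(T+1)/2$ at the end. All the coefficient computations ($\theta_t/\gamma_t = \alpha t(t+1)/(T(T+1))$, the exact cancellation, and the final bound via $t/(t+1)\le 1$) check out.
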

\begin{proof}
By Lemma~\ref{lemma:OCO-strong-convex}, the first inequality holds, so we focus on the second. Proposition~\ref{prop:md-convergence} gives us the following inequality for all $x \in X$
\begin{align*}
\gamma_t \la \xi_t, x_t - x \ra = \gamma_t \la \grad f_t(x_t), x_t - x \ra 
&\leq V_{x_t}(x) - V_{x_{t+1}}(x) + \frac{\gamma_t^2}{2} \| \xi_t \|_*^2 \\
&= V_{x_t}(x) - V_{x_{t+1}}(x) + \frac{\gamma_t^2}{2} \|\grad f_t(x_t) \|_*^2.
\end{align*}
This, along with $\|\grad f_t(x_t) \|_* \leq G$ implies
\begin{equation}\label{eqn:regret-strong-convexity-onestep}
\la \grad f_t(x_t), x_t - x \ra - \alpha V_{x_t}(x) \leq \frac{1}{\gamma_t} V_{x_t}(x) - \frac{1}{\gamma_t} V_{x_{t+1}}(x) - \alpha V_{x_t}(x) + \frac{\gamma_t G^2}{2}.
\end{equation}
Multiplying \eqref{eqn:regret-strong-convexity-onestep} by $\theta_t$ and summing over $t \in [T]$ establishes the inequalities below. 
\[
\sum_{t=1}^T \theta_t \left( \grad f_t(x_t), x_t - x \ra - \alpha V_{x_t}(x) \right) \leq \sum_{t=1}^{T} \theta_t\!\! \left(\!\frac{1}{\gamma_t}\! V_{x_t}(x) \!-\! \frac{1}{\gamma_t} \!V_{x_{t+1}}(x) \!-\! \alpha V_{x_t}(x) \!+\! \frac{\gamma_t G^2}{2}\!\right). \label{eqn:regret-strong-convexity-intermediate}
\]
Now, when $\gamma_t = \frac{2}{\alpha\,(t+1)}$, we arrive at
\[ 
\frac{1}{\gamma_t} V_{x_t}(x) - \frac{1}{\gamma_t} V_{x_{t+1}}(x) - \alpha V_{x_t}(x) + \frac{\gamma_t G^2}{2} 
= \frac{\alpha\,(t-1)}{2}V_{x_t}(x) - \frac{\alpha\,(t+1)}{2}V_{x_{t+1}}(x) + \frac{G^2}{\alpha\,(t+1)}. 
\]
Multiplying this by $t$ gives us
\[t\left( \frac{1}{\gamma_t} V_{x_t}(x) - \frac{1}{\gamma_t} V_{x_{t+1}}(x) - \alpha V_{x_t}(x) + \frac{\gamma_t G^2}{2} \right) \leq \frac{\alpha\,(t-1)t}{2} V_{x_t}(x) - \frac{\alpha\, t(t+1)}{2} V_{x_{t+1}}(x) + \frac{G^2}{\alpha}.\]
After summing this over $t\in[T]$ and noting that the first two terms telescope, the coefficient in front of $V_{x_1}(x)$ is zero, and $V_{x_{T+1}}(x)\geq 0$, we deduce
\[
\sum_{t=1}^{T} t\left( \frac{1}{\gamma_t} V_{x_t}(x) - \frac{1}{\gamma_t} V_{x_{t+1}}(x) - \alpha V_{x_t}(x) + \frac{\gamma_t G^2}{2} \right) \leq \frac{G^2 T}{\alpha} - \frac{\alpha\, T(T+1)}{2}  V_{x_{T+1}}(x) \leq \frac{G^2 T}{\alpha}.
\]
Dividing both sides of this inequality by ${T(T+1)\over 2}$ leads to
\[
\sum_{t=1}^{T} \theta_t \left(\frac{1}{\gamma_t} V_{x_t}(x) - \frac{1}{\gamma_t} V_{x_{t+1}}(x) - \alpha V_{(x_t)}(x) + \frac{\gamma_t G^2}{2}\right) \leq \frac{2 G^2}{\alpha\,(T+1)}, 
\]
which establishes the second inequality. 
\end{proof}

Let us revisit Remark~\ref{rem:Omega} on customizing the proximal setup based on the geometry of the domain.
\begin{remark}\label{rem:stronglyConvexOmega}
In contrast to Theorem~\ref{thm:OCO-non-smooth}, the bound of Theorem~\ref{thm:OCO-strongly-convex} has no dependence on set width $\Omega$.  Nevertheless, customization of the proximal setup, in particular selection of {\dgf} $\omega$ plays an important role in Theorem~\ref{thm:OCO-strongly-convex} through Assumption~\ref{ass:OCO-strong-convex}. In many cases, it is much more likely to encounter functions $f_t$ that are $\alpha$-strongly convex in the usual sense, i.e., $f_t(x) - \alpha \|x\|_2^2/2$ is convex, but it may not be possible to ensure the convexity of $f_t(x) - \alpha\, \omega(x)$ with respect to a different  {\dgf} $\omega$. 
In such cases, it is possible (and more desirable) to select a {\dgf} $\omega$ that will ensure that the strong convexity requirement of Assumption~\ref{ass:OCO-strong-convex} is satisfied. Because the bound of Theorem~\ref{thm:OCO-strongly-convex} has no dependence on $\Omega$, such a selection of $\omega$ will not adversely affect overall the weighted regret bound of Theorem~\ref{thm:OCO-strongly-convex}. 
\epr
\end{remark}

\begin{remark}\label{rem:stronglyConvDiscussion}
For strongly convex losses, Theorem~\ref{thm:OCO-strongly-convex} establishes an upper bound of $O(1/T)$ on weighted regret. In contrast to this, Hazan and Kale \cite{HazanKale2014} established a lower bound of $O(\log(T)/T)$ for minimizing standard regret in OCO with strongly convex loss functions. The main distinguishing feature of  \cite{HazanKale2014} and our result in Theorem~\ref{thm:OCO-strongly-convex} is that while \cite{HazanKale2014} considers the case of using uniform weights $\theta_t = 1/T$ only, we are allowed to use nonuniform (in fact increasing) weights $\theta_t = 2t/(T^2 + T)$. The faster rate of $O(1/T)$ in Theorem~\ref{thm:OCO-strongly-convex} is a result of this flexibility in our setup due to the weighted regret concept that lets us choose nonuniform weights.
\epr
\end{remark}

\subsubsection{Weighted Online SP gap}\label{sec:non-smooth-SPgap}

Algorithm~\ref{alg:md} can also be utilized in bounding the weighted online SP gap \eqref{eqn:onlineSP-gap}. In this case, in addition to a convex-concave structure assumption on functions $\phi_t(x,y)$, we assume boundedness of specific monotone gradient operators associated with $\phi_t(x,y)$.
\begin{assumption}\label{ass:OnlineSP-non-smooth}
A proximal setup of Section~\ref{sec:prox-setup} exists for the domain $\cZ = X \times Y$. Each function $\phi_t(x,y)$ is convex in $x$ and concave in $y$, and there exists $G \in(0,\infty)$ such that \sloppy $\left\|[\grad_x \phi_t(x,y); -\grad_y \phi_t(x,y) ] \right\|_* \leq G$ for all $x \in X$, $y \in Y$ and $t \in [T]$. 
\end{assumption}

\begin{theorem}\label{thm:OnlineSP-non-smooth}
Suppose Assumption~\ref{ass:OnlineSP-non-smooth} holds, and we are given convex combination weights $\theta \in \Delta_T$. 
Then running Algorithm~\ref{alg:md} with $z_t = [x_t; y_t]$, $\xi_t = \theta_t [\grad_x \phi_t(x_t,y_t); -\grad_y \phi_t(x_t,y_t) ]$, and step sizes $\gamma_t = \sqrt{\frac{2 \Omega}{\sup_{t \in [T]} \theta_t^2 G^2 T}}$ for all $t\in[T]$ gives us 
\[ \sup_{y \in Y} \sum_{t=1}^{T} \theta_t \phi_t(x_t,y) - \inf_{x \in X} \sum_{t=1}^{T} \theta_t \phi_t(x,y_t) \leq \sqrt{2 \Omega \left(\sup_{t \in [T]} \theta_t^2\right) G^2 T}. \]
\end{theorem}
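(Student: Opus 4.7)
The plan is to mirror the proof of Theorem~\ref{thm:OCO-non-smooth} but replace the single-function subgradient inequality at the end with the convex-concave chain of inequalities that was already carried out in Section~\ref{sec:SPOCO}. Concretely, I will first bound the weighted affine regret of the monotone operators $F_t(x,y) = [\grad_x \phi_t(x,y); -\grad_y \phi_t(x,y)]$ using the one-step Mirror Descent property in Proposition~\ref{prop:md-convergence}, and then invoke the convex-concave structure of $\phi_t$ to conclude that this affine regret upper bounds the weighted online SP gap on the left-hand side of the theorem.

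More precisely, I would first apply Proposition~\ref{prop:md-convergence} with $z = [x;y] \in \cZ$, $z_t = [x_t; y_t]$, and $\xi_t = \theta_t F_t(z_t)$ to obtain
\[ \gamma_t \la \xi_t, z_t - z \ra \leq V_{z_t}(z) - V_{z_{t+1}}(z) + \tfrac{1}{2} \gamma_t^2 \|\xi_t\|_*^2 \]
for each $t \in [T]$. Summing this inequality over $t \in [T]$ with the constant step size $\gamma_t = \gamma$ lets the Bregman terms telescope, so after using $V_{z_1}(z) \leq \Omega$ (as $z_1 = z_\omega$) and $V_{z_{T+1}}(z) \geq 0$, and then dividing through by $\gamma$, I obtain
\[ \sum_{t=1}^{T} \theta_t \la F_t(z_t), z_t - z \ra \leq \frac{\Omega}{\gamma} + \frac{\gamma}{2} \sum_{t=1}^T \theta_t^2 \|F_t(z_t)\|_*^2 \leq \frac{\Omega}{\gamma} + \frac{\gamma}{2} \bigl(\sup_{t \in [T]} \theta_t^2\bigr) G^2 T, \]
where the last step uses Assumption~\ref{ass:OnlineSP-non-smooth}. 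Taking the supremum over $z \in \cZ$ on the left and optimizing the right-hand side in $\gamma$ recovers exactly the choice $\gamma = \sqrt{2\Omega / (\sup_t \theta_t^2 G^2 T)}$ and yields the upper bound $\sqrt{2\Omega (\sup_{t\in[T]} \theta_t^2) G^2 T}$ on the weighted affine regret.

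Finally, I would invoke the chain of inequalities already derived in Section~\ref{sec:SPOCO}: by splitting the affine regret into its $x$- and $y$-components and applying the subgradient inequality for convex $\phi_t(\cdot, y_t)$ and concave $\phi_t(x_t, \cdot)$, one obtains
\[ \sup_{z \in \cZ} \sum_{t=1}^T \theta_t \la F_t(z_t), z_t - z \ra \;\geq\; \sup_{y \in Y} \sum_{t=1}^T \theta_t \phi_t(x_t,y) - \inf_{x \in X} \sum_{t=1}^T \theta_t \phi_t(x,y_t). \]
Chaining this with the affine regret bound finishes the proof. There is no real obstacle here: the only subtle point is ensuring that the $F_t$-based affine regret bound is valid in the product proximal setup on $\cZ = X \times Y$ (which is guaranteed by Section~\ref{sec:prox-setup}), and that the same norm $\|\cdot\|_*$ that appears in Assumption~\ref{ass:OnlineSP-non-smooth} is the one used to bound $\|\xi_t\|_*$ in Proposition~\ref{prop:md-convergence}. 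Both are immediate from the hypotheses, so the proof is essentially a direct adaptation of Theorem~\ref{thm:OCO-non-smooth} using the SP reduction already established.
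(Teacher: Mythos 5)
Your proposal is correct and follows essentially the same route as the paper: the paper's proof likewise runs the Mirror Descent telescoping argument of Theorem~\ref{thm:OCO-non-smooth} on the affine regret of $\xi_t = \theta_t F_t(z_t)$ and then lower-bounds that affine regret by the weighted online SP gap via the convex-concave subgradient inequalities, exactly as you do. The only difference is presentational --- you write out the summation/optimization over $\gamma$ explicitly where the paper simply cites the earlier proof.
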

\begin{proof}
The proof proceeds exactly as the proof of Theorem~\ref{thm:OCO-non-smooth} to arrive at
\[ \sum_{t=1}^{T} \la \xi_t, z_t - z \ra \leq \sqrt{2 \Omega \left(\sup_{t \in [T]} \theta_t^2\right) G^2 T}\]
for all $z = [x;y] \in X \times Y$. Then, from the convex-concave structure of the function $\phi_t$, we have for all $z = [x;y] \in X \times Y$ and all $t\in[T]$, 
\begin{align*}
\la \xi_t, z_t - z \ra &= \theta_t \la \grad_x \phi_t(x_t,y_t), x_t - x \ra + \theta_t \la \grad_y \phi_t(x_t,y_t), y - y_t \ra\\
&\geq \theta_t (\phi_t(x_t,y_t) - \phi_t(x,y_t)) + \theta_t ( \phi_t(x_t,y) - \phi_t(x_t,y_t) )\\
&= \theta_t \phi_t(x_t,y) - \theta_t \phi_t(x,y_t).
\end{align*}
The result then follows by combining the inequality above with the inequality that provides the upper bound on the term $\sum_{t=1}^T \la \xi_t, z_t - z \ra$. 
\end{proof}

\begin{remark}\label{rem:nonuniformT1}
Uniform weights $\theta_t = 1/T$ minimize $\sup_{t\in[T]} \theta_t$ and result in a regret (online SP gap) bound of $O(1/\sqrt{T})$ in Theorem~\ref{thm:OCO-non-smooth} (Theorem~\ref{thm:OnlineSP-non-smooth}). Moreover, Theorems~\ref{thm:OCO-non-smooth}~and~\ref{thm:OnlineSP-non-smooth} can accommodate a variety of convex combination weights $\theta\in\Delta_T$ via adapting their step sizes $\gamma_t$ and still achieve bounds of form $O(1/\sqrt{T})$. For example, this is the case when the nonuniform weights $\theta_t = 2t/(T^2 + T)$ from Theorem~\ref{thm:OCO-strongly-convex} are used in these results. Employing nonuniform weights becomes more consequential when we have to run several OCO or online SP algorithms in conjunction with each other using the \emph{same} weights $\theta_t$ in all of them. Such a situation arises in solving robust feasibility problems, which we discuss in Section~\ref{sec:RO-app}.
\epr
\end{remark}

\subsection{Exploiting Lookahead and Smoothness}\label{sec:smooth}

In offline convex optimization when minimizing a \emph{smooth} convex function 
over a convex domain, the Mirror Prox algorithm of \cite{Nemirovski2005} admits a better convergence rate than Mirror Descent and is thus preferable. In this section we demonstrate that the same improvement is also attainable in an online setting when our functions exhibit a smooth structure and our setting allows for \emph{1-lookahead}---that is, we are allowed to a \emph{limited} query access to our current function $f_t$ at time period $t$ before we make our decision $z_t$. In fact, we query $f_t$ only once in each period $t$.

As discussed in the Introduction, 1-lookahead setting may prevent it being applicable in certain online settings. In addition, if at iteration $t$ we are given multiple query access to $f_t$ (or $\phi_t$), we can \emph{guarantee} that the weighted regret (online SP gap) will be non-positive by directly minimizing $f_t$ (solving for the SP of $\phi_t$). However, solving a complete optimization problem at each iteration may be expensive, and hence even in the situations where we have multiple query access to $f_t$ at iteration $t$, it may be preferable to use our more efficient methods to bound the weighted regret (online SP gap). We present an example of such a situation, solving robust feasibility problems, in the next section. 

Our analysis is based on the generalization of Mirror Prox outlined in Algorithm~\ref{alg:mp}. 
\begin{algorithm}[h!t]
\caption{Generalized Mirror Prox}\label{alg:mp}
\begin{algorithmic}
\STATE {\bf input:~} $\omega$-center $z_\omega$, time horizon $T$, positive step sizes $\{\gamma_t\}_{t=1}^T$, and sequences $\{\eta_t,\xi_t\}_{t=1}^T$.
\STATE {\bf output:~} sequence $\{z_t\}_{t=1}^T$.
\STATE $v_1 := z_\omega$\;
\FOR{$t=1,\ldots,T$}
	\STATE $z_t = \Prox_{v_t} (\gamma_t \eta_t)$. \label{alg:first_prox}
	\STATE $v_{t+1} = \Prox_{v_t}  (\gamma_t \xi_t)$.  \label{alg:second_prox}
\ENDFOR
\end{algorithmic}
\end{algorithm}

Proposition~\ref{prop:mp-convergence} states a fundamental property of Mirror Prox updates which is instrumental in the derivation of our bounds. Its proof can be found in \cite[Lemma 6.2 and Proposition 6.1]{JuditNem2012Pt2}, which we reproduce here for completeness.
\begin{proposition}\label{prop:mp-convergence}
Suppose that the sequences of vectors $\{v_t,z_t\}_{t=1}^T$ are generated by Algorithm~\ref{alg:mp} for the given sequences $\{\eta_t,\xi_t\}_{t=1}^T$ and step sizes $\gamma_t>0$ for $t\in [T]$. Then for any $z \in \cZ$ and $t\in[T]$, we have
\[ \gamma_t \la \xi_t, z_t - z \ra \leq V_{v_t}(z) - V_{v_{t+1}}(z) + \frac{1}{2} \left( \gamma_t^2 \| \xi_t - \eta_t\|_*^2 - \|z_t - v_t\|^2 \right). \]
\end{proposition}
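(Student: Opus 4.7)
The plan is to mimic the proof of Proposition~\ref{prop:md-convergence} but to use both prox steps of Algorithm~\ref{alg:mp} in tandem, relying on the three-point identity for Bregman distances as the main workhorse. Recall that for any $a,b,c\in\cZ$ with $a,b\in\cZ^\circ$, the identity
\[
V_a(b) + V_b(c) - V_a(c) = \langle \grad\omega(a) - \grad\omega(b),\, c-b\rangle
\]
follows directly from the definition of $V$. I also will use the first-order optimality condition for a prox-mapping $w = \Prox_u(\zeta)$, namely $\langle \zeta + \grad\omega(w) - \grad\omega(u),\, z - w\rangle \geq 0$ for all $z\in\cZ$, which is derived exactly as in the proof of Proposition~\ref{prop:md-convergence}.

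First, I would apply the optimality condition to $v_{t+1} = \Prox_{v_t}(\gamma_t\xi_t)$ with the test point $z$ and then invoke the three-point identity with $a=v_t$, $b=v_{t+1}$, $c=z$ to obtain
\[
\gamma_t\langle \xi_t,\, v_{t+1}-z\rangle \;\leq\; V_{v_t}(z) - V_{v_{t+1}}(z) - V_{v_t}(v_{t+1}).
\]
Next, I would apply the optimality condition to $z_t = \Prox_{v_t}(\gamma_t\eta_t)$, but now with the test point $v_{t+1}\in\cZ$, and use the three-point identity with $a=v_t$, $b=z_t$, $c=v_{t+1}$ to get
\[
\gamma_t\langle \eta_t,\, z_t-v_{t+1}\rangle \;\leq\; V_{v_t}(v_{t+1}) - V_{v_t}(z_t) - V_{z_t}(v_{t+1}).
\]

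The heart of the argument is to then split
\[
\gamma_t\langle \xi_t,\, z_t-z\rangle = \gamma_t\langle \xi_t,\, v_{t+1}-z\rangle + \gamma_t\langle \xi_t - \eta_t,\, z_t-v_{t+1}\rangle + \gamma_t\langle \eta_t,\, z_t-v_{t+1}\rangle,
\]
and to bound the middle term using the dual norm via $\gamma_t\langle\xi_t-\eta_t,z_t-v_{t+1}\rangle \leq \gamma_t\|\xi_t-\eta_t\|_*\|z_t-v_{t+1}\|$. Combining the two optimality-based inequalities with this decomposition, the term $V_{v_t}(v_{t+1})$ cancels and we are left with
\[
\gamma_t\langle \xi_t, z_t - z\rangle \;\leq\; V_{v_t}(z) - V_{v_{t+1}}(z) - V_{v_t}(z_t) - V_{z_t}(v_{t+1}) + \gamma_t\|\xi_t-\eta_t\|_* \|z_t-v_{t+1}\|.
\]

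The main obstacle is to now absorb the cross term $\gamma_t\|\xi_t-\eta_t\|_*\|z_t-v_{t+1}\|$ and produce both the $\gamma_t^2\|\xi_t-\eta_t\|_*^2$ and the $-\|z_t-v_t\|^2$ contributions from the statement. For this I would invoke strong convexity of $\omega$ twice: it gives $V_{z_t}(v_{t+1}) \geq \tfrac12\|z_t-v_{t+1}\|^2$ and $V_{v_t}(z_t)\geq \tfrac12\|z_t-v_t\|^2$. The first estimate, together with Young's inequality $ab - \tfrac12 b^2 \leq \tfrac12 a^2$ applied to $a=\gamma_t\|\xi_t-\eta_t\|_*$ and $b=\|z_t-v_{t+1}\|$, yields $\gamma_t\|\xi_t-\eta_t\|_*\|z_t-v_{t+1}\| - V_{z_t}(v_{t+1}) \leq \tfrac12\gamma_t^2\|\xi_t-\eta_t\|_*^2$. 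The second estimate handles $-V_{v_t}(z_t) \leq -\tfrac12\|z_t-v_t\|^2$. Substituting both into the displayed inequality above delivers exactly the claimed bound.
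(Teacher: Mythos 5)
Your proposal is correct and follows essentially the same route as the paper's proof: the two prox-optimality conditions (the second one tested at $v_{t+1}$), the cancellation of $V_{v_t}(v_{t+1})$, and the final absorption of the cross term via strong convexity of $\omega$ and Young's inequality are all identical in substance. The only difference is cosmetic—you package the gradient-difference inner products through an explicit three-point identity, while the paper writes out the same equalities directly.
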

\begin{proof}
Recall that
\begin{align*}
z_t = \Prox_{v_t}(\gamma_t \eta_t) &= \argmin_{z \in \CZ} \left\{ \la \gamma_t \eta_t - \grad \omega(v_t), z \ra + \omega(z) \right\}\\
v_{t+1} = \Prox_{v_t}(\gamma_t \xi_t) &= \argmin_{z \in \CZ} \left\{ \la \gamma_t \xi_t - \grad \omega(v_t), z \ra + \omega(z) \right\}.
\end{align*}
Using the same optimality condition proved in Proposition \ref{prop:md-convergence}, we have for all $z \in \CZ$
\begin{align*}
\la \gamma_t \eta_t - \grad \omega(v_t) + \grad \omega(z_t), z - z_t \ra &\geq 0\\
\la \gamma_t \xi_t - \grad \omega(v_t) + \grad \omega(v_{t+1}), z - v_{t+1} \ra &\geq 0.
\end{align*}
Rearranging the second inequality, we see that
\begin{align*}
\gamma_t \la \xi_t, z_t - z \ra &\leq \gamma_t \la \xi_t, z_t - v_{t+1} \ra + \la \grad \omega(v_{t+1}) - \grad \omega(v_t), z - v_{t+1} \ra\\
&= \gamma_t \la \xi_t, z_t - v_{t+1} \ra + V_{v_t}(z) - V_{v_{t+1}}(z) - V_{v_t}(v_{t+1}).
\end{align*}
Substituting $z = v_{t+1}$ into the first inequality gives
\begin{align*}
\gamma_t \la \xi_t, z_t - v_{t+1} \ra &\leq \gamma_t \la \xi_t - \eta_t, z_t - v_{t+1} \ra + \la \grad \omega(z_t) - \grad \omega(v_t), v_{t+1} - z_t \ra\\
&= \gamma_t \la \xi_t - \eta_t, z_t - v_{t+1} \ra + V_{v_t}(v_{t+1}) - V_{z_t}(v_{t+1}) - V_{v_t}(z_t).
\end{align*}
Combining the previous two inequalities, we have for all $z \in \CZ$
\begin{align*}
\gamma_t \la \xi_t, z_t - z \ra &\leq \gamma_t \la \xi_t - \eta_t, z_t - v_{t+1} \ra + V_{v_t}(z) - V_{v_{t+1}}(z) - V_{z_t}(v_{t+1}) - V_{v_t}(z_t)\\
&\leq V_{v_t}(z) - V_{v_{t+1}}(z) + \gamma_t \|\xi_t - \eta_t \|_* \|z_t - v_{t+1}\| - \frac{1}{2} \|z_t - v_{t+1}\|^2 - \frac{1}{2} \|z_t - v_t\|^2,
\end{align*}
where the second inequality follows by Cauchy-Schwarz and strong convexity of $\omega$. The result now follows by recognizing that for any $s \geq 0$, $\gamma_t \|\xi_t - \eta_t \|_* s - s^2/2 \leq \gamma_t^2 \|\xi - \eta_t\|_*^2/2$.
\end{proof}

We analyze Algorithm~\ref{alg:mp} under the following smoothness assumption and derive an improved rate of convergence for minimizing weighted regret.
\begin{assumption}\label{ass:OCO-smooth}
A proximal setup of Section~\ref{sec:prox-setup} exists for the domain $\cZ = X$. Each function $f_t(x)$ is convex in $x$, and there exists $L \in(0,\infty)$ such that 
$ 
\left\| \grad f_t(x) - \grad f_t(v) \right\|_* \leq L \|x - v\| 
$ 
holds for all $x,v \in X$ and all $t \in [T]$. 
\end{assumption}
\begin{theorem}\label{thm:OCO-smooth}
Suppose Assumption~\ref{ass:OCO-smooth} holds, and we are given weights $\theta \in \Delta_T$. Then running Algorithm~\ref{alg:mp} with $z_t = x_t$, $\eta_t = \theta_t \grad f_t(v_t)$, $\xi_t = \theta_t \grad f_t(z_t)$, and step sizes $\gamma_t = {1\over\left( L \sup_{t \in T} \theta_t \right)}$ for all $t\in[T]$ leads to  
\[ \sum_{t=1}^{T} \theta_t f_t(x_t) - \inf_{x \in X} \sum_{t=1}^{T} \theta_t f_t(x) \leq \Omega L \sup_{t \in [T]} \theta_t. \]
\end{theorem}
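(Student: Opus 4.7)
The plan is to invoke Proposition~\ref{prop:mp-convergence} with the specified choices of $\eta_t$ and $\xi_t$, then show that the smoothness assumption together with the step size $\gamma_t = 1/(L \sup_{t \in [T]} \theta_t)$ makes the quadratic error term $\gamma_t^2 \|\xi_t - \eta_t\|_*^2 - \|z_t - v_t\|^2$ nonpositive, so we are left only with the telescoping Bregman terms. Then convexity of $f_t$ converts an affine-regret bound into the desired weighted-regret bound.

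More concretely, the first step is to observe that with $\eta_t = \theta_t \grad f_t(v_t)$ and $\xi_t = \theta_t \grad f_t(z_t) = \theta_t \grad f_t(x_t)$, Assumption~\ref{ass:OCO-smooth} yields
\[
\|\xi_t - \eta_t\|_* = \theta_t \|\grad f_t(z_t) - \grad f_t(v_t)\|_* \leq \theta_t L \|z_t - v_t\|.
\]
Plugging $\gamma_t = 1/(L \sup_{s \in [T]} \theta_s)$ gives $\gamma_t \theta_t L \leq 1$, hence $\gamma_t^2 \|\xi_t - \eta_t\|_*^2 \leq \|z_t - v_t\|^2$, and therefore Proposition~\ref{prop:mp-convergence} collapses to
\[
\gamma_t \la \xi_t, z_t - z \ra \leq V_{v_t}(z) - V_{v_{t+1}}(z) \qquad \forall z \in X.
\]

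Next, because $\gamma_t$ is the same constant $\gamma := 1/(L \sup_{t} \theta_t)$ for every $t$, I divide through by $\gamma$ and sum over $t \in [T]$ to obtain, for every $z \in X$,
\[
\sum_{t=1}^{T} \theta_t \la \grad f_t(x_t), x_t - z \ra \leq L \Bigl(\sup_{t \in [T]} \theta_t\Bigr) \bigl( V_{v_1}(z) - V_{v_{T+1}}(z) \bigr) \leq \Omega L \sup_{t \in [T]} \theta_t,
\]
using $v_1 = z_\omega$ so that $V_{v_1}(z) \leq \Omega$ and $V_{v_{T+1}}(z) \geq 0$. Finally, convexity of each $f_t$ gives $\theta_t(f_t(x_t) - f_t(z)) \leq \theta_t \la \grad f_t(x_t), x_t - z \ra$, and taking an infimum over $z \in X$ produces the claimed bound.

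There is no substantive obstacle here; the content is entirely in matching the terms so that smoothness cancels the $-\|z_t - v_t\|^2$ penalty. The only thing to be careful about is noting that the anticipatory nature of the algorithm is exactly what lets us use $\grad f_t(v_t)$ in $\eta_t$, i.e., a query to the current function $f_t$ before committing to $z_t$; without this 1-lookahead we could not take $\eta_t$ to depend on $f_t$, and the crucial inequality $\|\xi_t - \eta_t\|_* \leq \theta_t L \|z_t - v_t\|$ would be unavailable.
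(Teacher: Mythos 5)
Your proposal is correct and follows essentially the same argument as the paper: apply Proposition~\ref{prop:mp-convergence}, use smoothness and the choice $\gamma_t = 1/(L\sup_t\theta_t)$ to kill the term $\gamma_t^2\|\xi_t-\eta_t\|_*^2 - \|z_t-v_t\|^2$, telescope the Bregman distances with $V_{v_1}(z)\leq\Omega$ and $V_{v_{T+1}}(z)\geq 0$, and finish with the subgradient inequality. Your closing remark on the role of 1-lookahead matches the paper's own discussion in Remark~\ref{rem:anticipativity}.
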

\begin{proof}
From Assumption~\ref{ass:OCO-smooth},  we have for all $t\in[T]$
\[ \|\xi_t - \eta_t\|_* = \theta_t \|\grad f_t(x_t) - \grad f_t(v_t)\|_* \leq L\, \theta_t \|x_t - v_t\| \leq L \sup_{t \in [T]} \theta_t \|x_t - v_t\|. \]
Thus, by setting $\gamma_t = {1\over \left( L \sup_{t \in [T]} \theta_t \right)}$, we deduce $\gamma_t^2 \| \xi_t - \eta_t\|_*^2 - \|x_t - v_t\|^2 \leq 0$ for all $t\in[T]$. Then from Proposition~\ref{prop:mp-convergence} we obtain for all $x \in X$ and $t\in[T]$ 
\[ \la \xi_t, x_t - x \ra = \theta_t \la \grad f_t(x_t), x_t - x \ra \leq \left(V_{v_t}(x) - V_{v_{t+1}}(x)\right) L \sup_{t \in [T]} \theta_t. \]
Summing this inequality over $t\in [T]$ and using $V_{v_1}(x) \leq \Omega$, $V_{v_{T+1}}(x) \geq 0$, we get 
\[ \sum_{t=1}^{T} \la \xi_t, x_t - x \ra = \sum_{t=1}^{T} \theta_t \la \grad f_t(x_t), x_t - x \ra \leq \Omega L \sup_{t \in [T]} \theta_t.\]
The result then follows from convexity of $f_t$ and using the subgradient inequality $\la \grad f_t(x_t), x_t - x \ra \geq f_t(x_t) - f_t(x)$. 
\end{proof}

A similar result holds for the online SP gap under the following analogous smoothness assumption.
\begin{assumption}\label{ass:OnlineSP-smooth}
A proximal setup of Section~\ref{sec:prox-setup} exists for the domain $\cZ = X \times Y$, and we denote $z = [x;y]$. Each function $\phi_t(x,y)$ is convex in $x$ and concave in $y$. Denoting $F_t(z) = [\grad_x \phi_t(x,y); -\grad_y \phi_t(x,y) ]$, there exists $L \in(0,\infty)$ such that for all $v,z \in \cZ$ and all $t \in [T]$, we have
\[ \left\| F_t(z) - F_t(v) \right\|_* \leq L \|z - v\|. \]
\end{assumption}

\begin{remark}\label{rem:Lipschitz}
A sufficient condition for the Lipschitz continuity of monotone gradient operators $F_t$ of Assumption~\ref{ass:OnlineSP-smooth} is Lipschitz continuity of their partial subgradients. For brevity, we omit the proof of this; see  \cite{JuditNem2012Pt2,Nemirovski2005} for further details.
\epr
\end{remark}

\begin{theorem}\label{thm:OnlineSP-smooth}
Suppose Assumption~\ref{ass:OnlineSP-smooth} holds, and we are given weights $\theta \in \Delta_T$. Then running Algorithm~\ref{alg:mp} with $z_t = [x_t; y_t]$, $\eta_t = \theta_t F_t(v_t)$, $\xi_t = \theta_t F_t(z_t)$, and step sizes $\gamma_t = {1\over\left( L \sup_{t \in T} \theta_t \right)}$ for all $t\in[T]$ leads to 
\[ \sup_{y \in Y} \sum_{t=1}^{T} \theta_t \phi_t(x_t,y) - \inf_{x \in X} \sum_{t=1}^{T} \theta_t \phi_t(x,y_t) \leq \Omega L \sup_{t \in [T]} \theta_t. \]
\end{theorem}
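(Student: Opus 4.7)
The plan is to combine two ingredients already developed in the paper: the Mirror Prox analysis leading to Theorem~\ref{thm:OCO-smooth}, and the convex-concave conversion from affine regret to weighted online SP gap used in the proof of Theorem~\ref{thm:OnlineSP-non-smooth}. The algorithmic choice $z_t = [x_t;y_t]$, $\eta_t = \theta_t F_t(v_t)$, $\xi_t = \theta_t F_t(z_t)$ means we are running Algorithm~\ref{alg:mp} on the joint domain $\cZ = X \times Y$ with the monotone operator playing the role of a subgradient, so Proposition~\ref{prop:mp-convergence} applies verbatim.

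First I would use Assumption~\ref{ass:OnlineSP-smooth} to control the mismatch between $\xi_t$ and $\eta_t$:
\[
\|\xi_t - \eta_t\|_* = \theta_t \|F_t(z_t) - F_t(v_t)\|_* \leq L\, \theta_t \|z_t - v_t\| \leq L \left(\sup_{s \in [T]} \theta_s\right) \|z_t - v_t\|.
\]
With the prescribed step size $\gamma_t = 1/(L \sup_{s} \theta_s)$, this yields $\gamma_t^2 \|\xi_t - \eta_t\|_*^2 - \|z_t - v_t\|^2 \leq 0$, so the residual term in Proposition~\ref{prop:mp-convergence} is non-positive. Consequently, for every $z \in \cZ$,
\[
\gamma_t \la \xi_t, z_t - z \ra \leq V_{v_t}(z) - V_{v_{t+1}}(z).
\]

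Next I would divide by $\gamma_t$ (which is constant in $t$), sum over $t \in [T]$, and telescope the Bregman terms. Using $V_{v_1}(z) \leq \Omega$ by the choice $v_1 = z_\omega$ and $V_{v_{T+1}}(z) \geq 0$, this gives
\[
\sum_{t=1}^T \theta_t \la F_t(z_t), z_t - z \ra \leq \Omega L \sup_{t \in [T]} \theta_t \quad \text{for all } z \in \cZ.
\]

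Finally I would convert this affine bound into the desired SP gap bound by repeating the convex-concave calculation from the proof of Theorem~\ref{thm:OnlineSP-non-smooth}. Writing $z = [x;y]$, expanding $F_t(z_t) = [\grad_x \phi_t(x_t,y_t); -\grad_y \phi_t(x_t,y_t)]$ and applying the subgradient inequalities in each variable gives
\[
\la F_t(z_t), z_t - z \ra = \la \grad_x \phi_t(x_t,y_t), x_t - x \ra + \la \grad_y \phi_t(x_t,y_t), y - y_t \ra \geq \phi_t(x_t,y) - \phi_t(x,y_t).
\]
Multiplying by $\theta_t \geq 0$, summing, and taking the supremum over $[x;y] \in X \times Y$ on the left-hand side yields the claimed bound. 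No real obstacle appears: the work is almost mechanical, the only subtlety being to note that the step-size analysis and telescoping are carried out on the joint domain $\cZ = X \times Y$ (with its product proximal setup from Section~\ref{sec:prox-setup}), and the convex-concave structure is only used at the very end to pass from the affine inner-product sum to the online SP gap.
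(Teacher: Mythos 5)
Your proposal is correct and follows essentially the same route as the paper's own proof: the paper likewise invokes the argument of Theorem~\ref{thm:OCO-smooth} on the joint domain $\cZ = X\times Y$ (step-size choice killing the residual in Proposition~\ref{prop:mp-convergence}, then telescoping with $V_{v_1}(z)\leq\Omega$) to get the affine bound $\sum_{t}\theta_t\la F_t(z_t),z_t-z\ra\leq\Omega L\sup_{t}\theta_t$, and then passes to the SP gap exactly as in Theorem~\ref{thm:OnlineSP-non-smooth} via the convex-concave subgradient inequalities. No gaps.
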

\begin{proof}
Following the outline of the proof of Theorem~\ref{thm:OCO-smooth}, we obtain
\[ \sum_{t=1}^{T} \la \xi_t, z_t - z \ra = \sum_{t=1}^{T} \theta_t \la F_t(z_t), z_t - z \ra \leq \Omega L \sup_{t \in [T]} \theta_t \]
for all $z = [x;y] \in X \times Y$. As in the proof of Theorem~\ref{thm:OnlineSP-non-smooth}, using the convex-concave structure of the functions $\phi_t$, we arrive at 
\[\theta_t \la F_t(z_t), z_t - z \ra \geq \theta_t \phi_t(x_t,y) - \theta_t \phi_t(x,y_t),\]
which establishes the result. 
\end{proof}

\begin{remark}\label{rem:nonuniformT2}
As discussed in Remark~\ref{rem:nonuniformT1}, when the convex combination weights $\theta_t$ are set to be either uniform weights $\theta_t = 1/T$ or nonuniform weights $\theta_t = 2t/(T^2 + T)$ from Theorem~\ref{thm:OCO-strongly-convex}, we have $ \sup_{t \in [T]} \theta_t=O(1/T)$, and thus we achieve a better weighted regret (online SP gap) bound of $O(1/T)$ in Theorem~\ref{thm:OCO-smooth} (Theorem~\ref{thm:OnlineSP-smooth}) than the $O(1/\sqrt{T})$ bound of Theorem~\ref{thm:OCO-non-smooth} (Theorem~\ref{thm:OnlineSP-non-smooth}). 
\epr
\end{remark}

There is a fundamental distinction between Algorithms~\ref{alg:md}~and
~\ref{alg:mp} in terms of their anticipatory/non-anticipatory behavior. \sloppy This distinction between anticipatory/non-anticipatory behavior is important in the context of using these algorithms for coupled optimization problems. We discuss this next. 
\begin{remark}\label{rem:anticipativity}
When Algorithm~\ref{alg:mp} is utilized in Theorems~\ref{thm:OCO-smooth}~and~\ref{thm:OnlineSP-smooth}, at step $t$, in order to compute the decision $z_t = \Prox_{v_t}(\gamma_t \eta_t)$, where $v_t \in \cZ$ is a point computed in the previous step, we utilize the knowledge of the current function $f_t$ or $\phi_t$ because $\eta_t = \theta_t \grad f_t(v_t)$ or $\eta_t = \theta_t F_t(v_t)$. Therefore, Algorithm~\ref{alg:mp} is categorized as \emph{1-lookahead} or \emph{anticipatory}. This is in contrast to the non-anticipatory nature of Algorithm~\ref{alg:md} analyzed in Theorems~\ref{thm:OCO-non-smooth},~\ref{thm:OCO-strongly-convex},~and~\ref{thm:OnlineSP-non-smooth}, where computing $z_t = \Prox_{z_{t-1}}(\gamma_{t-1} \xi_{t-1})$ only required knowledge of the previous step $t-1$ because $\xi_{t-1}$ was determined based on only $\grad f_{t-1}(z_{t-1})$ or $F_{t-1}(z_{t-1})$. 
\epr
\end{remark}

\begin{remark}\label{rem:predictable-sequence}
Rakhlin and Sridharan \cite{RakhlinSridharan2013a,RakhlinSridharan2013b} also explore OCO  with anticipatory decisions through the lens of \emph{predictable sequences} $\{M_t\}_{t=1}^T$. More precisely, they also examine how regret bounds are affected when the player is allowed to utilize side information $M_t$ before choosing $x_t$ at time $t$. They propose the Optimistic Mirror Descent (OpMD) algorithm, which is a special case of Algorithm~\ref{alg:mp} for $\eta_t = M_t$, $\xi_t = \grad f_t(z_t)$ and $\theta_t=1/T$, and are able to recover the offline Mirror Prox algorithm from \cite{Nemirovski2005} for smooth offline convex optimization and smooth offline SP problems. In fact, our results in Theorem~\ref{thm:OCO-smooth} and Theorem~\ref{thm:OnlineSP-smooth} can be derived from \cite[Lemma 1]{RakhlinSridharan2013b} by specifying the predictable sequences $M_t = \theta_t \grad f_t(v_t)$ and $M_t = \theta_t F_t(v_t)$ respectively. Here, we allow the player to have access \emph{only to gradient information} of $f_t$ or $\phi_t$ at time $t$. Because the focus of \cite{RakhlinSridharan2013a,RakhlinSridharan2013b} was different, the observation that the OpMD algorithm can obtain faster $O(1/T)$ convergence rates in the $1$-lookahead setting was not made before.
\epr
\end{remark}

\begin{remark}\label{rem:smoothLBdiscuss}
It is known that the OCO regret bounds with general smooth loss  functions have a lower bound complexity of at least $O(1/\sqrt{T})$ 
(this holds even for the case of linear loss functions \cite[Theorem 5]{AbernethyBartlett2008}). This is in contrast to the faster rate of $O(1/T)$ established in Theorem~\ref{thm:OCO-smooth}. The lookahead  
nature of our analysis of Algorithm~\ref{alg:mp} discussed in Remark~\ref{rem:anticipativity} plays a crucial role for achieving the speedup established in Theorem~\ref{thm:OCO-smooth}. 
\epr
\end{remark}

\section{Application: Robust Optimization}\label{sec:RO-app}

In this section, we apply our developments on OCO to solving the robust optimization (RO) problem \eqref{eqn:RO-problem}.
Instead of solving \eqref{eqn:RO-problem} directly, we examine the associated \emph{robust feasibility problem}: given desired accuracy $\epsilon>0$,
\begin{equation}\label{eqn:RO-feas-problem}
\begin{cases}
\text{\emph{Either}: find}\ \ x \in X \quad\text{s.t.}\quad \sup_{u^i\in U^i} f^i(x,u^i) \leq \epsilon \quad \forall i\in[m];\\
\text{\emph{or}: declare infeasibility, } \forall x \in X,\ \exists i \in [m] \quad \text{s.t.} \quad \sup_{u^i\in U^i} f^i(x,u^i) > 0.
\end{cases} 
\end{equation}
We note that optimizing an objective function $f(x)$ via feasibility oracle \eqref{eqn:RO-feas-problem} will incur only an extra $\log(1/\epsilon)$ multiplicative factor in the number of iterations. This approximate $\epsilon$-feasibility problem is motivated by how most convex optimization solvers certify their solutions. We are interested in the number of iterations needed to solve \eqref{eqn:RO-feas-problem}, which will depend on the accuracy parameter $\epsilon$.

It was established in \cite{Ho-NguyenKK2016RO} that, under the basic convexity assumptions, \eqref{eqn:RO-feas-problem} can be solved by standard OCO algorithms achieving $O(1/\epsilon^2)$ convergence rate and requiring only basic arithmetic operations and subgradient computations in each iteration. 
In this section, we examine how our regret bounds from  Section~\ref{sec:weighted-regret-algorithms} can improve the $O(1/\epsilon^2)$ convergence rate for \eqref{eqn:RO-feas-problem} under certain structural assumptions on the constraint functions $f^i$. We first define some notation. We denote $u := [u^1;\ldots;u^m]$, $U = U^1 \times \ldots \times U^m$ and $Y := \Delta_m$. Given sequences $x_t \in X$, $u_t \in U$, $y_t \in Y$ for $t\in[T]$ and weights $\theta \in \Delta_T$, we define
\begin{align*}
\epsilon^\circ(\{x_t,u_t,\theta_t\}_{t=1}^T) &:= \max_{i\in[m]} \left\{\sup_{u^i\in U^i} \sum_{t=1}^{T} \theta_t f^i(x_t,u^i) - \sum_{t=1}^{T} \theta_t f^i(x_t,u_t^i)\right\},\\
\epsilon^\bullet(\{x_t,u_t,y_t,\theta_t\}_{t=1}^T) &:= \max_{i\in[m]} \sum_{t=1}^{T} \theta_t f^i(x_t,u_t^i) - \inf_{x\in X} \sum_{t=1}^{T} \theta_t \sum_{i=1}^{m} y^{(i)}_t f^i(x,u_t^i),\quad\text{and}\\
\epsilon^\bullet(\{x_t,u_t,\theta_t\}_{t=1}^T) &:= \sum_{t=1}^{T} \theta_t \max_{i \in [m]} f^i(x_t,u_t^i) - \inf_{x \in X} \sum_{t=1}^T \theta_t \max_{i \in [m]} f^i(x,u_t^i).
\end{align*}
The following results from \cite{Ho-NguyenKK2016RO} states how \eqref{eqn:RO-feas-problem} can be verified in an iterative fashion.
\begin{theorem}[{\cite[Theorem 3.2, Corollary 3.1]{Ho-NguyenKK2016RO}}]\label{thm:robust-feas-oracle}
Let $x_t\in X$, $u_t\in U$, $y_t\in\Delta_m$ for $t\in[T]$, $\theta\in\Delta_T$, and $\tau \in (0,1)$. 
If $\epsilon^\circ(\{x_t,u_t,\theta_t\}_{t=1}^T) \leq \tau \epsilon$ and $\max_{i\in[m]} \sum_{t=1}^{T} \theta_t f^i(x_t,u_t^i)\leq (1-\tau) \epsilon$, then the solution $\bar{x}_T := \sum_{t=1}^{T} \theta_t x_t$ is $\epsilon$-feasible with respect to \eqref{eqn:RO-feas-problem}. 
If $\epsilon^\bullet(\{x_t,u_t,y_t,\theta_t\}_{t=1}^T)\leq (1-\tau) \epsilon$ and $\max_{i\in[m]} \sum_{t=1}^{T} \theta_t f^i(x_t,u_t^i) > (1-\tau) \epsilon$, then \eqref{eqn:RO-feas-problem} is infeasible. When all but $\{y_t\}_{t=1}^T$ is given, there exists an appropriate choice of $y_t \in \Delta_m$ such that  $\epsilon^\bullet(\{x_t,u_t,y_t,\theta_t\}_{t=1}^T) \leq \epsilon^\bullet(\{x_t,u_t,\theta_t\}_{t=1}^T)$.
Thus, if $\epsilon^\bullet(\{x_t,u_t,\theta_t\}_{t=1}^T)\leq (1-\tau) \epsilon$ and $\max_{i\in[m]} \sum_{t=1}^{T} \theta_t f^i(x_t,u_t^i) > (1-\tau) \epsilon$, then \eqref{eqn:RO-feas-problem} is infeasible.
\end{theorem}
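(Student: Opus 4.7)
The proof naturally splits into three parts matching the three conclusions of the theorem, and each piece is short once the right quantity is isolated.

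For the first claim (feasibility certificate), I would start from convexity of $f^i(\cdot,u^i)$ in $x$, which yields $f^i(\bar x_T,u^i)\le\sum_t\theta_t f^i(x_t,u^i)$ for every $u^i\in U^i$ (recall $\theta\in\Delta_T$). Taking a supremum in $u^i$ and adding and subtracting $\sum_t\theta_t f^i(x_t,u_t^i)$, the right-hand side decomposes exactly into $\sum_t\theta_t f^i(x_t,u_t^i)$ plus a term bounded by $\epsilon^\circ(\{x_t,u_t,\theta_t\}_{t=1}^T)$. Maximizing over $i\in[m]$ and plugging in the two hypotheses gives $\sup_{u^i\in U^i}f^i(\bar x_T,u^i)\le (1-\tau)\epsilon+\tau\epsilon=\epsilon$, which is $\epsilon$-feasibility.

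For the second claim (infeasibility certificate), I would argue by contradiction: assume \eqref{eqn:RO-feas-problem} is feasible and pick some $x^*\in X$ with $\sup_{u^i\in U^i}f^i(x^*,u^i)\le 0$ for every $i$. Since $y_t\in\Delta_m$ has nonnegative entries, summing the inequalities $f^i(x^*,u_t^i)\le 0$ weighted by $y_t^{(i)}$ and $\theta_t$ gives $\sum_t\theta_t\sum_i y_t^{(i)}f^i(x^*,u_t^i)\le 0$, hence $\inf_{x\in X}\sum_t\theta_t\sum_i y_t^{(i)}f^i(x,u_t^i)\le 0$. Plugging this into the definition of $\epsilon^\bullet(\{x_t,u_t,y_t,\theta_t\}_{t=1}^T)$ and using the second hypothesis $\max_i\sum_t\theta_t f^i(x_t,u_t^i)>(1-\tau)\epsilon$ produces $\epsilon^\bullet(\{x_t,u_t,y_t,\theta_t\}_{t=1}^T)>(1-\tau)\epsilon$, contradicting the first hypothesis.

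For the third claim (existence of a good $y_t$), the game is to compare
\[
\epsilon^\bullet(\{x_t,u_t,y_t,\theta_t\})=\max_i\sum_t\theta_t f^i(x_t,u_t^i)-\inf_{x\in X}\sum_t\theta_t\sum_i y_t^{(i)}f^i(x,u_t^i)
\]
with $\epsilon^\bullet(\{x_t,u_t,\theta_t\})=\sum_t\theta_t\max_i f^i(x_t,u_t^i)-\inf_{x\in X}\sum_t\theta_t\max_i f^i(x,u_t^i)$. The first terms already satisfy $\max_i\sum_t\theta_t f^i(x_t,u_t^i)\le\sum_t\theta_t\max_i f^i(x_t,u_t^i)$ trivially, so it remains to choose $\{y_t\}$ so that the two $\inf_x$ terms agree. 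Rewriting $\sum_t\theta_t\max_i f^i(x,u_t^i)=\max_{y_1,\ldots,y_T\in\Delta_m}\sum_t\theta_t\sum_i y_t^{(i)}f^i(x,u_t^i)$ and applying Sion's minimax theorem (the $f^i$'s are convex in $x$, linear in each $y_t$, and $\Delta_m$ is compact, matching the setting of the paper's RO framework) gives
\[
\inf_{x\in X}\sum_t\theta_t\max_i f^i(x,u_t^i)=\max_{y_1,\ldots,y_T\in\Delta_m}\inf_{x\in X}\sum_t\theta_t\sum_i y_t^{(i)}f^i(x,u_t^i).
\]
Taking $\{y_t\}$ to be any maximizer on the right (existence follows from compactness of $\Delta_m$ and upper semicontinuity of the inner inf) equates the two $\inf_x$ terms, and combined with the trivial first-term inequality yields $\epsilon^\bullet(\{x_t,u_t,y_t,\theta_t\})\le\epsilon^\bullet(\{x_t,u_t,\theta_t\})$. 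The fourth conclusion is then an immediate composition of this bound with the second claim.

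The only genuinely delicate step is the minimax interchange in the third claim, since its validity relies on the standing convexity/compactness assumptions of the RO framework; everything else is a one-line manipulation of the definitions together with convexity of $f^i$ and nonnegativity of the simplex weights.
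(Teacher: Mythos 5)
The paper does not actually prove this theorem --- it is imported verbatim from \cite[Theorem 3.2, Corollary 3.1]{Ho-NguyenKK2016RO} and stated without proof --- so there is no in-paper argument to compare against. Your proposal is correct as a self-contained proof of all four assertions, and the first two parts (convexity of $f^i(\cdot,u^i)$ plus the add-and-subtract decomposition for the feasibility certificate; nonnegativity of the simplex weights plus contradiction for the infeasibility certificate) are the only natural arguments, so they surely coincide with the cited proof in substance.

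The one place worth commenting on is the third claim. Your route through Sion's minimax theorem (over the product $\Delta_m^T$, with the per-period maxima rewritten as suprema of linear functionals) is valid: the objective is convex and continuous in $x$ by the standing Lipschitz assumption, affine in $(y_1,\dots,y_T)$, and $\Delta_m^T$ is compact, so the interchange and the attainment of the outer maximum both go through. It is worth noting that this nonconstructive detour is genuinely needed for the claim as stated: the tempting explicit choice $y_t=e_{i(t)}$ with $i(t)\in\argmax_i f^i(x_t,u_t^i)$ makes the first terms compare the right way but pushes the $\inf_x$ term in the wrong direction, so it does not directly give $\epsilon^\bullet(\{x_t,u_t,y_t,\theta_t\}_{t=1}^T)\le\epsilon^\bullet(\{x_t,u_t,\theta_t\}_{t=1}^T)$. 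Finally, you could have bypassed the third claim entirely for the fourth: if some $x^*$ is feasible then $\max_i f^i(x^*,u_t^i)\le 0$ for every $t$, so $\inf_x\sum_t\theta_t\max_i f^i(x,u_t^i)\le 0$ and $\epsilon^\bullet(\{x_t,u_t,\theta_t\}_{t=1}^T)\ge\sum_t\theta_t\max_i f^i(x_t,u_t^i)\ge\max_i\sum_t\theta_t f^i(x_t,u_t^i)>(1-\tau)\epsilon$, a one-line contradiction that avoids the minimax machinery; but since the third claim is asserted independently in the statement, your proof of it is needed anyway and is sound.
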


Thus, solving the robust feasibility problem \eqref{eqn:RO-feas-problem} reduces to bounding $\epsilon^\circ(\{x_t,u_t,\theta_t\}_{t=1}^T)$ and $\epsilon^\bullet(\{x_t,u_t,y_t,\theta_t\}_{t=1}^T)$ (or $\epsilon^\bullet(\{x_t,u_t,\theta_t\}_{t=1}^T)$), and then evaluating $\max_{i\in[m]} \sum_{t=1}^{T} \theta_t f^i(x_t,u_t^i)$.
Intuitively, robust feasibility can be seen as a two-player zero sum game, where one player chooses the $\{u_t\}_{t=1}^T$ and the other player chooses $\{x_t\}_{t=1}^T$. We can think of $\epsilon^\circ(\{x_t,u_t,\theta_t\}_{t=1}^T)$ and $\epsilon^\bullet(\{x_t,u_t,y_t,\theta_t\}_{t=1}^T)$ (or $\epsilon^\bullet(\{x_t,u_t,\theta_t\}_{t=1}^T)$) as approximations to the regret of each player.

We first discuss how to bound these terms \emph{individually}. After that, we discuss how to combine these bounds properly to solve \eqref{eqn:RO-feas-problem} by taking into account the common weights $\theta \in \Delta_T$ and any non-anticipatory/lookahead properties of the algorithms.

\begin{observation}\label{obs:RO-terms-OCO}
Given a sequence $\{x_t\}_{t=1}^T$, define the functions $f_t^i(u^i) := -f^i(x_t,u^i)$. Then the term $\epsilon^\circ(\{x_t,u_t,\theta_t\}_{t=1}^T)$ can be written as the maximum of weighted regret terms \eqref{eqn:OCO-weighted-regret} with the functions $f_t^i$ and weights $\theta \in \Delta_T$ over the sequences $\{u_t^i\}_{t=1}^T$:
\[ \epsilon^\circ(\{x_t,u_t,\theta_t\}_{t=1}^T) = \max_{i \in [m]} \left\{ \sum_{t=1}^T \theta_t f_t^i(u_t^i) - \inf_{u^i \in U^i} \sum_{t=1}^T \theta_t f_t^i(u^i) \right\}. \]
Given a sequence $\{u_t\}_{t=1}^T$, define the functions $\phi_t(x,y) := \sum_{i=1}^m y^{(i)} f^i(x,u_t^i)$. Then $\epsilon^\bullet(\{x_t,u_t,y_t,\theta_t\}_{t=1}^T)$ can be written as a weighted online saddle point gap term \eqref{eqn:SPgap} with functions $\phi_t$ and weights $\theta \in \Delta_T$ over the sequence $\{x_t,y_t\}_{t=1}^T$:
\[ \epsilon^\bullet(\{x_t,u_t,y_t,\theta_t\}_{t=1}^T) = \max_{y \in Y} \sum_{t=1}^T \theta_t \phi_t(x_t,y) - \inf_{x \in X} \sum_{t=1}^T \theta_t \phi_t(x,y_t). \]
Furthermore, let $h_t(x) := \max_{i \in [m]} f^i(x,u_t^i)$. Then $\epsilon^\bullet(\{x_t,u_t,\theta_t\}_{t=1}^T)$ can be written as a weighted regret term \eqref{eqn:OCO-weighted-regret} with functions $h_t$ and weights $\theta \in \Delta_T$ over the sequence $\{x_t\}_{t=1}^T$:
\[ \epsilon^\bullet(\{x_t,u_t,\theta_t\}_{t=1}^T) = \sum_{t=1}^T \theta_t h_t(x_t) - \inf_{x \in X} \sum_{t=1}^T \theta_t h_t(x). \]
\end{observation}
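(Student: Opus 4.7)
The plan is to verify each of the three identities by direct substitution, reducing the observation to elementary algebraic manipulation. I do not expect any real obstacle: the first and third identities are pure rewrites from the given definitions, and the only non-trivial step in the whole claim is a one-line simplex duality argument needed for the second identity.

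For the first identity, I would substitute $f_t^i(u^i) = -f^i(x_t, u^i)$ into the bracketed right-hand expression. Pushing the minus sign through the infimum converts $-\inf_{u^i \in U^i} \sum_t \theta_t f_t^i(u^i)$ into $\sup_{u^i \in U^i} \sum_t \theta_t f^i(x_t, u^i)$, while $\sum_t \theta_t f_t^i(u_t^i) = -\sum_t \theta_t f^i(x_t, u_t^i)$; the two sign flips combine to match the definition of $\epsilon^\circ$, and taking $\max_{i \in [m]}$ on both sides recovers the claimed equality. For the third identity there is nothing to do beyond unpacking the definition of $h_t$: both terms on the right-hand side match the corresponding terms in $\epsilon^\bullet(\{x_t, u_t, \theta_t\}_{t=1}^T)$ verbatim.

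For the second identity, the $\inf_x$ term is again immediate upon expanding $\phi_t(x, y_t) = \sum_{i=1}^m y_t^{(i)} f^i(x, u_t^i)$, which reproduces the Lagrangian-style term in the definition of $\epsilon^\bullet(\{x_t, u_t, y_t, \theta_t\}_{t=1}^T)$. The key step is handling the $\max_y$ term. After expanding $\phi_t(x_t, y)$ and swapping the order of summation, one obtains
\[
\max_{y \in \Delta_m} \sum_{t=1}^T \theta_t \phi_t(x_t, y) = \max_{y \in \Delta_m} \sum_{i=1}^m y^{(i)} \left(\sum_{t=1}^T \theta_t f^i(x_t, u_t^i)\right).
\]
Since this is the maximum of a linear function of $y$ over the simplex, the optimum is attained at a vertex and equals $\max_{i \in [m]} \sum_{t=1}^T \theta_t f^i(x_t, u_t^i)$, which is precisely the first term in the definition of $\epsilon^\bullet(\{x_t, u_t, y_t, \theta_t\}_{t=1}^T)$. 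Combining the two rewrites finishes the second identity and thus the observation.
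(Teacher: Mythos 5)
Your verification is correct and is exactly the routine unpacking of definitions that the paper leaves implicit (the Observation is stated without proof). You rightly isolate the only substantive point, namely that $\max_{y\in\Delta_m}\sum_{i=1}^m y^{(i)} c_i = \max_{i\in[m]} c_i$ because a linear function on the simplex attains its maximum at a vertex; everything else is direct substitution.
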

Observation~\ref{obs:RO-terms-OCO} states that we may bound the terms $\epsilon^\circ(\{x_t,u_t,\theta_t\}_{t=1}^T)$, $\epsilon^\bullet(\{x_t,u_t,y_t,\theta_t\}_{t=1}^T)$ and $\epsilon^\bullet(\{x_t,u_t,\theta_t\}_{t=1}^T)$ using OCO results from Section~\ref{sec:weighted-regret-algorithms}.

We have the following basic setup assumptions.
\begin{assumption}\label{ass:RO-prox-setup}\emph{}
\begin{itemize}
\item The domain $X$ is convex and admits a proximal setup with norm $\|\cdot\|_X$ and set width $\Omega_X$ as in Section~\ref{sec:prox-setup}.
\item For $i \in [m]$, the uncertainty sets $U^i$ are convex and admit proximal setups with norms $\|\cdot\|_{(i)}$ and set widths $\Omega_U < \infty$ as in Section~\ref{sec:prox-setup}.
\end{itemize}
\end{assumption}
\begin{assumption}\label{ass:RO-convexity-functions}
For each $i \in [m]$, the functions $f^i(x,u^i)$ are convex in $x$, concave in $u^i$, and are Lipschitz continuous in each variable, i.e., the subgradients are bounded: for all $u^i \in U^i$, $\|\grad_x f^i(x,u^i)\|_{X,*} \leq G_X < \infty$, and for all $x \in X$, $\|\grad_u f^i(x,u^i)\|_{(i),*} \leq G_U < \infty$.
\end{assumption}
Under Assumption~\ref{ass:RO-convexity-functions}, the functions $f_t^i(u^i)$ and $h_t(x)$ defined in Observation~\ref{obs:RO-terms-OCO} are convex in $u^i$ and $x$ respectively, and the functions $\phi_t(x,y)$ are convex-concave in $x$ and $y$. In \cite[Section 4.1]{Ho-NguyenKK2016RO}, it is shown that we can bound $\epsilon^\circ(\{x_t,u_t,\theta_t\}_{t=1}^T)$ and $\epsilon^\bullet(\{x_t,u_t,\theta_t\}_{t=1}^T)$ by $O(1/\sqrt{T})$, which then allows us to solve \eqref{eqn:RO-feas-problem} in $T=O(1/\epsilon^2)$ iterations. We will now examine how to improve these bounds under strong convexity and smoothness assumptions on the constraint functions $f^i$, which will then allow us to improve the rate for solving \eqref{eqn:RO-feas-problem}.

We first examine the bounds under strong convexity assumptions.
\begin{assumption}\label{ass:RO-strong-convex-u}
For each $i \in [m]$ and any fixed $x \in X$, the functions $f^i(x,u^i)$ are $\alpha_U^i$-strongly concave in $u^i$: there exists $\alpha_U^i > 0$ such that $-f^i(x,u^i) - \alpha_U^i \omega(u^i)$ is convex in $u^i$, where $\omega^i$ is the {\dgf} from the proximal setup for $U^i$. Furthermore, let $\alpha_U := \min_{i \in [m]} \alpha_U^i$.
\end{assumption}
\begin{proposition}\label{prop:RO-strong-convex-rate-weighted-regret}
Suppose that Assumptions~\ref{ass:RO-prox-setup},~\ref{ass:RO-convexity-functions}~and~\ref{ass:RO-strong-convex-u} hold. Fix any $i \in [m]$ and the set of convex combination weights $\theta_t = \frac{2t}{T(T+1)}$. For any sequence $\{x_t\}_{t=1}^T$, running Algorithm~\ref{alg:md} with {\dgf} $\omega^i$, $z_t = u_t^i$, $\xi_t = -\grad_u f^i(x_t,u_t^i)$ and $\gamma_t = \frac{2}{\alpha_U(t+1)}$ guarantees that
\[ \sup_{u^i\in U^i} \sum_{t=1}^{T} \theta_t f^i(x_t,u^i) - \sum_{t=1}^{T} \theta_t f^i(x_t,u_t^i) \leq \frac{2G_U^2}{\alpha_U(T+1)}. \]
In particular, for $\theta_t = \frac{2t}{T(T+1)}$, we can choose a sequence $\{u_t\}_{t=1}^T$ such that for any sequence $\{x_t\}_{t=1}^T$, we guarantee $\epsilon^\circ(\{x_t,u_t,\theta_t\}_{t=1}^T) \leq O(1/T)$.
\end{proposition}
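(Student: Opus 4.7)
The plan is to recognize this proposition as an essentially direct application of Theorem~\ref{thm:OCO-strongly-convex} once we set up the right loss functions. By Observation~\ref{obs:RO-terms-OCO}, for a fixed sequence $\{x_t\}_{t=1}^T$, define $f_t^i(u^i) := -f^i(x_t, u^i)$. Then the left-hand side of the proposition's inequality is exactly a weighted regret:
\[
\sup_{u^i \in U^i} \sum_{t=1}^T \theta_t f^i(x_t, u^i) - \sum_{t=1}^T \theta_t f^i(x_t, u_t^i) = \sum_{t=1}^T \theta_t f_t^i(u_t^i) - \inf_{u^i \in U^i}\sum_{t=1}^T \theta_t f_t^i(u^i).
\]

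The main work then is to verify that the sequence $\{f_t^i\}_{t=1}^T$ on domain $U^i$ with the dgf $\omega^i$ satisfies Assumption~\ref{ass:OCO-strong-convex}. Convexity of $f_t^i$ follows from concavity of $f^i(x_t, \cdot)$ (Assumption~\ref{ass:RO-convexity-functions}). For the strong convexity condition, note that $f_t^i(u^i) - \alpha_U^i\, \omega^i(u^i) = -f^i(x_t, u^i) - \alpha_U^i\, \omega^i(u^i)$ is convex by Assumption~\ref{ass:RO-strong-convex-u}; since $\alpha_U \leq \alpha_U^i$ and $\omega^i$ is convex, $f_t^i(u^i) - \alpha_U\, \omega^i(u^i)$ is also convex, giving strong convexity with modulus $\alpha_U$ independent of $t$. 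Finally, subgradient boundedness $\|\grad f_t^i(u^i)\|_{(i),*} = \|\grad_u f^i(x_t, u^i)\|_{(i),*} \leq G_U$ is immediate from Assumption~\ref{ass:RO-convexity-functions}.

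With these three conditions verified, I apply Theorem~\ref{thm:OCO-strongly-convex} directly with $\alpha = \alpha_U$, $G = G_U$, and domain $U^i$ equipped with its proximal setup. Algorithm~\ref{alg:md} is run on this OCO instance with the prescribed choices $z_t = u_t^i$, $\xi_t = \grad f_t^i(u_t^i) = -\grad_u f^i(x_t, u_t^i)$, weights $\theta_t = 2t/(T(T+1))$, and step sizes $\gamma_t = 2/(\alpha_U(t+1))$. Theorem~\ref{thm:OCO-strongly-convex} yields the bound $2G_U^2/(\alpha_U(T+1))$, which is exactly the claimed inequality.

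For the final ``in particular'' assertion, the approach is to run $m$ independent copies of Algorithm~\ref{alg:md} in parallel, one for each $i \in [m]$, each producing its own sequence $\{u_t^i\}_{t=1}^T$. Since the per-coordinate bound $2G_U^2/(\alpha_U(T+1))$ holds for every $i \in [m]$ and does not depend on $i$, taking the maximum over $i \in [m]$ gives $\epsilon^\circ(\{x_t, u_t, \theta_t\}_{t=1}^T) \leq 2G_U^2/(\alpha_U(T+1)) = O(1/T)$. I do not foresee a real obstacle in this argument; the only subtle point is confirming that the strong convexity modulus $\alpha_U$ (uniform over $i$ and $t$) plays well with the proximal setup on each $U^i$, but this is handled cleanly by passing from $\alpha_U^i$ to $\alpha_U$ via convexity of $\omega^i$ as noted above.
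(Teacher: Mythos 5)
Your proposal is correct and follows essentially the same route as the paper: the paper's proof simply observes that Assumptions~\ref{ass:RO-prox-setup}, \ref{ass:RO-convexity-functions}, and \ref{ass:RO-strong-convex-u} imply Assumption~\ref{ass:OCO-strong-convex} for the losses $f_t^i(u^i) = -f^i(x_t,u^i)$ and then invokes Theorem~\ref{thm:OCO-strongly-convex}. Your write-up just spells out the verification (including the harmless passage from $\alpha_U^i$ to $\alpha_U$ via convexity of $\omega^i$) and the parallel-copies argument for the $\max$ over $i\in[m]$, both of which the paper leaves implicit.
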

\begin{proof}
Assumption~\ref{ass:OCO-strong-convex} holds since Assumptions~\ref{ass:RO-prox-setup},~\ref{ass:RO-convexity-functions}~and~\ref{ass:RO-strong-convex-u} hold. Theorem~\ref{thm:OCO-strongly-convex} then applies to obtain the upper bounds on the regret terms.
\end{proof}
\begin{assumption}\label{ass:RO-strong-convex-x}
There exists $\alpha_X^i > 0$ such that for each $i \in [m]$ and each fixed $u^i \in U^i$, the function $f^i(x,u^i)$ is $\alpha_X^i$-strongly convex in $x$, that is, $f^i(x,u^i) - \alpha_X^i \omega(x)$ is convex, where $\omega$ is the {\dgf} from the proximal setup for $X$. Furthermore, define $\alpha_X := \min_{i \in [m]} \alpha_X^i$.
\end{assumption}
\begin{proposition}\label{prop:RO-strong-convex-onlineSP}
Suppose that Assumptions~\ref{ass:RO-prox-setup},~\ref{ass:RO-convexity-functions}~and~\ref{ass:RO-strong-convex-x} hold. Fix the set of convex combination weights $\theta_t = \frac{2t}{T(T+1)}$. For any sequence $\{u_t\}_{t=1}^T$, running Algorithm~\ref{alg:md} with $z_t = x_t$, $\xi_t = \grad_x f^{i(t)}(x_t,u_t^{i(t)})$ where $i(t) = \argmax_{i \in [m]} f^i(x_t,u_t^i)$, and $\gamma_t = \frac{2}{\alpha_X(t+1)}$ guarantees that
\[\epsilon^\bullet(\{x_t,u_t,\theta_t\}_{t=1}^T) \leq \frac{2 G_X^2}{\alpha_X (T+1)} = O\left( \frac{1}{T} \right).\]
\end{proposition}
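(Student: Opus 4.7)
\smallskip
\noindent\textbf{Proof proposal.} The plan is to recognize $\epsilon^\bullet(\{x_t,u_t,\theta_t\}_{t=1}^T)$ as a weighted regret quantity to which Theorem~\ref{thm:OCO-strongly-convex} applies, and then simply instantiate that theorem. Concretely, Observation~\ref{obs:RO-terms-OCO} tells us that
\[
\epsilon^\bullet(\{x_t,u_t,\theta_t\}_{t=1}^T) \;=\; \sum_{t=1}^T \theta_t\, h_t(x_t) \;-\; \inf_{x \in X} \sum_{t=1}^T \theta_t\, h_t(x),
\]
where $h_t(x) := \max_{i \in [m]} f^i(x,u_t^i)$. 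Thus it suffices to bound the weighted regret of the sequence $\{x_t\}_{t=1}^T$ against the loss functions $h_t$ by $2G_X^2/(\alpha_X(T+1))$ via Theorem~\ref{thm:OCO-strongly-convex}.

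To invoke Theorem~\ref{thm:OCO-strongly-convex}, I would verify each clause of Assumption~\ref{ass:OCO-strong-convex} for the $h_t$'s. The proximal setup on $X$ comes from Assumption~\ref{ass:RO-prox-setup}. For $\alpha_X$-strong convexity of $h_t$ with respect to the \dgf{} $\omega$ of $X$: Assumption~\ref{ass:RO-strong-convex-x} says that each $f^i(\cdot,u_t^i) - \alpha_X^i\,\omega(\cdot)$ is convex, and since $\alpha_X = \min_i \alpha_X^i$, each $f^i(\cdot,u_t^i) - \alpha_X\,\omega(\cdot)$ is convex too. Then
\[
h_t(x) - \alpha_X\,\omega(x) \;=\; \max_{i\in[m]} \bigl( f^i(x,u_t^i) - \alpha_X\,\omega(x) \bigr)
\]
is a pointwise maximum of convex functions, hence convex, which is exactly the requirement on $h_t$. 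For the subgradient bound, at any $x_t$, letting $i(t) \in \argmax_{i\in[m]} f^i(x_t,u_t^i)$, it is standard that $\grad_x f^{i(t)}(x_t,u_t^{i(t)}) \in \partial h_t(x_t)$ (the subdifferential of a pointwise max contains the subgradients of any active function). Assumption~\ref{ass:RO-convexity-functions} then gives $\|\grad_x f^{i(t)}(x_t,u_t^{i(t)})\|_{X,*} \le G_X$, so we may take $G = G_X$.

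With these ingredients in place, Algorithm~\ref{alg:md} run with $z_t = x_t$, $\xi_t = \grad_x f^{i(t)}(x_t,u_t^{i(t)})$, $\gamma_t = \tfrac{2}{\alpha_X(t+1)}$, and $\theta_t = \tfrac{2t}{T(T+1)}$ is exactly the instance covered by Theorem~\ref{thm:OCO-strongly-convex} applied to $\{h_t\}_{t=1}^T$, which yields the bound
\[
\sum_{t=1}^T \theta_t h_t(x_t) - \inf_{x \in X} \sum_{t=1}^T \theta_t h_t(x) \;\le\; \frac{2G_X^2}{\alpha_X(T+1)}.
\]
By the reformulation from Observation~\ref{obs:RO-terms-OCO}, this is precisely the claimed bound on $\epsilon^\bullet(\{x_t,u_t,\theta_t\}_{t=1}^T)$.

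I do not expect any real obstacle here: the proof is essentially a reduction, and the only subtlety is checking that strong convexity of each $f^i(\cdot,u_t^i)$ with respect to $\omega$ transfers to the pointwise maximum $h_t$, which is immediate from the observation that taking a max over convex functions preserves convexity. The careful point worth writing out explicitly is why $\grad_x f^{i(t)}(x_t,u_t^{i(t)})$ qualifies as the subgradient $\grad h_t(x_t)$ required by Algorithm~\ref{alg:md}, since $h_t$ is generally nondifferentiable even when each $f^i(\cdot,u_t^i)$ is smooth.
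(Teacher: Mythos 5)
Your proposal is correct and follows essentially the same route as the paper: both reduce $\epsilon^\bullet(\{x_t,u_t,\theta_t\}_{t=1}^T)$ via Observation~\ref{obs:RO-terms-OCO} to the weighted regret of $h_t(x)=\max_{i\in[m]} f^i(x,u_t^i)$, verify that Assumption~\ref{ass:OCO-strong-convex} holds with parameters $\alpha_X$ and $G_X$, and invoke Theorem~\ref{thm:OCO-strongly-convex}. Your write-up is in fact more explicit than the paper's one-line proof, usefully spelling out why strong convexity survives the pointwise maximum and why $\grad_x f^{i(t)}(x_t,u_t^{i(t)})$ is a valid subgradient of $h_t$ at $x_t$.
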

\begin{proof}
Assumption~\ref{ass:OCO-strong-convex} holds since Assumptions~\ref{ass:RO-prox-setup},~\ref{ass:RO-convexity-functions}~and~\ref{ass:RO-strong-convex-x} hold, and for any $u$, the function $h_u(x) = \max_{i \in m} f^i(x,u^i)$ is strongly convex in $x$ with parameter $\alpha_X = \min_{i \in [m]} \alpha_X^i$. Theorem~\ref{thm:OCO-strongly-convex} then applies to obtain the upper bound on the regret term.
\end{proof}

We now examine the bounds under smoothness assumptions.
\begin{assumption}\label{ass:RO-smooth-u}
For each $i \in [m]$ and any fixed $x \in X$, the functions $f^i(x,u^i)$ are $L_U$-smooth in $u^i$: there exists $L_U < \infty$ such that for any $u^i,(u^i)' \in U^i$,
\[ \| \grad_u f^i(x,u^i) - \grad_u f^i(x,(u^i)')\|_{i,*} \leq L_U \|u^i - (u^i)'\|_i.\]
\end{assumption}
\begin{proposition}\label{prop:RO-smooth-rate-weighted-regret}
Suppose that Assumptions~\ref{ass:RO-prox-setup},~\ref{ass:RO-convexity-functions}~and~\ref{ass:RO-smooth-u} hold. Fix any $i \in [m]$. For any sequence $\{x_t\}_{t=1}^T$, running Algorithm~\ref{alg:mp} with $z_t = u_t^i$, $\eta_t = -\theta_t \grad_u f^i(x_t,v_t^i)$, $\xi_t = -\theta_t \grad_u f^i(x_t,u_t^i)$ and $\gamma_t = \frac{1}{L_U \sup_{t \in [T]} \theta_t}$ guarantees that
\[ \sup_{u^i\in U^i} \sum_{t=1}^{T} \theta_t f^i(x_t,u^i) - \sum_{t=1}^{T} \theta_t f^i(x_t,u_t^i) \leq \Omega_U L_U \sup_{t \in [T]} \theta_t. \]
In particular, for uniform weights $\theta_t = 1/T$ or increasing weights $\theta_t = \frac{2t}{T(T+1)}$, we can choose a sequence $\{u_t\}_{t=1}^T$ such that for any sequence $\{x_t\}_{t=1}^T$, we guarantee $\epsilon^\circ(\{x_t,u_t,\theta_t\}_{t=1}^T) \leq O(1/T)$.
\end{proposition}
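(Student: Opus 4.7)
My plan is to recognize that this proposition is the smoothness analogue of Proposition~\ref{prop:RO-strong-convex-rate-weighted-regret} and reduce it directly to Theorem~\ref{thm:OCO-smooth} via the correspondence laid out in Observation~\ref{obs:RO-terms-OCO}. The statement splits naturally into two parts: the single-$i$ per-iterate bound, and the aggregate bound on $\epsilon^\circ$ when weights are uniform or the increasing $\theta_t = 2t/(T(T+1))$.

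First I would fix $i \in [m]$ and introduce the functions $f_t^i(u^i) := -f^i(x_t, u^i)$, exactly as in Observation~\ref{obs:RO-terms-OCO}. The goal is to verify that the family $\{f_t^i\}_{t=1}^T$ satisfies Assumption~\ref{ass:OCO-smooth} on the domain $U^i$ with its proximal setup. Convexity of $f_t^i$ in $u^i$ follows from the concavity of $f^i(x_t, \cdot)$ in Assumption~\ref{ass:RO-convexity-functions}. Smoothness is immediate from Assumption~\ref{ass:RO-smooth-u}: for all $u^i, (u^i)' \in U^i$,
\[
\|\grad f_t^i(u^i) - \grad f_t^i((u^i)')\|_{(i),*} = \|\grad_u f^i(x_t,u^i) - \grad_u f^i(x_t,(u^i)')\|_{(i),*} \leq L_U \|u^i - (u^i)'\|_{(i)}.
\]
Thus Assumption~\ref{ass:OCO-smooth} holds with constant $L_U$ and set width $\Omega_U$.

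Next I would instantiate Theorem~\ref{thm:OCO-smooth} on this family. The theorem prescribes $\eta_t = \theta_t \grad f_t^i(v_t)$ and $\xi_t = \theta_t \grad f_t^i(z_t)$, which, under the identification $z_t = u_t^i$, become precisely $\eta_t = -\theta_t \grad_u f^i(x_t, v_t^i)$ and $\xi_t = -\theta_t \grad_u f^i(x_t, u_t^i)$ as stated in the proposition; the prescribed step size $\gamma_t = 1/(L_U \sup_{t\in[T]} \theta_t)$ also matches. Note that the 1-lookahead nature of Algorithm~\ref{alg:mp} (see Remark~\ref{rem:anticipativity}) is compatible here because $x_t$ is assumed given when we form $\eta_t$, so $\grad_u f^i(x_t, v_t^i)$ is computable before $u_t^i$ is chosen. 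Theorem~\ref{thm:OCO-smooth} then yields the weighted-regret bound
\[
\sum_{t=1}^T \theta_t f_t^i(u_t^i) - \inf_{u^i \in U^i} \sum_{t=1}^T \theta_t f_t^i(u^i) \leq \Omega_U L_U \sup_{t \in [T]} \theta_t,
\]
which, rewritten in terms of $f^i$, is the first claimed inequality.

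For the "In particular" statement, I would simply run the procedure above independently for each $i \in [m]$ to produce the coordinate sequences $\{u_t^i\}_{t=1}^T$, and stack them into $u_t = [u_t^1; \ldots; u_t^m]$. Taking the maximum over $i \in [m]$ of the per-coordinate bound and invoking Observation~\ref{obs:RO-terms-OCO} gives
\[
\epsilon^\circ(\{x_t,u_t,\theta_t\}_{t=1}^T) \leq \Omega_U L_U \sup_{t \in [T]} \theta_t.
\]
Finally, $\sup_{t\in[T]} \theta_t = 1/T$ for uniform weights and $\sup_{t\in[T]} \theta_t = 2/(T+1)$ for the increasing weights $\theta_t = 2t/(T(T+1))$, both of which are $O(1/T)$, producing the stated rate. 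I do not expect any serious obstacle: the substantive work was absorbed into Theorem~\ref{thm:OCO-smooth}, and the only items to check carefully are the sign conventions in $\eta_t, \xi_t$ and the compatibility of the 1-lookahead requirement with the RO setting where $x_t$ is treated as exogenous in this one-sided analysis.
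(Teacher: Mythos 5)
Your proposal is correct and follows exactly the paper's route: verify that Assumptions~\ref{ass:RO-prox-setup}, \ref{ass:RO-convexity-functions}, and \ref{ass:RO-smooth-u} imply Assumption~\ref{ass:OCO-smooth} for the functions $f_t^i(u^i) = -f^i(x_t,u^i)$, then invoke Theorem~\ref{thm:OCO-smooth}. The paper states this in two sentences; your version merely spells out the sign conventions and the per-coordinate stacking, both of which check out.
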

\begin{proof}
Assumption~\ref{ass:OCO-smooth} holds since Assumptions~\ref{ass:RO-prox-setup},~\ref{ass:RO-convexity-functions} and~\ref{ass:RO-smooth-u} hold. Theorem~\ref{thm:OCO-smooth} then applies to obtain the upper bounds on the regret terms.
\end{proof}
Before continuing, we note that the functions $\max_{i \in [m]} f^i(x,u^i)$ are non-smooth in $x$ in general, so we will not examine the term $\epsilon^\bullet(\{x_t,u_t,\theta_t\}_{t=1}^T)$. Instead, we examine the `smoothed' term $\epsilon^\bullet(\{x_t,u_t,y_t,\theta_t\}_{t=1}^T)$, where the functions $\sum_{i=1}^m y^{(i)} f^i(x,u^i)$ are convex-concave and smooth in $[x;y]$. That is, we will bound the online saddle point gap \eqref{eqn:onlineSP-gap} from Observation~\ref{obs:RO-terms-OCO}.
\begin{assumption}\label{ass:RO-smooth-x}
For each $i \in [m]$ and any fixed $u^i \in U^i$, the functions $f^i(x,u^i)$ are $L_X$-smooth in $x^i$: there exists $L_X < \infty$ such that for any $x,x' \in X$,
\[ \| \grad_x f^i(x,u^i) - \grad_x f^i(x',u^i)\|_{X,*} \leq L_X \|x - x'\|_X.\]
\end{assumption}
\begin{observation}\label{obs:RO-joint-domain-setup}
Our domain is now $X \times Y$, since we add the variables $y \in Y = \Delta_m$. For the simplex $Y$, there exists a proximal setup with $\ell_1$-norm and set width $\Omega_y = \log(m)$. As mentioned in Section~\ref{sec:prox-setup}, we can construct a norm and proximal setup for $X \times Y$ according to \cite[Section 5.7.2]{JuditNem2012Pt1} and \cite[Section 6.3.3]{JuditNem2012Pt2}. Then the set width of this hybrid setup is $\Omega_{X,Y} = 1$, and under Assumption~\ref{ass:RO-smooth-x}, the smoothness parameter for the function $\phi_u(x,y) = \sum_{i=1}^m y^{(i)} f^i(x,u^i)$ with the constructed norm will be
\begin{equation}\label{eqn:RO-joint-smoothness-parameter}
L_{X,Y} := L_X \Omega_X + 2 G_X \sqrt{\Omega_X \log(m)},
\end{equation}
where $G_X$ is the bound on $\|\grad_x f^i(x,u^i)\|_{X,*}$. We refer to \cite[Section 5]{Nemirovski2005} and \cite[Section 6.3.3]{JuditNem2012Pt2} for further details.
\end{observation}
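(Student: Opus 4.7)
The plan is to verify the three claims separately: (a) the simplex admits a proximal setup with $\ell_1$-norm and set width $\log m$; (b) the joint construction gives $\Omega_{X,Y}=1$; and (c) the smoothness constant equals $L_X\Omega_X + 2G_X\sqrt{\Omega_X\log m}$.

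For (a), I would equip $Y = \Delta_m$ with the shifted negative entropy \dgf{} $\omega_Y(y) = \log m + \sum_{j=1}^m y^{(j)}\log y^{(j)}$. Its $\omega$-center is $y_{\omega_Y} = (1/m,\ldots,1/m)$ with $\omega_Y(y_{\omega_Y}) = 0$, and the maximum on the simplex is $\log m$, attained at the vertices. Pinsker's inequality $V^{\omega_Y}_q(p) = D_{\mathrm{KL}}(p\,\|\,q) \geq \tfrac{1}{2}\|p-q\|_1^2$ is exactly the $1$-strong convexity of $\omega_Y$ with respect to $\|\cdot\|_1$, and $V_{y_{\omega_Y}}(y) \leq \omega_Y(y) \leq \log m$ gives $\Omega_Y = \log m$.

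For (b), following the recipe of Section~\ref{sec:prox-setup}, I would set
\[
\omega(z) := \frac{1}{2\Omega_X}\omega_X(x) + \frac{1}{2\log m}\omega_Y(y),
\]
which is $1$-strongly convex with respect to the norm $\|z\|^2 := \tfrac{1}{2\Omega_X}\|x\|_X^2 + \tfrac{1}{2\log m}\|y\|_1^2$. Because the Bregman divergence decomposes, $\max_z V_{z_\omega}(z) = \tfrac{1}{2\Omega_X}\Omega_X + \tfrac{1}{2\log m}\log m = 1$, yielding $\Omega_{X,Y} = 1$. The corresponding dual norm is $\|\zeta\|_*^2 = 2\Omega_X\|\zeta_x\|_{X,*}^2 + 2\log m\,\|\zeta_y\|_\infty^2$.

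For (c), since $\phi_u$ is linear in $y$, the monotone operator $F(z) = [\sum_i y^{(i)}\grad_x f^i(x,u^i);\, -(f^1(x,u^1),\ldots,f^m(x,u^m))^\top]$ splits cleanly. Using the triangle inequality, $\sum_i y^{(i)} = 1$, and Assumptions~\ref{ass:RO-convexity-functions} and~\ref{ass:RO-smooth-x}, I would derive the partial bounds
\begin{align*}
\|\grad_x\phi_u(x,y) - \grad_x\phi_u(x',y')\|_{X,*} &\leq L_X\|x-x'\|_X + G_X\|y-y'\|_1, \\
\|\grad_y\phi_u(x,y) - \grad_y\phi_u(x',y')\|_\infty &\leq G_X\|x-x'\|_X.
\end{align*}
Substituting into $\|F(z)-F(z')\|_*^2 = 2\Omega_X\|\cdot\|_{X,*}^2 + 2\log m\,\|\cdot\|_\infty^2$ and balancing against the weights $\tfrac{1}{2\Omega_X}$, $\tfrac{1}{2\log m}$ in $\|z-z'\|^2$ via a parameterized Cauchy--Schwarz inequality yields $\|F(z)-F(z')\|_* \leq L_{X,Y}\|z-z'\|$ with the claimed constant.

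The main obstacle is the algebraic balancing in step (c): a naive $(a+b)^2 \leq 2a^2 + 2b^2$ would lose multiplicative constants, so the cross term $2G_X\|x-x'\|_X\|y-y'\|_1$ arising in the expansion must be split via a parameter $t>0$ as $t\,G_X^2\|x-x'\|_X^2 + t^{-1}\|y-y'\|_1^2$, with $t$ tuned so that the coefficients of $\|x-x'\|_X^2$ and $\|y-y'\|_1^2$ in the resulting bound equal $L_{X,Y}^2/(2\Omega_X)$ and $L_{X,Y}^2/(2\log m)$ respectively. This is the same balancing trick employed to derive the smoothness of bilinear matrix games in the referenced works, and produces the characteristic form $L_X\Omega_X + 2G_X\sqrt{\Omega_X\log m}$.
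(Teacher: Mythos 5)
The paper itself offers no proof of this Observation: it is justified entirely by citation to \cite[Section 5]{Nemirovski2005} and \cite[Section 6.3.3]{JuditNem2012Pt2}, so what you have written is a reconstruction of the standard argument from those references rather than an alternative to anything in the paper. Your parts (a) and (b) are correct: the shifted entropy {\dgf} on $\Delta_m$ is $1$-strongly convex with respect to $\|\cdot\|_1$ by Pinsker's inequality and has set width $\log m$; the combination $\omega(z)=\tfrac{1}{2\Omega_X}\omega_X(x)+\tfrac{1}{2\log m}\omega_Y(y)$ is $1$-strongly convex for the norm you define, its Bregman distance decomposes, and $\Omega_{X,Y}=\tfrac12+\tfrac12=1$. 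The two partial Lipschitz estimates at the start of (c) are also correct.

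The gap is in the final step of (c): under the normalization you chose (the one that makes $\Omega_{X,Y}=1$), no balancing of the cross term can produce the constant $L_X\Omega_X+2G_X\sqrt{\Omega_X\log m}$, because the obstruction sits in the diagonal term. Take $y=y'$ and a pair $x,x'$ for which $\|\grad_x\phi_u(x,y)-\grad_x\phi_u(x',y)\|_{X,*}=L_X\|x-x'\|_X$ is tight; then $\|F(z)-F(z')\|_*\geq\sqrt{2\Omega_X}\,L_X\|x-x'\|_X$ while $\|z-z'\|=\|x-x'\|_X/\sqrt{2\Omega_X}$, forcing any valid Lipschitz constant to satisfy $L\geq 2\Omega_XL_X$, twice the first term of \eqref{eqn:RO-joint-smoothness-parameter}. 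More precisely, writing $A=2\Omega_XL_X$ and $B=2G_X\sqrt{\Omega_X\log m}$, the best constant extractable from your partial bounds is the square root of the largest eigenvalue of $\bigl(\begin{smallmatrix}A^2+B^2&AB\\ AB&B^2\end{smallmatrix}\bigr)$, which lies between $\sqrt{A^2+B^2}$ and $A+B$ and strictly exceeds $A/2+B$ whenever $A>0$; your parameter $t$ only redistributes the cross term $2L_XG_X\|x-x'\|_X\|y-y'\|_1$ and cannot shrink the coefficient of $\|x-x'\|_X^2$ below what the diagonal contributions already impose. The repair is either to accept $L_{X,Y}=2L_X\Omega_X+2G_X\sqrt{\Omega_X\log m}$, or to rescale to $\beta_x=1/\Omega_X$, $\beta_y=1/\log m$, for which the stated constant is valid but $\Omega_{X,Y}=2$. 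Either way the product $\Omega_{X,Y}L_{X,Y}$ --- the only quantity entering Theorem~\ref{thm:OnlineSP-smooth}, and hence Proposition~\ref{prop:RO-smooth-rate-onlineSP} and Theorem~\ref{thm:RO-strong-u-smooth-x} --- changes by at most a factor of $2$, so nothing downstream is affected; but your step (c) as written claims a constant that the construction does not deliver.
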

\begin{proposition}\label{prop:RO-smooth-rate-onlineSP}
Suppose that Assumptions~\ref{ass:RO-prox-setup},~\ref{ass:RO-convexity-functions}~and~\ref{ass:RO-smooth-x} hold. Fix any $i \in [m]$. For any sequence $\{u_t\}_{t=1}^T$, denote $\phi_t(x,y) = \sum_{i=1}^m y^{(i)} f^i(x,u_t^i)$. Running Algorithm~\ref{alg:mp} with $z_t = [x_t;y_t]$, $\eta_t = \theta_t [\grad_x \phi_t(v_t);-\grad_y \phi_t(v_t)]$, $\xi_t = \theta_t [\grad_x \phi_t(z_t); -\grad_y \phi_t(z_t)]$ and $\gamma_t = \frac{1}{L_{X,Y} \sup_{t \in [T]} \theta_t}$ guarantees that
\[\epsilon^\bullet(\{x_t,u_t,y_t,\theta_t\}_{t=1}^T) \leq \Omega_{X,Y} L_{X,Y} \sup_{t \in [T]} \theta_t = \left( L_X \Omega_X + 2 G_X \sqrt{\Omega_X \log(m)} \right) \sup_{t \in [T]} \theta_t.\]
In particular, for uniform weights $\theta_t = 1/T$, or for increasing weights $\theta_t = \frac{2t}{T(T+1)}$, we can choose a sequence $\{u_t\}_{t=1}^T$ such that for any sequence $\{x_t\}_{t=1}^T$, we guarantee $\epsilon^\bullet(\{x_t,u_t,y_t,\theta_t\}_{t=1}^T) \leq O\left(\frac{\sqrt{\log(m)}}{T}\right)$.
\end{proposition}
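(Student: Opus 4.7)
The plan is to reduce this directly to Theorem~\ref{thm:OnlineSP-smooth} applied to the sequence $\{\phi_t\}_{t=1}^T$ on the joint domain $X\times Y$, and then identify the resulting weighted online SP gap with $\epsilon^\bullet(\{x_t,u_t,y_t,\theta_t\}_{t=1}^T)$ via Observation~\ref{obs:RO-terms-OCO}. So first I would verify the three parts of Assumption~\ref{ass:OnlineSP-smooth} for the $\phi_t$: (i) the proximal setup on $X\times Y$ comes from Observation~\ref{obs:RO-joint-domain-setup}, which equips $Y=\Delta_m$ with the entropy {\dgf} (giving $\Omega_Y=\log m$) and combines it with the given proximal setup on $X$ to produce a joint setup with $\Omega_{X,Y}=1$; (ii) convex-concavity of $\phi_t(x,y)=\sum_{i=1}^m y^{(i)} f^i(x,u_t^i)$ is immediate---Assumption~\ref{ass:RO-convexity-functions} makes each $f^i(\cdot,u_t^i)$ convex in $x$, and $y\in\Delta_m$ has $y^{(i)}\geq0$, so the convex combination is convex in $x$; concavity in $y$ follows since $\phi_t$ is linear in $y$.

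The one substantive step is (iii), establishing Lipschitz continuity of the joint monotone operator $F_t(z)=[\grad_x\phi_t(x,y);-\grad_y\phi_t(x,y)]$ in the joint norm. The $x$-block picks up the $L_X$-smoothness from Assumption~\ref{ass:RO-smooth-x} averaged by $y$, while the $y$-block is the vector $(f^i(x,u_t^i))_i$ whose coordinates have $x$-gradients bounded by $G_X$ from Assumption~\ref{ass:RO-convexity-functions}. This is precisely the calculation already summarized in \eqref{eqn:RO-joint-smoothness-parameter} and carried out in \cite[Section~5]{Nemirovski2005} and \cite[Section~6.3.3]{JuditNem2012Pt2}: it yields $L_{X,Y}=L_X\Omega_X+2G_X\sqrt{\Omega_X\log(m)}$. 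I would invoke these references rather than reproducing the norm-juggling in detail, since this is the only mildly delicate ingredient and has a standard treatment.

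With Assumption~\ref{ass:OnlineSP-smooth} in hand for the sequence $\{\phi_t\}$ and constants $\Omega_{X,Y},L_{X,Y}$, Theorem~\ref{thm:OnlineSP-smooth} applied with $z_t=[x_t;y_t]$, $\eta_t=\theta_t F_t(v_t)$, $\xi_t=\theta_t F_t(z_t)$, and $\gamma_t=1/(L_{X,Y}\sup_{t\in[T]}\theta_t)$ yields
\[
\sup_{y\in Y}\sum_{t=1}^T\theta_t\phi_t(x_t,y)-\inf_{x\in X}\sum_{t=1}^T\theta_t\phi_t(x,y_t)\ \leq\ \Omega_{X,Y}\,L_{X,Y}\,\sup_{t\in[T]}\theta_t.
\]
By Observation~\ref{obs:RO-terms-OCO}, the left-hand side is exactly $\epsilon^\bullet(\{x_t,u_t,y_t,\theta_t\}_{t=1}^T)$, which gives the first claimed bound after substituting $\Omega_{X,Y}=1$ and the formula for $L_{X,Y}$. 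For the ``In particular'' clause, I would simply observe that both weight choices satisfy $\sup_{t\in[T]}\theta_t=O(1/T)$ (trivially for $\theta_t=1/T$, and via $\sup_t 2t/(T(T+1))=2/(T+1)$ for the increasing weights), so the bound collapses to $O(\sqrt{\log(m)}/T)$, independent of the $x_t$-sequence since Algorithm~\ref{alg:mp} itself produces $\{x_t,y_t\}$ from $\{u_t\}$ through $F_t$. The main obstacle is genuinely just the bookkeeping for the joint-norm smoothness constant in step (iii); once that is quoted, the rest is a direct application.
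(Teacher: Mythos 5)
Your proposal is correct and follows essentially the same route as the paper: the paper's proof likewise observes that Assumptions~\ref{ass:RO-prox-setup},~\ref{ass:RO-convexity-functions},~\ref{ass:RO-smooth-x} together with Observation~\ref{obs:RO-joint-domain-setup} yield Assumption~\ref{ass:OnlineSP-smooth} for the functions $\phi_t$, and then invokes Theorem~\ref{thm:OnlineSP-smooth} (with the identification of the online SP gap as $\epsilon^\bullet$ coming from Observation~\ref{obs:RO-terms-OCO}). Your additional explicit checks of convex-concavity and of $\sup_{t\in[T]}\theta_t=O(1/T)$ for both weight choices are consistent with what the paper leaves implicit.
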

\begin{proof}
Assumptions~\ref{ass:RO-prox-setup},~\ref{ass:RO-convexity-functions}~and~\ref{ass:RO-smooth-x} along with Observation~\ref{obs:RO-joint-domain-setup} imply that Assumption~\ref{ass:OnlineSP-smooth} holds. Then from Theorem~\ref{thm:OnlineSP-smooth}, we obtain the upper bounds on the regret terms.
\end{proof}

We now examine how to combine our results to solve the robust feasibility problem \eqref{eqn:RO-feas-problem}. To solve \eqref{eqn:RO-feas-problem}, we must choose weights $\theta \in \Delta_T$ and generate sequences $\{x_t,u_t,y_t\}_{t=1}^T$ to \emph{simultaneously} bound $\epsilon^\circ(\{x_t,u_t,\theta_t\}_{t=1}^T)$ and one of $\epsilon^\bullet(\{x_t,u_t,y_t,\theta_t\}_{t=1}^T)$ or $\epsilon^\bullet(\{x_t,u_t,\theta_t\}_{t=1}^T)$. Depending on the structural assumptions,  we would like to combine Propositions~\ref{prop:RO-strong-convex-rate-weighted-regret},~\ref{prop:RO-smooth-rate-weighted-regret} and Propositions~\ref{prop:RO-strong-convex-onlineSP},~\ref{prop:RO-smooth-rate-onlineSP} in a valid fashion to achieve the best possible rate. Every combination is valid, except for Propositions~\ref{prop:RO-smooth-rate-weighted-regret}~and~\ref{prop:RO-smooth-rate-onlineSP} because of the 1-lookahead (anticipatory) nature of Algorithm~\ref{alg:mp}. We discuss this below. 
\begin{remark}\label{rem:RO-anticipativity}
Note that the sequences $\{u_t\}_{t=1}^T$ and $\{x_t,y_t\}_{t=1}^T$ (or just $\{x_t\}_{t=1}^T$) are generated by two different processes which use inter-related information. Hence, we have to ensure that the information available to each process is sufficient to generate the next step. For example, suppose that we use Proposition~\ref{prop:RO-smooth-rate-weighted-regret} to generate the sequence $\{u_t\}_{t=1}^T$. By Remark~\ref{rem:anticipativity}, at iteration $t$, for each $i \in [m]$ we require the knowledge of the function $f_t^i(u^i) = -f^i(x_t,u^i)$ to compute $u_t^i$. In other words, we need $x_t$ to compute $u_t$. As a consequence, we must compute $x_t$ using only knowledge of previous iterations $\{u_s\}_{s=1}^{t-1}$. Therefore, by Remark~\ref{rem:MDnon-anticipatory}, we cannot use Proposition~\ref{prop:RO-smooth-rate-onlineSP}; only Proposition~\ref{prop:RO-strong-convex-onlineSP} can be utilized.
\epr
\end{remark}
In the light of Remark~\ref{rem:RO-anticipativity}, we can combine these  propositions in three different ways under various structural assumptions. We state three results which improve on the $O(1/\epsilon^2)$ convergence from \cite{Ho-NguyenKK2016RO}. The proofs of these are straightforward applications of the relevant propositions and hence are omitted.
\begin{theorem}\label{thm:RO-strong-both}
Suppose that Assumptions~\ref{ass:RO-prox-setup},~\ref{ass:RO-convexity-functions},~\ref{ass:RO-strong-convex-u}~and~\ref{ass:RO-strong-convex-x} hold. Then we can solve \eqref{eqn:RO-feas-problem} to within $\epsilon$-approximation in $T = O(1/\epsilon)$ iterations by employing Proposition~\ref{prop:RO-strong-convex-rate-weighted-regret} to generate $\{u_t\}_{t=1}^T$ and Proposition~\ref{prop:RO-strong-convex-onlineSP} to generate $\{x_t\}_{t=1}^T$ using increasing weights $\theta_t = \frac{2t}{T(T+1)}$. Here, both $x_t$ and $u_t$ are computed with the knowledge of only past iterates $x_{t-1},u_{t-1}$.
\end{theorem}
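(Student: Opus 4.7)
The plan is to verify that the two propositions can be applied simultaneously (with the same weights and with their information requirements compatible), and then to invoke the feasibility oracle of Theorem~\ref{thm:robust-feas-oracle}.

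First I would verify compatibility of the two algorithms. Both Proposition~\ref{prop:RO-strong-convex-rate-weighted-regret} and Proposition~\ref{prop:RO-strong-convex-onlineSP} are built on Algorithm~\ref{alg:md} (generalized Mirror Descent), which by Remark~\ref{rem:MDnon-anticipatory} is non-anticipatory: the iterate $z_t$ depends only on $z_{t-1}$ and $\xi_{t-1}$. Unpacking this for our setting, at iteration $t$ the update rule for $\{u_t\}$ requires, for each $i\in[m]$, the vector $\xi_{t-1} = -\grad_u f^i(x_{t-1},u_{t-1}^i)$, which uses only $x_{t-1}$ and $u_{t-1}$. Similarly, the update rule for $\{x_t\}$ requires $\xi_{t-1} = \grad_x f^{i(t-1)}(x_{t-1},u_{t-1}^{i(t-1)})$ with $i(t-1)=\argmax_i f^i(x_{t-1},u_{t-1}^i)$, which again uses only $x_{t-1}$ and $u_{t-1}$. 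Hence both sequences can be updated in parallel using only the pair $(x_{t-1},u_{t-1})$ from the previous iteration, and both propositions apply with the common convex combination weights $\theta_t = 2t/(T(T+1))$.

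Next, I would collect the resulting regret bounds. Proposition~\ref{prop:RO-strong-convex-rate-weighted-regret} yields
\[
\epsilon^\circ(\{x_t,u_t,\theta_t\}_{t=1}^T) \;\leq\; \frac{2G_U^2}{\alpha_U(T+1)} \;=\; O(1/T),
\]
while Proposition~\ref{prop:RO-strong-convex-onlineSP} yields
\[
\epsilon^\bullet(\{x_t,u_t,\theta_t\}_{t=1}^T) \;\leq\; \frac{2G_X^2}{\alpha_X(T+1)} \;=\; O(1/T).
\]

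Finally, I would apply Theorem~\ref{thm:robust-feas-oracle} with any fixed $\tau\in(0,1)$. After $T$ iterations we evaluate the quantity $M_T := \max_{i\in[m]}\sum_{t=1}^T \theta_t f^i(x_t,u_t^i)$. If $M_T \leq (1-\tau)\epsilon$, then combined with $\epsilon^\circ \leq \tau\epsilon$ we certify that $\bar{x}_T = \sum_{t=1}^T\theta_t x_t$ is $\epsilon$-feasible; if $M_T > (1-\tau)\epsilon$, then combined with $\epsilon^\bullet \leq (1-\tau)\epsilon$ we certify infeasibility. Choosing $T$ large enough so that both $O(1/T)$ bounds drop below $\min(\tau,1-\tau)\epsilon$ requires $T = O(1/\epsilon)$ iterations, giving the stated rate.

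The only subtle point — and the place I would be most careful — is the verification that using the same increasing weights $\theta_t = 2t/(T(T+1))$ in both propositions is actually admissible: the choice of step sizes $\gamma_t = 2/(\alpha\,(t+1))$ prescribed by Theorem~\ref{thm:OCO-strongly-convex} is independent of the weights $\theta_t$, so the two Mirror Descent recursions never conflict. The remaining mild bookkeeping (checking that Assumption~\ref{ass:OCO-strong-convex} transfers correctly to the surrogate functions $f_t^i(u^i)=-f^i(x_t,u^i)$ and $h_t(x)=\max_{i\in[m]} f^i(x,u_t^i)$) is already handled inside the two cited propositions, so no further work is needed.
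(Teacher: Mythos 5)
Your proposal is correct and follows exactly the route the paper intends: the paper explicitly omits this proof as a ``straightforward application of the relevant propositions,'' and you have filled it in faithfully --- checking the non-anticipatory compatibility of the two Mirror Descent recursions, collecting the two $O(1/T)$ bounds from Propositions~\ref{prop:RO-strong-convex-rate-weighted-regret} and~\ref{prop:RO-strong-convex-onlineSP} under the common weights $\theta_t = 2t/(T(T+1))$, and invoking Theorem~\ref{thm:robust-feas-oracle} to conclude $T = O(1/\epsilon)$. No gaps.
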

\begin{theorem}\label{thm:RO-strong-u-smooth-x}
Suppose that Assumptions~\ref{ass:RO-prox-setup},~\ref{ass:RO-convexity-functions},~\ref{ass:RO-strong-convex-u}~and~\ref{ass:RO-smooth-x} hold. Then we can solve \eqref{eqn:RO-feas-problem} to within $\epsilon$-approximation in $T = O(\sqrt{log(m)}/\epsilon)$ iterations by employing Proposition~\ref{prop:RO-strong-convex-rate-weighted-regret} to generate $\{u_t\}_{t=1}^T$ and Proposition~\ref{prop:RO-smooth-rate-onlineSP} to generate $\{x_t,y_t\}_{t=1}^T$ using increasing weights $\theta_t = \frac{2t}{T(T+1)}$. Here, $u_t$ is computed with knowledge of $x_{t-1},u_{t-1}$, while $[x_t;y_t]$ is computed with the knowledge of $u_{t-1},u_t$ and $[x_{t-1};y_{t-1}]$.
\end{theorem}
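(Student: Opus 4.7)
The plan is to prove this by combining Propositions~\ref{prop:RO-strong-convex-rate-weighted-regret} and~\ref{prop:RO-smooth-rate-onlineSP} and feeding the resulting bounds into the feasibility oracle of Theorem~\ref{thm:robust-feas-oracle}. By Observation~\ref{obs:RO-terms-OCO}, $\epsilon^\circ(\{x_t,u_t,\theta_t\}_{t=1}^T)$ is a maximum of $m$ weighted regret terms in the $u^i$-variables and $\epsilon^\bullet(\{x_t,u_t,y_t,\theta_t\}_{t=1}^T)$ is a weighted online SP gap in the $(x,y)$-variables, so the required control of these quantities can be obtained by running Algorithm~\ref{alg:md} on each $u^i$-sequence (legitimate via strong concavity of $f^i$ in $u^i$ under Assumption~\ref{ass:RO-strong-convex-u}) and Algorithm~\ref{alg:mp} on the saddle operator of $\phi_t(x,y)=\sum_{i=1}^m y^{(i)}f^i(x,u_t^i)$ (legitimate via smoothness of $f^i$ in $x$ under Assumption~\ref{ass:RO-smooth-x}, with the joint smoothness parameter for $[x;y]$ supplied by Observation~\ref{obs:RO-joint-domain-setup}). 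Both processes are driven by the common weights $\theta_t=2t/(T(T+1))$, which each proposition explicitly accommodates.

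The main obstacle, and the reason this is more than a direct corollary, is the scheduling issue highlighted in Remark~\ref{rem:RO-anticipativity}: Algorithm~\ref{alg:mp} is $1$-lookahead and needs access to the current function $\phi_t$, equivalently to $u_t$, before producing $[x_t;y_t]$. One must therefore check that $u_t$ itself can be produced without needing $x_t$. This is precisely where the non-anticipatory nature of Algorithm~\ref{alg:md} (see Remark~\ref{rem:MDnon-anticipatory}) rescues the scheme: the $u$-update $u_t=\Prox_{u_{t-1}}(\gamma_{t-1}\xi_{t-1})$ with $\xi_{t-1}=-\grad_u f^i(x_{t-1},u_{t-1}^i)$ uses only past iterates. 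Thus the causal order inside iteration $t$ is (i) compute $u_t$ from $(x_{t-1},u_{t-1})$; (ii) compute $[x_t;y_t]=\Prox_{v_t}(\gamma_t\eta_t)$ with $\eta_t=\theta_t[\grad_x\phi_t(v_t);-\grad_y\phi_t(v_t)]$, using the $v_t$ that was carried over from the previous iteration together with the just-produced $u_t$; (iii) compute $v_{t+1}=\Prox_{v_t}(\gamma_t\xi_t)$ with $\xi_t=\theta_t[\grad_x\phi_t(z_t);-\grad_y\phi_t(z_t)]$, which uses only $u_t$ and $z_t=[x_t;y_t]$. No circular dependency arises, and both propositions apply to the sequences they produce.

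With the scheduling validated, the quantitative finish is routine. For $\theta_t=2t/(T(T+1))$ we have $\sup_{t\in[T]}\theta_t=2/(T+1)$, so Proposition~\ref{prop:RO-strong-convex-rate-weighted-regret} yields $\epsilon^\circ\le 2G_U^2/(\alpha_U(T+1))=O(1/T)$, while Proposition~\ref{prop:RO-smooth-rate-onlineSP} combined with Observation~\ref{obs:RO-joint-domain-setup} (which gives $\Omega_{X,Y}=1$ and $L_{X,Y}=L_X\Omega_X+2G_X\sqrt{\Omega_X\log m}$) yields $\epsilon^\bullet\le 2\Omega_{X,Y}L_{X,Y}/(T+1)=O(\sqrt{\log m}/T)$. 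The latter is the binding rate. Fixing any $\tau\in(0,1)$ and choosing $T=O(\sqrt{\log m}/\epsilon)$ large enough to enforce $\epsilon^\circ\le\tau\epsilon$ and $\epsilon^\bullet\le(1-\tau)\epsilon$ simultaneously, Theorem~\ref{thm:robust-feas-oracle} then either certifies $\bar{x}_T=\sum_{t=1}^T\theta_t x_t$ as $\epsilon$-feasible for \eqref{eqn:RO-feas-problem} or correctly declares it infeasible, which is exactly the claim.
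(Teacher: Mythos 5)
Your proposal is correct and is precisely the ``straightforward application of the relevant propositions'' that the paper omits: you combine Proposition~\ref{prop:RO-strong-convex-rate-weighted-regret} and Proposition~\ref{prop:RO-smooth-rate-onlineSP} under the common weights $\theta_t = 2t/(T(T+1))$, verify via Remarks~\ref{rem:MDnon-anticipatory} and~\ref{rem:anticipativity} that the update order ($u_t$ from past iterates first, then the $1$-lookahead $[x_t;y_t]$ using $u_t$) has no circular dependency, and feed the resulting $O(1/T)$ and $O(\sqrt{\log m}/T)$ bounds into Theorem~\ref{thm:robust-feas-oracle}. The only (cosmetic) imprecision is at the very end: in the infeasibility branch it is the robust feasibility problem \eqref{eqn:RO-feas-problem} that is declared infeasible (based on evaluating $\max_{i\in[m]}\sum_{t=1}^T\theta_t f^i(x_t,u_t^i)$ against $(1-\tau)\epsilon$), not the point $\bar{x}_T$.
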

\begin{theorem}\label{thm:RO-smooth-u-strong-x}
Suppose that Assumptions~\ref{ass:RO-prox-setup},~\ref{ass:RO-convexity-functions},~\ref{ass:RO-smooth-u}~and~\ref{ass:RO-strong-convex-x} hold. Then we can solve \eqref{eqn:RO-feas-problem} to within $\epsilon$-approximation in $T = O(1/\epsilon)$ iterations by employing Proposition~\ref{prop:RO-smooth-rate-weighted-regret} to generate $\{u_t\}_{t=1}^T$ and Proposition~\ref{prop:RO-strong-convex-onlineSP} to generate $\{x_t\}_{t=1}^T$ using increasing weights $\theta_t = \frac{2t}{T(T+1)}$. Here, $x_t$ is computed with knowledge of $x_{t-1},u_{t-1}$, while $u_t$ is computed with the knowledge of $u_{t-1},x_{t-1},x_t$.
\end{theorem}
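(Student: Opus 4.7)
The plan is to reduce to Theorem~\ref{thm:robust-feas-oracle} by controlling the two regret surrogates $\epsilon^\circ(\{x_t,u_t,\theta_t\}_{t=1}^T)$ and $\epsilon^\bullet(\{x_t,u_t,\theta_t\}_{t=1}^T)$ simultaneously at rate $O(1/T)$, and then inverting to get $T=O(1/\epsilon)$. With the increasing weights $\theta_t=\tfrac{2t}{T(T+1)}$ we have $\sup_{t\in[T]}\theta_t=\theta_T=\tfrac{2}{T+1}$, which is exactly the scaling needed to turn the lookahead smooth bound of Proposition~\ref{prop:RO-smooth-rate-weighted-regret} into an $O(1/T)$ estimate, while the strongly convex bound of Proposition~\ref{prop:RO-strong-convex-onlineSP} is already $O(1/T)$ for the same weights.

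First, I would specify the generation scheme carefully so that the information requirements of both subroutines are met, as flagged in Remark~\ref{rem:RO-anticipativity}. At iteration $t$, first compute $x_t$ by the Mirror Descent step from Proposition~\ref{prop:RO-strong-convex-onlineSP} using only $(x_{t-1},u_{t-1})$ together with $i(t-1)=\argmax_{i\in[m]} f^i(x_{t-1},u_{t-1}^i)$ and the subgradient $\grad_x f^{i(t-1)}(x_{t-1},u_{t-1}^{i(t-1)})$ carried over from the previous step; this is legitimate because MD is non-anticipatory (Remark~\ref{rem:MDnon-anticipatory}). Having produced $x_t$, use it to form $f_t^i(u^i)=-f^i(x_t,u^i)$ and then perform, for each $i\in[m]$, the Mirror Prox step of Proposition~\ref{prop:RO-smooth-rate-weighted-regret} using $\eta_t=-\theta_t\grad_u f^i(x_t,v_t^i)$ and $\xi_t=-\theta_t\grad_u f^i(x_t,u_t^i)$; this is legitimate because MP is $1$-lookahead in $u$ and the needed query point is $x_t$, which has just been constructed (Remark~\ref{rem:anticipativity}). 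Thus the two updates are temporally consistent.

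Next, I would invoke the two propositions in turn. Proposition~\ref{prop:RO-smooth-rate-weighted-regret} applied coordinatewise in $i$ gives, for every $i\in[m]$,
\[
\sup_{u^i\in U^i}\sum_{t=1}^T \theta_t f^i(x_t,u^i)-\sum_{t=1}^T \theta_t f^i(x_t,u_t^i)\leq \Omega_U L_U\sup_{t\in[T]}\theta_t=\frac{2\Omega_U L_U}{T+1},
\]
and taking the max over $i$ yields $\epsilon^\circ(\{x_t,u_t,\theta_t\}_{t=1}^T)=O(1/T)$. Proposition~\ref{prop:RO-strong-convex-onlineSP} gives directly
\[
\epsilon^\bullet(\{x_t,u_t,\theta_t\}_{t=1}^T)\leq \frac{2G_X^2}{\alpha_X(T+1)}=O(1/T).
\]
Finally, I would plug these into Theorem~\ref{thm:robust-feas-oracle} with any fixed $\tau\in(0,1)$: choosing $T$ large enough that both bounds are below $\tau\epsilon$ and $(1-\tau)\epsilon$ respectively, and then inspecting $\max_{i}\sum_t\theta_t f^i(x_t,u_t^i)$ to branch between feasibility and infeasibility, gives an $\epsilon$-accurate answer to \eqref{eqn:RO-feas-problem}. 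Solving $O(1/T)\leq \const\cdot\epsilon$ for $T$ yields $T=O(1/\epsilon)$.

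The main subtlety I expect is purely bookkeeping, not analytic: verifying in detail that the interlocking order ``compute $x_t$, then compute $u_t$'' respects both Remark~\ref{rem:MDnon-anticipatory} (MD only needs past data, so $x_t$ may depend on $u_{t-1}$ but not $u_t$) and Remark~\ref{rem:anticipativity} (MP needs a current gradient query, here $\grad_u f^i(x_t,\cdot)$, which is available exactly because $x_t$ is produced first). Once this compatibility is written out, the quantitative estimates are immediate from the two propositions and Theorem~\ref{thm:robust-feas-oracle}; no further analysis is required.
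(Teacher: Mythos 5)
Your proposal is correct and is precisely the argument the paper has in mind: the paper explicitly omits the proofs of Theorems~\ref{thm:RO-strong-both}--\ref{thm:RO-smooth-u-strong-x} as ``straightforward applications of the relevant propositions,'' and your writeup carries out exactly that combination --- bounding $\epsilon^\circ$ via Proposition~\ref{prop:RO-smooth-rate-weighted-regret} and $\epsilon^\bullet$ via Proposition~\ref{prop:RO-strong-convex-onlineSP} with the common weights $\theta_t=\frac{2t}{T(T+1)}$, verifying the ``compute $x_t$ first, then $u_t$'' ordering demanded by Remarks~\ref{rem:MDnon-anticipatory} and~\ref{rem:anticipativity}, and feeding both $O(1/T)$ bounds into Theorem~\ref{thm:robust-feas-oracle}. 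No gaps.
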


\begin{remark}\label{rem:RO-other-oracles}
As shown in \cite[Sections 4.2, 4.3]{Ho-NguyenKK2016RO}, OCO algorithms are not the only ways to bound the terms $\epsilon^\circ(\{x_t,u_t,\theta_t\}_{t=1}^T)$ and $\epsilon^\bullet(\{x_t,u_t,\theta_t\}_{t=1}^T)$. Instead, we can use pessimization oracles from \cite{MutapcicBoyd2009} to bound $\epsilon^\circ(\{x_t,u_t,\theta_t\}_{t=1}^T)$, or nominal feasibility oracles from \cite{BenTalHazan2015} to bound $\epsilon^\bullet(\{x_t,u_t,\theta_t\}_{t=1}^T)$. A reasonable idea is to combine these oracles with Propositions~\ref{prop:RO-strong-convex-rate-weighted-regret},~\ref{prop:RO-strong-convex-onlineSP},~\ref{prop:RO-smooth-rate-weighted-regret},~\ref{prop:RO-smooth-rate-onlineSP} to obtain improved rates. However, we meet a challenge similar to Remark~\ref{rem:RO-anticipativity}. In iteration $t$, the pessimization oracles of \cite{MutapcicBoyd2009} need knowledge of $x_t$ to compute $u_t$ (see \cite[Remark 4.1]{Ho-NguyenKK2016RO}), while the nominal feasibility oracle of \cite{BenTalHazan2015} needs knowledge of $u_t$ to compute $x_t$ (see \cite[Remark 4.2]{Ho-NguyenKK2016RO}). Therefore, only Propositions~\ref{prop:RO-strong-convex-rate-weighted-regret}~and~\ref{prop:RO-strong-convex-onlineSP} may be used to improve the oracle-based rates. 
Nevertheless, this still allows us to partially answer the following open question from \cite[Section 5]{BenTalHazan2015}: is it possible to improve the $O(1/\epsilon^2)$ oracle calls required to solve \eqref{eqn:RO-feas-problem}? Our results imply the following partial affirmative answer: if every $f^i(x,u^i)$ is \emph{strongly concave} in $u^i$, then Proposition~\ref{prop:RO-strong-convex-rate-weighted-regret} can be employed to generate $\{u_t\}_{t=1}^T$, which guarantees a solution to \eqref{eqn:RO-feas-problem} in $T=O(1/\epsilon)$ iterations. It remains open whether a provable lower bound on the number of iterations exists with or without additional favorable structure such as strong concavity.
\epr
\end{remark}

\section{Application: Joint Estimation-Optimization}\label{sec:JEO-app}

In this section, we examine the joint estimation-optimization (JEO) problems \eqref{eqn:JEO-problem}-\eqref{eqn:JEO-estimation-problem}. 
We first establish a relation between iterative methods for JEO problem and regret minimization in OCO. We then show that our results from Section~\ref{sec:weighted-regret-algorithms} can recover
most of the results from \cite{AhmadiShanbhag2014}, e.g., when $f$ is smooth or non-smooth and is not strongly convex, and immediately extend these to proximal setups. 
In addition, we cover the case when $f$ is strongly convex but non-smooth, which 
as stated in the introduction, 
is not examined in the prior literature  \cite{JiangShanbhag2013,JiangShanbhag2014,AhmadiShanbhag2014}.
We first state our basic setup assumptions on the domains and the function $f$.
\begin{assumption}\label{ass:JEO-basic-assumptions}\emph{}
\begin{itemize}
\item The domain $X$ is convex and admits a proximal setup as in Section~\ref{sec:prox-setup} with set width $\Omega$. Furthermore, it is compact, with $\max_{x,u \in X} \|x - u\| \leq D < \infty$.
\item For all $u \in U$, the function $f(\cdot,u)$ is convex in $x \in X$, and is Lipschitz continuous, i.e., the gradients $\grad_x f(x,u)$ are bounded by a constant $G_{f,X} \!>\! 0$ independent of $u$.
\end{itemize}
\end{assumption}
\begin{assumption}\label{ass:JEO-strongly-convex}
For any fixed $u \in U$, strong convexity, i.e., Assumption~\ref{ass:OCO-strong-convex} holds for any 
 $f(\cdot,u)$ with uniform strong convexity parameter $\alpha_{f,X} \!>\! 0$ independent of $u$.
\end{assumption}
\begin{assumption}\label{ass:JEO-smooth}
For any fixed $u \in U$, smoothness, i.e., Assumption~\ref{ass:OCO-smooth} holds for the function $f(\cdot,u)$ with uniform smoothness parameter $L_{f,X} \geq 0$ independent of $u$.
\end{assumption}

As in \cite{AhmadiShanbhag2014}, we also assume access to a sequence of points $\{u_t\}_{t=1}^T$ which approximate the correct data $u^*$ in \eqref{eqn:JEO-estimation-problem}. Whenever a new approximation $u_{t-1}$ is revealed, we generate a point $x_t$ based on this new data. After $T$ iterations, we build the point $\bar{x}_T = \sum_{t=1}^T \theta_t x_t \in X$ through averaging. Using this scheme, we bound the approximation quality of $\bar{x}_T$ by two terms: an affine regret term based on the sequences $\{x_t,u_t\}_{t=1}^T$ and the function $f$, and a penalty term for our inability to work with the correct data $u^*$. We start with a simple lemma, which establishes the link between JEO and OCO.

\begin{lemma}\label{lemma:JEO-regret-f-strongly-convex}
Suppose that Assumption~\ref{ass:JEO-basic-assumptions} holds. Given sequences $\{x_t,u_t\}_{t=1}^T$ and weights $\theta_t \in \Delta_T$, define $q_t(x) := \la \grad_x f(x_t,u_t), x \ra$ and $\bar{x}_T := \sum_{t=1}^T \theta_t x_t \in X$. Then 
\begin{align*} 
f(\bar{x}_T,u^*) - \min_{x \in X} f(x,u^*) \leq &~\sum_{t=1}^T \theta_t q_t(x_t) - \inf_{x \in X} \sum_{t=1}^T \theta_t q_t(x) \\
&\quad + D \sum_{t=1}^T \theta_t \| \grad_x f(x_t,u_t) - \grad_x f(x_t,u^*) \|_*. 
\end{align*}
If, in addition, Assumption~\ref{ass:JEO-strongly-convex} holds, then the same holds with $q_t(x) := \la \grad_x f(x_t,u_t), x \ra + \alpha_{f,X} V_{x_t}(x)$.
Furthermore, for either definition of the function $q_t$,
\begin{align*}
\left|f(\bar{x}_T,u_T) - \min_{x \in X} f(x,u^*)\right| ~\leq &~ \sum_{t=1}^T \theta_t q_t(x_t) - \inf_{x \in X} \sum_{t=1}^T \theta_t q_t(x)
 + |f(\bar{x}_T,u_T) - f(\bar{x}_T,u^*)| \\ & \qquad + D \sum_{t=1}^{T} \theta_t \| \grad_x f(x_t,u_t) - \grad_x f(x_t,u^*) \|_*.
\end{align*}
\end{lemma}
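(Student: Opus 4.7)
The plan is to establish the chain of inequalities by combining Jensen's inequality, the (strong) convexity subgradient inequality for $f(\cdot,u^*)$, and a controlled substitution of $\grad_x f(x_t,u^*)$ by $\grad_x f(x_t,u_t)$ whose error is absorbed into the last penalty term.

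First I would apply Jensen's inequality to $f(\cdot,u^*)$, which is convex in $x$ by Assumption~\ref{ass:JEO-basic-assumptions}, to get $f(\bar{x}_T,u^*) \leq \sum_{t=1}^T \theta_t f(x_t,u^*)$. Next, for each $t$ and any $x \in X$, I would apply the subgradient inequality in its appropriate form: under only convexity, $f(x_t,u^*) - f(x,u^*) \leq \la \grad_x f(x_t,u^*), x_t - x \ra$; under Assumption~\ref{ass:JEO-strongly-convex}, the strong convexity refinement gives $f(x_t,u^*) - f(x,u^*) \leq \la \grad_x f(x_t,u^*), x_t - x \ra - \alpha_{f,X} V_{x_t}(x)$. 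Summing with weights $\theta_t$, this bounds $\sum_t \theta_t f(x_t,u^*) - \sum_t \theta_t f(x,u^*)$ from above.

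The key step is then swapping $\grad_x f(x_t,u^*)$ with $\grad_x f(x_t,u_t)$ via
\[
\la \grad_x f(x_t,u^*), x_t - x \ra = \la \grad_x f(x_t,u_t), x_t - x \ra + \la \grad_x f(x_t,u^*) - \grad_x f(x_t,u_t), x_t - x \ra,
\]
and bounding the second inner product by $\|\grad_x f(x_t,u_t) - \grad_x f(x_t,u^*)\|_* \cdot \|x_t - x\|$ via the definition of dual norm. Since $x_t, x \in X$, the compactness from Assumption~\ref{ass:JEO-basic-assumptions} gives $\|x_t - x\| \leq D$. Recognizing that $q_t(x_t) - q_t(x)$ equals $\la \grad_x f(x_t,u_t), x_t - x \ra$ in the convex case, and additionally picks up the $-\alpha_{f,X}V_{x_t}(x)$ term (using $V_{x_t}(x_t)=0$) in the strongly convex case, the bound becomes $\sum_t \theta_t [f(x_t,u^*) - f(x,u^*)] \leq \sum_t \theta_t [q_t(x_t) - q_t(x)] + D\sum_t \theta_t \|\grad_x f(x_t,u_t) - \grad_x f(x_t,u^*)\|_*$. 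Specializing $x$ to the minimizer of $f(\cdot,u^*)$ on the left and taking the infimum on the right over $x \in X$ yields the first inequality.

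For the final inequality with $f(\bar{x}_T,u_T)$, I would use the triangle inequality
\[
|f(\bar{x}_T,u_T) - \min_{x \in X} f(x,u^*)| \leq |f(\bar{x}_T,u_T) - f(\bar{x}_T,u^*)| + |f(\bar{x}_T,u^*) - \min_{x \in X} f(x,u^*)|,
\]
observe that the second term on the right is nonnegative (since $\bar{x}_T \in X$) so its absolute value bars can be dropped, and apply the already-established first inequality to bound it. The main obstacle is just careful bookkeeping, particularly making sure in the strongly convex case that the $V_{x_t}(x_t) = 0$ cancellation allows the Bregman term to appear exclusively in $q_t(x)$ (and not in $q_t(x_t)$), so that the right-hand side is genuinely the affine regret of the sequence $\{x_t\}$ on the functions $\{q_t\}$ defined in the statement.
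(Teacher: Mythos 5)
Your proposal is correct and follows essentially the same route as the paper's proof: the subgradient (or strong-convexity) inequality for $f(\cdot,u^*)$, the gradient-swap bounded by $D\|\grad_x f(x_t,u_t)-\grad_x f(x_t,u^*)\|_*$ via the dual norm, the identification $\la \grad_x f(x_t,u_t),x_t-x\ra-\alpha_{f,X}V_{x_t}(x)=q_t(x_t)-q_t(x)$ using $V_{x_t}(x_t)=0$, and the triangle inequality for the last display. The only cosmetic difference is that you invoke Jensen's inequality up front while the paper sums the per-term inequalities first and then appeals to convexity of $f(\cdot,u^*)$; these are the same step.
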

\begin{proof}
We will first consider the case when Assumption~\ref{ass:JEO-strongly-convex} holds and work with $q_t(x) := \la \grad_x f(x_t,u_t), x \ra + \alpha_{f,X} V_{x_t}(x)$.  
If Assumption~\ref{ass:JEO-strongly-convex} does not hold, the same proof applies with $\alpha_{f,X} = 0$. Assumption~\ref{ass:JEO-strongly-convex} implies that for any $x \in X$, $f(x_t,u^*) - f(x,u^*) \leq \la \grad_x f(x_t,u^*), x_t - x \ra - \alpha_{f,X} V_{x_t}(x)$. In addition, for any $t\in[T]$,
\begin{align*}
\la \grad_x f(x_t,u^*), x_t - x \ra &= \la \grad_x f(x_t,u_t), x_t - x \ra + \la \grad_x f(x_t,u^*) - \grad_x f(x_t,u_t), x_t - x \ra,\\
&\leq \la \grad_x f(x_t,u_t), x_t - x \ra + D \| \grad_x f(x_t,u_t) - \grad_x f(x_t,u^*) \|_*,
\end{align*}
where the inequality follows from Cauchy-Schwarz applied to $\la \grad_x f(x_t,u^*) - \grad_x f(x_t,u_t), x_t - x \ra$ and recognizing $\|x_t - x\| \leq D$ from Assumption~\ref{ass:JEO-basic-assumptions}. After subtracting $\alpha_{f,X} V_{x_t}(x)$ from both sides of this inequality for $t$, multiplying the resulting inequalities with $\theta_t$, summing them over $t\in[T]$ and using strong convexity of $f(\cdot,u^*)$, we arrive at:
\begin{align*}
f(\bar{x}_T,u^*) - f(x,u^*) \leq &\sum_{t=1}^T \theta_t (\la \grad_x f(x_t,u_t), x_t - x \ra - \alpha_{f,X} V_{x_t}(x)) \\ 
&\quad + D \sum_{t=1}^T \theta_t \| \grad_x f(x_t,u_t) - \grad_x f(x_t,u^*) \|_*.
\end{align*}
Then the first result follows from $\la \grad_x f(x_t,u_t), x_t - x \ra - \alpha_{f,X} V_{x_t}(x) = q_t(x_t) - q_t(x)$, and taking the maximum of both sides over $x \in X$. 
The last result follows from the triangle inequality
\[ \left|f(\bar{x}_T,u_T) - \min_{x \in X} f(x,u^*)\right| \leq |f(\bar{x}_T,u_T) - f(\bar{x}_T,u^*)| + \left|f(\bar{x}_T,u^*) - \min_{x \in X} f(x,u^*)\right|. \]
\end{proof}

The last result of Lemma~\ref{lemma:JEO-regret-f-strongly-convex} provides a bound on the gap between a computable quantity $f(\bar{x}_T,u_T)$ and the true optimum defined by the correct data $u^*$. This bound incurs additional penalty terms, $D \sum_{t=1}^T \theta_t \| \grad_x f(x_t,u_t) - \grad_x f(x_t,u^*) \|_*$ and $|f(\bar{x}_T,u_T) - f(\bar{x}_T,u^*)|$, which disappear when $u_t = u^*$ for all $t\in [T]$. Hence, these penalty terms can be interpreted as the `cost' of not working with the correct data $u^*$.

In order to ensure high quality solutions to the JEO problem, we need to bound the gap $\left|f(\bar{x}_T,u_T) - \min_{x \in X} f(x,u^*)\right|$, and by Lemma~\ref{lemma:JEO-regret-f-strongly-convex} this entails bounding three quantities: the regret term associated with the functions $q_t$ and the two penalty terms. We next demonstrate how the results from \cite{AhmadiShanbhag2014} on bounding the penalty terms can be recovered from our OCO based analysis. We work under the common assumption of \cite{AhmadiShanbhag2014} that $g$ is smooth and strongly convex, which assures the existence of algorithms with linear convergence $\|u_t-u^*\| = O(\beta^t)$ for our sequence $u_t$, and some mild Lipschitz continuity assumptions on $f(x,\cdot)$ and $\grad_x f(x,\cdot)$. Note that essentially the same results are achievable even if we assume $g$ is non-smooth and strongly convex. In such a case we can quarantine $\|u_t - u^*\| = O(1/t)$, and using this the modification of the other parts of Fact~\ref{fact:u-convergence} below with the replacement of little-$o$ notation with big-$O$ notation is immediate.
\begin{assumption}\label{ass:JEO-u-variation}\emph{}
\begin{itemize}
\item The function $g$ in \eqref{eqn:JEO-estimation-problem} is strongly convex and smooth in $u$.
\item There exists $G_{f,U} > 0$ such that for all $u,u' \in U$ and $x \in X$, it holds that $|f(x,u) - f(x,u')| \leq G_{f,U} \|u - u'\|$.
\item There exists $L_{f,U} > 0$ such that for all $u,u' \in U$ and $x \in X$, we have 
\[ \| \grad_x f(x,u) - \grad_x f(x,u') \|_* \leq L_{f,U} \|u - u'\|. \]
\end{itemize}
\end{assumption}
Under Assumption~\ref{ass:JEO-u-variation}, we can bound the two penalty terms in terms of the norms $\|u_t-u^*\|$ as:
\begin{align*}
|f(\bar{x}_T,u_T)-f(\bar{x}_T,u^*)| &\leq G_{f,U} \|u_T - u^*\|\\
D\, \sum_{t=1}^T \theta_t \| \grad_x f(x_t,u_t) - \grad_x f(x_t,u^*) \|_* &\leq D\, L_{f,U} \sum_{t=1}^T \theta_t \| u_t - u^* \|.
\end{align*}
Since we assume that $\|u_t-u^*\| = O(\beta^t)$, we can further bound the penalty terms using the following fact.
\begin{fact}\label{fact:u-convergence}
Consider a sequence $\{u_t\}_{t=1}^T$ such that $\|u_t- u^*\| = O(\beta^t)$ for some $0<\beta<1$. Then
\begin{enumerate}[(i)]
\item $\| u_T - u^* \| = o(1/T)$. 
\item For $\theta_t=1/T$, we have $\sum_{t=1}^T \theta_t  \| u_t - u^* \| = O(1/T) = o(1/\sqrt{T})$. 
\item For $\theta_t=\frac{2t}{T(T+1)}$, we have $\sum_{t=1}^T \theta_t  \| u_t - u^* \| = O(1/T^2) = o(1/T)$. 
\end{enumerate}
\end{fact}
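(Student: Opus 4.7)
The plan is straightforward: unpack the hypothesis as $\|u_t - u^*\| \le C \beta^t$ for some constant $C > 0$ and all $t$, and exploit the fact that geometric decay dominates any polynomial weighting. Each of the three claims then reduces to estimating a simple weighted sum of $\beta^t$'s against a closed-form convergent series.

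For part (i), I would just multiply through by $T$ and invoke the standard fact $T \beta^T \to 0$ as $T \to \infty$. This gives $T \|u_T - u^*\| \le C T \beta^T \to 0$, i.e., $\|u_T - u^*\| = o(1/T)$. For part (ii), with $\theta_t = 1/T$, I would estimate via the geometric series
\[
\sum_{t=1}^T \theta_t \|u_t - u^*\| \;\le\; \frac{C}{T} \sum_{t=1}^T \beta^t \;\le\; \frac{C \beta}{T(1-\beta)} \;=\; O(1/T),
\]
and then note $O(1/T) = o(1/\sqrt T)$ since $\sqrt{T}\cdot(1/T) = 1/\sqrt T \to 0$.

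For part (iii), with the increasing weights $\theta_t = 2t/(T(T+1))$, the argument is analogous but uses $\sum_{t=1}^\infty t\beta^t = \beta/(1-\beta)^2$, a convergent series whose partial sums are uniformly bounded. I would write
\[
\sum_{t=1}^T \theta_t \|u_t - u^*\| \;\le\; \frac{2C}{T(T+1)} \sum_{t=1}^T t\beta^t \;\le\; \frac{2C \beta}{T(T+1)(1-\beta)^2} \;=\; O(1/T^2),
\]
and then $O(1/T^2) = o(1/T)$ is immediate.

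There is essentially no obstacle to overcome here — the proof is a routine asymptotic calculation. The only mild point worth flagging is the observation that the $T$-dependence comes entirely from the averaging prefactor ($1/T$ or $1/T(T+1)$), while the geometric sums $\sum \beta^t$ and $\sum t\beta^t$ are uniformly bounded in $T$ by their infinite-series limits. This is why the aggregated error decays at essentially the rate set by the prefactor alone, and why the final bounds are strictly better than the corresponding $O(\cdot)$ terms appearing elsewhere in the regret analysis, as claimed.
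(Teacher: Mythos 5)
Your proof is correct and follows essentially the same route as the paper's: bound the geometric sums $\sum_{t=1}^T \beta^t$ and $\sum_{t=1}^T t\beta^t$ by convergent series (the paper uses the exact partial-sum formula for the latter, you use the infinite-series bound, which is equivalent for this purpose) and let the averaging prefactor supply the decay rate. No gaps.
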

\begin{proof}
Because $\|u_t - u^*\| \leq O(\beta^t) \leq o\left(\frac{1}{T}\right)$, item $(i)$ follows immediately. For item $(ii)$, when $\theta_t={1\over T}$, we note that
\[ \sum_{t=1}^T \theta_t \| u_t - u^*\| \leq \frac{1}{T}\ O\!\left(\sum_{t=1}^T \beta^t\right) = O\left( \frac{1}{T} \right).\]
For item $(iii)$, when  $\theta_t={2t \over{T(T+1)}}$, we observe that
\begin{align*}
\sum_{t=1}^T \theta_t \| u_t - u^*\| &\leq \frac{2}{T(T+1)}\ O\!\left(\sum_{t=1}^T t \beta^t\right)\\
&\leq\frac{2}{T(T+1)}\ O\!\left( \frac{\beta(1 - (T+1)\beta^T + T \beta^{T+1})}{(1-\beta)^2}\right)
= O\!\left( \frac{1}{T^2} \right) = o\!\left(\frac{1}{T}\right).
\end{align*}
\end{proof}

To complete our bound of the gap $|f(\bar{x}_T,u_T) - \min_{x \in X} f(x,u^*)|$, it remains to bound the weighted regret term associated with the functions $q_t$ in Lemma~\ref{lemma:JEO-regret-f-strongly-convex}. We can do so by using our results from Section~\ref{sec:weighted-regret-algorithms}. We summarize the cases when $f(\cdot,u)$ is not strongly convex in the following remark. Note that these cases are covered by \cite[Propositions 4 and 6]{AhmadiShanbhag2014}.
\begin{remark}\label{rem:JEO-f-convex-rate}
Suppose that Assumption~\ref{ass:JEO-basic-assumptions} holds, and that we are given a sequence $\{u_t\}_{t=1}^T$ of points from $U$. Given $x_t$, define $q_t(x) = \la \grad_x f(x_t,u_t),x \ra$. By applying Theorem~\ref{thm:OCO-non-smooth} appropriately with uniform weights $\theta_t = 1/T$, we obtain the regret bound 
\[\sum_{t=1}^T \theta_t q_t(x_t) - \inf_{x \in X} \sum_{t=1}^T \theta_t q_t(x) \leq \sqrt{\frac{2 \Omega G_{f,X}^2}{T}} = O\left(\frac{1}{\sqrt{T}}\right).\]
By Fact~\ref{fact:u-convergence}, in this case the penalty terms in Lemma~\ref{lemma:JEO-regret-f-strongly-convex} are asymptotically negligible $o(1/\sqrt{T})$ compared to the regret bound. This then recovers the overall convergence rate of $O(1/\sqrt{T})$ for solving JEO under the basic Assumption~\ref{ass:JEO-basic-assumptions}, see \cite[Proposition 6]{AhmadiShanbhag2014}.

If, in addition, Assumption~\ref{ass:JEO-smooth} holds, then by applying Theorem~\ref{thm:OCO-smooth} appropriately with uniform weights $\theta_t = 1/T$, the regret associated with $q_t$ is bounded by
\[\sum_{t=1}^T \theta_t q_t(x_t) - \inf_{x \in X} \sum_{t=1}^T \theta_t q_t(x) \leq \frac{L_{f,X} \Omega}{T} = O\left(\frac{1}{T}\right).\]
By Fact~\ref{fact:u-convergence}, the penalty terms in this case are asymptotically equivalent $O(1/T)$ to the regret bound. Hence, we recover the overall convergence rate of $O(1/T)$ for solving JEO under Assumptions~\ref{ass:JEO-basic-assumptions}~and~\ref{ass:JEO-smooth}, see \cite[Proposition 4]{AhmadiShanbhag2014}. 

Notably, these rates achieved in the JEO framework are the \emph{same rates} for FOMs for solving \eqref{eqn:JEO-problem} for the corresponding classes of functions $f$ when the correct data $u^*$ is available.
\epr
\end{remark}

We now study the case where $f$ is non-smooth and strongly convex; this case was not covered in \cite{AhmadiShanbhag2014}.
\begin{theorem}\label{thm:JEO-f-strongly-convex}
Suppose that Assumptions~\ref{ass:JEO-basic-assumptions} and~\ref{ass:JEO-strongly-convex} hold, and that we are given a sequence $\{u_t\}_{t=1}^T$ of points from $U$. Given $x_t \in X$, define $q_t(x) = \la \grad_x f(x_t,u_t), x \ra + \alpha_{f,X} V_{x_t}(x)$. Running Algorithm~\ref{alg:md} with $z_t = x_t$, $\xi_t = \theta_t \grad_x f(x_t,u_t)$, weights $\theta_t = \frac{2t}{T(T+1)}$ and step sizes $\gamma_t = \frac{2}{\alpha(t+1)}$ for $t \in [T]$ results in the bound
\[
\sum_{t=1}^T \theta_t q_t(x_t) - \inf_{x \in X} \sum_{t=1}^T \theta_t q_t(x) \leq \frac{2 G_{f,X}^2}{\alpha_{f,X} (T+1)} = O\left(\frac{1}{T}\right).
\]
Furthermore, suppose that Assumption~\ref{ass:JEO-u-variation} holds, and that $\|u_t - u^*\| = O(\beta^t)$. Define $\bar{x}_T = \sum_{t=1}^T \theta_t x_t$. Then
\[ \left|f(\bar{x}_T,u_T) - \min_{x \in X} f(x,u^*)\right| = O\left(\frac{1}{T}\right) + o\left(\frac{1}{T}\right).\]
\end{theorem}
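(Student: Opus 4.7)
The plan is to reduce Theorem~\ref{thm:JEO-f-strongly-convex} to two already-established ingredients: the weighted regret bound for strongly convex losses from Theorem~\ref{thm:OCO-strongly-convex}, and the decomposition provided by Lemma~\ref{lemma:JEO-regret-f-strongly-convex} together with Fact~\ref{fact:u-convergence}.

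For the first displayed bound, I would set $f_t(x) := f(x, u_t)$ and check that Assumption~\ref{ass:OCO-strong-convex} is satisfied uniformly in $t$: Assumption~\ref{ass:JEO-basic-assumptions} provides the proximal setup for $X$ and the uniform gradient bound $\|\grad_x f(x,u_t)\|_* \leq G_{f,X}$, while Assumption~\ref{ass:JEO-strongly-convex} yields that $f_t(x) - \alpha_{f,X}\,\omega(x)$ is convex for every $t$. The functions $q_t(x) = \la \grad_x f(x_t,u_t),x\ra + \alpha_{f,X} V_{x_t}(x)$ are precisely those that appear in the strongly-convex regret upper bound of Lemma~\ref{lemma:OCO-strong-convex}. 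Therefore Theorem~\ref{thm:OCO-strongly-convex}, applied with the weights $\theta_t = 2t/(T(T+1))$ and step sizes $\gamma_t = 2/(\alpha_{f,X}(t+1))$, delivers exactly
\[
\sum_{t=1}^T \theta_t q_t(x_t) - \inf_{x \in X} \sum_{t=1}^T \theta_t q_t(x) \leq \frac{2 G_{f,X}^2}{\alpha_{f,X}(T+1)} = O\!\left(\frac{1}{T}\right).
\]

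For the second displayed bound, I would invoke the last inequality of Lemma~\ref{lemma:JEO-regret-f-strongly-convex} with the strongly convex definition of $q_t$, giving
\[
\bigl|f(\bar{x}_T,u_T) - \min_{x \in X} f(x,u^*)\bigr| \leq \underbrace{\Bigl[\sum_t \theta_t q_t(x_t) - \inf_x \sum_t \theta_t q_t(x)\Bigr]}_{(\mathrm{I})} + \underbrace{\bigl|f(\bar{x}_T,u_T) - f(\bar{x}_T,u^*)\bigr|}_{(\mathrm{II})} + \underbrace{D\sum_t \theta_t \|\grad_x f(x_t,u_t) - \grad_x f(x_t,u^*)\|_*}_{(\mathrm{III})}.
\]
Term $(\mathrm{I})$ is $O(1/T)$ by the first part. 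Assumption~\ref{ass:JEO-u-variation} bounds $(\mathrm{II}) \leq G_{f,U} \|u_T - u^*\|$ and $(\mathrm{III}) \leq D\, L_{f,U} \sum_t \theta_t \|u_t - u^*\|$. Since $\|u_t - u^*\| = O(\beta^t)$ by hypothesis, Fact~\ref{fact:u-convergence}(i) gives $(\mathrm{II}) = o(1/T)$, and Fact~\ref{fact:u-convergence}(iii), invoked with the same increasing weights $\theta_t = 2t/(T(T+1))$ used by our algorithm, gives $(\mathrm{III}) = O(1/T^2) = o(1/T)$. Adding the three pieces yields the claimed $O(1/T) + o(1/T)$ asymptotics.

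The proof is essentially an assembly of existing components rather than new analysis, so the main obstacle is purely bookkeeping: verifying that the weights, step sizes, and constants specified here line up exactly with the hypotheses of Theorem~\ref{thm:OCO-strongly-convex}, and ensuring the strongly convex variant of $q_t$ is the one invoked when applying Lemma~\ref{lemma:JEO-regret-f-strongly-convex}. The subtle conceptual point worth emphasizing is that the increasing weight schedule is doing double duty: it produces the $O(1/T)$ regret rate on the optimization side (despite non-smoothness of $f(\cdot,u)$) and, simultaneously, accelerates the averaged estimation error term $(\mathrm{III})$ to $O(1/T^2)$, so that the penalty for not knowing $u^*$ is dominated by the regret bound rather than vice versa.
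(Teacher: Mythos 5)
Your proposal is correct and follows essentially the same route as the paper's proof: verify that Assumptions~\ref{ass:JEO-basic-assumptions} and~\ref{ass:JEO-strongly-convex} instantiate Assumption~\ref{ass:OCO-strong-convex} so that Theorem~\ref{thm:OCO-strongly-convex} yields the $O(1/T)$ regret bound on the $q_t$, then use the last inequality of Lemma~\ref{lemma:JEO-regret-f-strongly-convex} together with Assumption~\ref{ass:JEO-u-variation} and Fact~\ref{fact:u-convergence}(i),(iii) to show both penalty terms are $o(1/T)$. Your closing observation about the increasing weights simultaneously driving the regret rate and the $O(1/T^2)$ decay of the averaged estimation error is a nice articulation of exactly the point the paper makes in the paragraph following the theorem.
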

\begin{proof}
Assumptions~\ref{ass:JEO-basic-assumptions}~and~\ref{ass:JEO-strongly-convex} ensure that the assumptions of Theorem~\ref{thm:OCO-strongly-convex} are met, which gives us the regret bound on $q_t$ (note also the equation in \eqref{eqn:weighted-regret-strongly-convex}). Then we use Lemma~\ref{lemma:JEO-regret-f-strongly-convex} to decompose the bound on $|f(\bar{x}_T,u_T) - \min_{x \in X} f(x,u^*)|$ into the regret term and the penalty terms. Also, from Assumption~\ref{ass:JEO-u-variation}  and  Fact~\ref{fact:u-convergence}, the penalty terms satisfy 
\begin{align*}
&|f(\bar{x}_T,u_T)-f(\bar{x}_T,u^*)| \leq G_{f,U} \|u_T-u^*\| = O(\beta^T) = o\left(\frac{1}{T}\right), \\
&D \sum_{t=1}^T \theta_t \| \grad_x f(x_t,u_t) - \grad_x f(x_t,u^*) \|_* \leq D\, L_{f,U} \sum_{t=1}^T \theta_t \| u_t - u^* \|= O\left(\frac{1}{T^2}\right) = o\left(\frac{1}{T}\right).  
\end{align*}
The result then follows.
\end{proof}

Notice that both penalty terms in Theorem~\ref{thm:JEO-f-strongly-convex} are $o(1/T)$, that is, asymptotically negligible compared to the $O(1/T)$ error. Thus, when the data generation process \eqref{eqn:JEO-estimation-problem} involves minimizing a smooth and strongly convex function $g$, the simultaneous JEO approach in Theorem~\ref{thm:JEO-f-strongly-convex} achieves the optimal offline rate of $O(1/T)$ for minimizing non-smooth strongly convex functions \cite[Theorem 3.13]{Bubeck2015}, plus some asymptotically negligible $o(1/T)$ penalty for not using the correct data. The analysis presented above depends crucially on the regret bound for the sequence of functions $\{q_t(x) = \la \grad_x f(x_t,u_t),x \ra + \alpha_{f,X} V_{x_t}(x)\}_{t=1}^T$. Therefore, by Remark~\ref{rem:stronglyConvDiscussion}, if we restricted ourselves to standard regret, we would only be able to get a bound of $O(\log(T)/T)$. Thus, our developments and analysis of weighted regret are fundamental in achieving the rate $O(1/T)$.

\section{Conclusion}\label{sec:Conclusions}
In this paper, we examine iterative solution techniques for RO and JEO through the lens of OCO and study their structure-based acceleration. For this purpose, we advance the line of research in OCO by introducing the concepts of weighted regret, online SP problems, and studying their implications when the decisions are restricted to be made in either non-anticipatory or 1-lookahead fashion. Our analyses demonstrate that when structural information such as smoothness or strong convexity of the loss functions is present, the additional flexibility introduced to the OCO framework by allowing weighted regret and/or 1-lookahead decisions can lead to significant improvements in the convergence rates. 
These then have immediate consequences on the convergence rates of iterative methods for solving RO problems studied in \cite{BenTalHazan2015,Ho-NguyenKK2016RO}; in particular Theorem~\ref{thm:OCO-strongly-convex} helps in partially resolving an open question from \cite{BenTalHazan2015} for the lower bound on the number of iterations/calls needed in these iterative frameworks for RO. Moreover, our results also have immediate application in the simultaneous JEO approach studied in \cite{JiangShanbhag2013,JiangShanbhag2014,AhmadiShanbhag2014}.   We establish that, in certain cases, our convergence rates for JEO, despite working with only estimates $u_t$ approximating the correct data   $u^*$, match the optimum lower bounds established for offline FOMs solving problems supplied with the correct data $u^*$.

There are a number of compelling avenues for future research. 
We believe our results may be further applicable to solve problems with uncertain data in the same spirit of Sections~\ref{sec:RO-app} and \ref{sec:JEO-app} and may open up possibilities for more principled solution  approaches in other application domains.   
An important extension of particular interest is the case where the learning problem \eqref{eqn:JEO-estimation-problem} in JEO is no longer static, but it dynamically evolves over time. 
Lower complexity bounds have been previously established for offline FOMs for problems over simple domains as well as some specific OCO problems. Nevertheless, the flexibilities we have introduced here point out that some of these lower bounds are no longer valid in the new setups (see Remarks~\ref{rem:stronglyConvDiscussion}~and~\ref{rem:smoothLBdiscuss}). Thus, establishing lower bounds matching our weighted regret (online SP gap) bounds in these setups are of interest. In particular, establishing the tightness of $O(1/T)$ bounds for weighted regret of strongly convex loss functions has a major consequence in determining the worst-case complexity of iterative approaches for solving RO problems.  
From a practical perspective,  
in certain applications and/or OCO contexts, it may be reasonable to assume that the players are not presented with exact feedback in the form of gradient/subgradient information but with only their unbiased estimates. Then deriving online stochastic iterative algorithms and studying the impact of several choices such as weighted regret, lookahead decisions, etc., on their behavior is  
of practical and theoretical interest. 
In this paper, we have worked under the assumption that our domain is convex; however both RO and JEO have many applications with nonconvex domains, e.g., involving discrete decision variables. A few online learning algorithms do not rely on such convexity assumption. It is appealing to study the implications of weighted regret and lookahead decisions for such algorithms and their potential use in solving online SP problems as well.

\section*{Acknowledgments}
The authors wish to thank the review team for their constructive feedback that improved the presentation of the material in this paper. 
This research is supported in part by NSF grant CMMI 1454548.

\bibliographystyle{abbrv}

\begin{thebibliography}{10}

\bibitem{AbernethyBartlett2008}
J.~Abernethy, P.~L. Bartlett, A.~Rakhlin, and A.~Tewari.
\newblock Optimal strategies and minimax lower bounds for online convex games.
\newblock In {\em Proceedings of the 19th Annual Conference on Computational
  Learning Theory}, 2008.

\bibitem{AhmadiShanbhag2014}
H.~Ahmadi and U.~V. Shanbhag.
\newblock Data-driven first-order methods for misspecified convex optimization
  problems: Global convergence and rate estimates.
\newblock In {\em 53rd IEEE Conference on Decision and Control}, pages
  4228--4233, Dec 2014.

\bibitem{AndrewBLLMRW2013}
L.~L. Andrew, S.~Barman, K.~Ligett, M.~Lin, A.~Meyerson, A.~Roytman, and
  A.~Wierman.
\newblock A tale of two metrics: Simultaneous bounds on competitiveness and
  regret.
\newblock {\em Journal of Machine Learning Research: Workshop and Conference
  Proceedings}, 30:741--763, 2013.

\bibitem{BenTalelGhaouiNemirovski2009}
A.~Ben-Tal, L.~Ghaoui, and A.~Nemirovski.
\newblock {\em Robust Optimization}.
\newblock Princeton Series in Applied Mathematics. Princeton University Press,
  2009.

\bibitem{BenTalHazan2015}
A.~Ben-Tal, E.~Hazan, T.~Koren, and S.~Mannor.
\newblock Oracle-based robust optimization via online learning.
\newblock {\em Operations Research}, 63(3):628--638, 2015.

\bibitem{BenTalNem1998}
A.~Ben-Tal and A.~Nemirovski.
\newblock Robust convex optimization.
\newblock {\em Mathematics of Operations Research}, 23(4):769--805, 1998.

\bibitem{BenTalNemirovski2002}
A.~Ben-Tal and A.~Nemirovski.
\newblock Robust optimization -- methodology and applications.
\newblock {\em Mathematical Programming}, 92(3):453--480, 2002.

\bibitem{BenTalNemirovski2008}
A.~Ben-Tal and A.~Nemirovski.
\newblock Selected topics in robust convex optimization.
\newblock {\em Mathematical Programming}, 112(1):125--158, 2008.

\bibitem{BertsimasBrownCaramanis2011}
D.~Bertsimas, D.~B. Brown, and C.~Caramanis.
\newblock Theory and applications of robust optimization.
\newblock {\em SIAM Review}, 53(3):464--501, 2011.

\bibitem{BorodinLinialSaks1992}
A.~Borodin, N.~Linial, and M.~E. Saks.
\newblock An optimal on-line algorithm for metrical task system.
\newblock {\em Journal of the ACM}, 39(4):745--763, Oct. 1992.

\bibitem{Bubeck2015}
S.~Bubeck.
\newblock Convex optimization: Algorithms and complexity.
\newblock {\em Foundations and Trends® in Machine Learning}, 8(3-4):231--357,
  2015.

\bibitem{BuchbinderCNO2012}
N.~Buchbinder, S.~Chen, J.~Naor, and O.~Shamir.
\newblock Unified algorithms for online learning and competitive analysis.
\newblock {\em Journal of Machine Learning Research: Workshop and Conference
  Proceedings}, 23:5.1--5.18, 2012.

\bibitem{CaramanisMannorXu2011}
C.~Caramanis, S.~Mannor, and H.~Xu.
\newblock Robust optimization in machine learning.
\newblock In S.~Sra, S.~Nowozin, and S.~Wright, editors, {\em Optimization for
  Machine Learning}. MIT Press, 2012.

\bibitem{CesaBianchiLugosi2006}
N.~Cesa-Bianchi and G.~Lugosi.
\newblock {\em Prediction, Learning, and Games}.
\newblock Cambridge University Press, 2006.

\bibitem{ChiangYLMLJZ2012}
C.-K. Chiang, T.~Yang, C.-J. Lee, M.~Mahdavi, C.-J. Lu, R.~Jin, and S.~Zhu.
\newblock Online optimization with gradual variations.
\newblock In {\em Conference on Learning Theory}, pages 6--1, 2012.

\bibitem{GoldfarbIyengar2003}
D.~Goldfarb and G.~Iyengar.
\newblock Robust portfolio selection problems.
\newblock {\em Mathematics of Operations Research}, 28(1):1--38, 2003.

\bibitem{Hazan2016}
E.~Hazan.
\newblock Introduction to online convex optimization.
\newblock {\em Foundations and Trends in Optimization}, 2(3-4):157--325, 2016.

\bibitem{HazanKale2014}
E.~Hazan and S.~Kale.
\newblock Beyond the regret minimization barrier: Optimal algorithms for
  stochastic strongly-convex optimization.
\newblock {\em Journal of Machine Learning Research}, 15:2489--2512, 2014.

\bibitem{Ho-NguyenKK2016RO}
N.~Ho-Nguyen and F.~K{\i}l{\i}n\c{c}-Karzan.
\newblock Online first-order framework for robust convex optimization.
\newblock {\em Operations Research} (to appear).

\bibitem{Jenatton_etal2016}
R.~Jenatton, J.~Huang, D.~Csiba, and C.~Archambeau.
\newblock Online optimization and regret guarantees for non-additive long-term
  constraints.
\newblock Technical report, February 2016.
\newblock \url{http://arxiv.org/abs/1602.05394}.

\bibitem{JiangShanbhag2013}
H.~Jiang and U.~V. Shanbhag.
\newblock On the solution of stochastic optimization problems in imperfect
  information regimes.
\newblock In {\em 2013 Winter Simulations Conference}, pages 821--832, Dec
  2013.

\bibitem{JiangShanbhag2014}
H.~Jiang and U.~V. Shanbhag.
\newblock On the solution of stochastic optimization and variational problems
  in imperfect information regimes.
\newblock {\em SIAM Journal on Optimization}, 26(4):2394--2429, 2016.

\bibitem{JuditNem2012Pt1}
A.~Juditsky and A.~Nemirovski.
\newblock First-order methods for nonsmooth convex large-scale optimization,
  {I}: General purpose methods.
\newblock In S.~Sra, S.~Nowozin, and S.~Wright, editors, {\em Optimization for
  Machine Learning}. MIT Press, 2012.

\bibitem{JuditNem2012Pt2}
A.~Juditsky and A.~Nemirovski.
\newblock First-order methods for nonsmooth convex large-scale optimization,
  {II}: Utilizing problem's structure.
\newblock In S.~Sra, S.~Nowozin, and S.~Wright, editors, {\em Optimization for
  Machine Learning}. MIT Press, 2012.

\bibitem{KoppelJakubiecRibeiro2015}
A.~Koppel, F.~Y. Jakubiec, and A.~Ribeiro.
\newblock A saddle point algorithm for networked online convex optimization.
\newblock {\em IEEE Transactions on Signal Processing}, 63(19):5149--5164, Oct
  2015.

\bibitem{LacosteJSchmidtBach2012}
S.~Lacoste{-}Julien, M.~W. Schmidt, and F.~R. Bach.
\newblock A simpler approach to obtaining an ${O}(1/t)$ convergence rate for
  the projected stochastic subgradient method.
\newblock Technical report, December 2012.
\newblock \url{http://arxiv.org/abs/1212.2002}.

\bibitem{MahdaviJinYang2012}
M.~Mahdavi, R.~Jin, and T.~Yang.
\newblock Trading regret for efficiency: online convex optimization with long
  term constraints.
\newblock {\em Journal of Machine Learning Research}, 13(Sep):2503--2528, 2012.

\bibitem{MutapcicBoyd2009}
A.~Mutapcic and S.~Boyd.
\newblock Cutting-set methods for robust convex optimization with pessimizing
  oracles.
\newblock {\em Optimization Methods and Software}, 24(3):381--406, June 2009.

\bibitem{NedicLee2014}
A.~Nedi\'{c} and S.~Lee.
\newblock On stochastic subgradient mirror-descent algorithm with weighted
  averaging.
\newblock {\em SIAM Journal on Optimization}, 24(1):84--107, 2014.

\bibitem{Nemirovski2005}
A.~Nemirovski.
\newblock Prox-method with rate of convergence {$O(1/t)$} for variational
  inequalities with lipschitz continuous monotone operators and smooth
  convex-concave saddle point problems.
\newblock {\em SIAM Journal on Optimization}, 15(1):229--251, 2004.

\bibitem{Nesterov2005}
Y.~Nesterov.
\newblock Smooth minimization of non-smooth functions.
\newblock {\em Mathematical Programming}, 103(1):127--152, 2005.

\bibitem{RakhlinShamirSridharan2012}
A.~Rakhlin, O.~Shamir, and K.~Sridharan.
\newblock Making gradient descent optimal for strongly convex stochastic
  optimization.
\newblock In J.~Langford and J.~Pineau, editors, {\em Proceedings of the 29th
  International Conference on Machine Learning (ICML-12)}, ICML '12, pages
  449--456, New York, NY, USA, July 2012. Omnipress.

\bibitem{RakhlinSridharan2013a}
A.~Rakhlin and K.~Sridharan.
\newblock Online learning with predictable sequences.
\newblock In {\em Conference on Learning Theory}, pages 993--1019, 2013.

\bibitem{RakhlinSridharan2013b}
A.~Rakhlin and K.~Sridharan.
\newblock Optimization, learning, and games with predictable sequences.
\newblock In {\em Advances in Neural Information Processing Systems}, pages
  3066--3074, 2013.

\bibitem{Robbins1950}
H.~Robbins.
\newblock Asymptotically subminimax solutions of compound statistical decision
  problems.
\newblock In {\em In Proceedings of the Second Berkeley Symposium on
  Mathematical Statistics and Probability}, page 131–149, 1950.

\bibitem{Shalev-Shwarz2011}
S.~Shalev-Shwartz.
\newblock Online learning and online convex optimization.
\newblock {\em Foundations and Trends in Machine Learning}, 4(2):107--194,
  2012.

\bibitem{Sion1958}
M.~Sion.
\newblock On general minimax theorems.
\newblock {\em Pacific Journal of Mathematics}, 8(1):171--176, 1958.

\bibitem{YangMahdaviJinZhou2012}
T.~Yang, M.~Mahdavi, R.~Jin, and S.~Zhu.
\newblock Regret bounded by gradual variation for online convex optimization.
\newblock {\em Machine Learning}, 95(2):183--223, 2014.

\bibitem{Zinkevich2003}
M.~Zinkevich.
\newblock Online convex programming and generalized infinitesimal gradient
  ascent.
\newblock In {\em Machine Learning, Proceedings of the Twentieth International
  Conference {(ICML} 2003), August 21-24, 2003, Washington, DC, {USA}}, pages
  928--936, 2003.

\end{thebibliography}

\end{document}